\documentclass[10pt]{article}
\usepackage[utf8]{inputenc}

\usepackage[margin=1in]{geometry} 
\geometry{a4paper}
\usepackage{graphicx}

\usepackage{booktabs} 
\usepackage{array} 
\usepackage{paralist} 
\usepackage{verbatim}
\usepackage{mathrsfs}
\usepackage{amssymb}
\usepackage{amsthm}
\usepackage{amsmath,amsfonts,amssymb}
\usepackage{esint}
\usepackage{graphics}
\usepackage{enumerate}
\usepackage{mathtools}
\usepackage{xfrac}
\usepackage{bbm}
\usepackage{caption}
\usepackage{subcaption}
\usepackage{tikz-cd}
\usepackage{adjustbox}

\usepackage[colorlinks=true, pdfstartview=FitV, linkcolor=blue, citecolor=blue, urlcolor=blue]{hyperref}

\numberwithin{equation}{section}
\numberwithin{figure}{section}

\newtheorem{theorem}{Theorem}[section]

\newtheorem{proposition}[theorem]{Proposition}
\newtheorem{lemma}[theorem]{Lemma}
\theoremstyle{definition}
\newtheorem{definition}[theorem]{Definition}

\newtheorem{remark}[theorem]{Remark}

\newcommand*{\N}{\ensuremath{\mathbb{N}}}

\newcommand*{\Z}{\ensuremath{\mathbb{Z}}}

\newcommand*{\R}{\ensuremath{\mathbb{R}}}

\renewcommand*{\tilde}{\widetilde}

\newcommand{\E}{\mathbb{E}}

\DeclareSymbolFont{boldoperators}{OT1}{cmr}{bx}{n}
\SetSymbolFont{boldoperators}{bold}{OT1}{cmr}{bx}{n}
\edef\bar{\unexpanded{\protect\mathaccentV{bar}}\number\symboldoperators16}

\definecolor{labelkey}{rgb}{0,0,1}

\usepackage{titlesec}

\newcommand{\addperiod}[1]{#1.}
\titleformat{\section}
   {\centering\normalfont\Large}{\thesection.}{0.5em}{}
\titleformat*{\subsection}{\bfseries}
\titleformat{\subsubsection}[runin]
  {\normalfont\bfseries}
  {\thesubsubsection.}
  {0.5em}
  {\addperiod}
\titleformat*{\subsubsection}{\bfseries}
\titleformat*{\paragraph}{\bfseries}
\titleformat*{\subparagraph}{\large\bfseries}

\title{Quantitative delocalisation\\ for the Gaussian and $q$-SOS long-range chains}

\author{Loren Coquille\footnote{Universit\'e Grenoble Alpes, CNRS, Institut Fourier, F-38000 Grenoble, France; loren.coquille@univ-grenoble-alpes.fr}, 
	Paul Dario\footnote{Laboratoire AGM, CNRS UMR 8088, Cergy Paris Universit\'e, Cergy-Pontoise, France; paul.dario@cyu.fr},
	Arnaud Le Ny\footnote{Laboratoire d'Analyse et de Mathématiques Appliquées,  UMR CNRS 8050, UPEC, 61 Avenue du G\'en\'eral de Gaulle,  94010 Cr\'eteil cedex, France; arnaud.le-ny@u-pec.fr}
}

\date{ \today}

\usepackage[nottoc,notlot,notlof]{tocbibind}

\begin{document}

\maketitle

\begin{abstract}
The goal of this article is to study quantitatively the localisation/delocalisation properties of the integer-valued Gaussian chain with long-range interactions. Specifically, we consider the integer-valued Gaussian chain of length $N$, with Dirichlet boundary condition, range exponent $\alpha \in (1 , \infty)$ and inverse temperature $\beta \in (0,\infty)$, and show that:
\begin{itemize}
    \item For $\alpha \in (2 ,3)$ and $\beta \in (0 , \infty)$, the fluctuations of the chain are at least of order $N^{\frac{1}{2}(\alpha - 2)}$;
    \item For $\alpha = 3$ and $\beta \in (0 , \infty)$, the fluctuations of the chain are of order $\sqrt{N / \ln N}$ (sharp upper and lower bounds up to multiplicative constants are derived).
\end{itemize}
Combined with the results of Kjaer-Hilhorst~\cite{KH82}, Fr\"{o}hlich-Zegarlinski~\cite{frohlich1991phase} and Garban~\cite{garban2023invisibility}, these estimates provide an (almost) complete picture for the localisation/delocalisation of the integer-valued Gaussian chain. The proofs are based on graph surgery techniques which have been recently developed by van Engelenburg-Lis~\cite{van2023elementary, van2023duality} and Aizenman-Harel-Peled-Shapiro~\cite{aizenman2021depinning} to study the phase transitions of two dimensional integer-valued height functions (and of their dual spin systems).

Additionally, by combining the previous strategy with a technique introduced by Sellke~\cite{sellke2024localization}, we are able extend the method to study the $q$-SOS long-range chain with exponent $q \in (0 , 2)$ and show that, for any inverse temperature $\beta\in (0, \infty)$ and any range exponent $\alpha \in (1 , \infty)$:
\begin{itemize}
    \item The fluctuations of the chain are at least of order $N^{\frac{1}{q}(\alpha -2) \wedge \frac{1}{2}}$;
    \item The fluctuations of the chain are at most of order $N^{\left( \frac{1}{q}\alpha - 1 \right) \wedge \frac 12}$.
\end{itemize}
\end{abstract}

\bigskip

\textbf{AMS 2000 subject classification:} 60K35, 82B20, 82B26\vspace{0.3cm}

{\bf Key words:}  Gibbs measures, integer-valued Gaussian free field, long-range interactions.

\newpage
\setcounter{tocdepth}{3}
\tableofcontents
\newpage

\section{Introduction}

\subsection{Setting and main results} \label{sec:sec1.2intro}

\subsubsection{integer-valued Gaussian long-range chain} \label{sec:introDGC}

The integer-valued Gaussian long-range chain is a model of discrete interfaces in one dimension with long-range interactions. Formally, it is a model of random interfaces (or height functions) where the interfaces are modelled by functions $\varphi : \Z \to \Z$ which are sampled according to the (formal) Gibbs measure
\begin{equation*}
    \mu(\{ \varphi \}) :=\frac{1}{Z} \exp \left( - \beta \sum_{i , j \in \Z} \frac{\left| \varphi(i) - \varphi(j) \right|^2}{|i - j|^\alpha} \right),
\end{equation*}
where $\beta \in (0 , \infty)$ is the inverse temperature and $\alpha \in (1 , \infty)$ is the range exponent. Following this formalism, we will frequently refer to the value $\varphi (v)$ as the height of the interface $\varphi : \Z \to \Z$ at the vertex $v \in \Z$.

To be more rigorous, we introduce the finite-volume version of the model with Dirichlet boundary condition studied in this manuscript.

\begin{definition}[Finite-volume integer-valued Gaussian long-range chain] \label{def.def1.1}
For $N \in \N$, $\beta \in (0 , \infty)$ and $\alpha \in (1 , \infty)$, we define the integer-valued Gaussian long-range chain of length $N$ at inverse temperature $\beta$ and with range exponent $\alpha$ to be the probability distribution on the set of functions 
$$\Omega_N := \left\{ \varphi : \Z \to \Z \, : \, \forall \, k \notin \{ -N , \ldots ,N\}, \, \varphi(k) = 0 \right\}$$
given be the formula, for any $\varphi \in \Omega_N,$
\begin{equation} \label{def.IVGFFlongrange}
    \mu_{N , \beta , \alpha}(\{ \varphi \})  = \frac{1}{Z_{N , \beta , \alpha}} \exp \left( - \beta \sum_{i , j \in \Z} \frac{\left| \varphi(i) - \varphi(j) \right|^2}{|i - j|^\alpha} \right),
\end{equation}
where $Z_{N , \beta , \alpha}$ is the normalizing constant chosen so that $\mu_{N , \beta , \alpha}$ is a probability distribution. We denote by $\mathrm{Var}_{N , \beta , \alpha}$ the variance with respect to $\mu_{N , \beta , \alpha}$.
\end{definition}

In this article, we study the localisation/delocalisation properties of the integer-valued Gaussian chain, that is, we investigate the variance of the height of the interface at the vertex $0$ (i.e., the variance of $\varphi(0)$) as a function of the parameters $N$, $\beta$ and $\alpha$. For fixed $\alpha$ and $\beta$, we say that the random interface is localised when this variance remains bounded as $N$ tends to infinity, and we say that the interface is delocalised when it diverges as $N$ tends to infinity (N.B. it can be shown that this variance is increasing in $N$ so only one of these two behaviours can occur, see Remark~\ref{rem:remarkmonotony}).

The main result identifies the rate of growth (or absence of growth) of this variance as a function of the length of the chain and is stated below.

\begin{theorem}[Localisation/Delocalisation for the Gaussian long-range chain] \label{thm.gaussian}
    For any inverse temperature $\beta \in (0 , \infty)$ and any range exponent $\alpha \in (1 , \infty)$, there exist two constants $c_\beta := c(\beta , \alpha) > 0$ and $C_\beta := C(\beta, \alpha) < \infty$ such that, for any $N \in \N$,
    \begin{equation*}
        \begin{aligned}
            c_\beta \leq \mathrm{Var}_{N , \beta , \alpha} \left[ \varphi(0) \right] & \leq C_\beta &~~ \mbox{for} ~~& \alpha \in (1 , 2) \\
           c_\beta \ln N \leq \mathrm{Var}_{N , \beta , \alpha} \left[ \varphi(0) \right] & \leq C_\beta \ln N &~~ \mbox{for} ~~& \alpha = 2 ~~\mbox{and}~~ \beta \ll 1\\
            c_\beta N^{\alpha - 2} \leq \mathrm{Var}_{N , \beta , \alpha} \left[ \varphi(0) \right] & \leq C_\beta N^{\alpha - 2} &~~ \mbox{for} ~~& \alpha \in (2 , 3) \\
            c_\beta N/\ln N \leq \mathrm{Var}_{N , \beta , \alpha} \left[ \varphi(0) \right] & \leq C_\beta N / \ln N &~~ \mbox{for} ~~ &\alpha = 3 \\
            c_\beta N \leq \mathrm{Var}_{N , \beta , \alpha} \left[ \varphi(0) \right] & \leq C_\beta N &~~ \mbox{for} ~~ &\alpha >  3 .
        \end{aligned}
    \end{equation*}
\end{theorem}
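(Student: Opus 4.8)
\emph{Proof strategy.} I would prove the matching upper and lower bounds separately, reducing as far as possible to the real-valued version of the model (an explicit Gaussian vector) and isolating the genuinely ``integer-valued'' content, where graph surgery enters.

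\textbf{Upper bounds.} Let $\mathrm{Var}^{\mathbb R}_{N,\beta,\alpha}[\varphi(0)]$ be the variance of $\varphi(0)$ for the real-valued chain, obtained by allowing $\varphi\colon\mathbb Z\to\mathbb R$ in Definition~\ref{def.def1.1}; this is a centred Gaussian with covariance proportional to $\mathbf{M}^{-1}$, where $\mathbf{M}$ is the Dirichlet quadratic form of the kernel $|i-j|^{-\alpha}$ on $\{-N,\dots,N\}$. First, the Ginibre-type inequality $\mathrm{Var}_{N,\beta,\alpha}[\varphi(0)]\le\mathrm{Var}^{\mathbb R}_{N,\beta,\alpha}[\varphi(0)]$: writing the counting measure on $\mathbb Z$ via Poisson summation as $\sum_{k\in\mathbb Z}e^{2\pi ikx}$, applying it at each site and completing the square, one gets $\mathrm{Var}_{N,\beta,\alpha}[\varphi(0)]=\mathrm{Var}^{\mathbb R}_{N,\beta,\alpha}[\varphi(0)]-(\text{a manifestly non-negative term})$, the latter being the variance of a linear statistic under an auxiliary Coulomb gas with positive weights. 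Second, estimate $\mathrm{Var}^{\mathbb R}_{N,\beta,\alpha}[\varphi(0)]$ by comparing the finite-volume Dirichlet Green's function with the infinite-volume one: the Fourier symbol $g(\theta):=\sum_{k\ne0}|k|^{-\alpha}(1-\cos k\theta)$ satisfies $g(\theta)\asymp|\theta|^{\alpha-1}$ for $\alpha\in(1,3)$, $g(\theta)\asymp|\theta|^{2}\log(1/|\theta|)$ for $\alpha=3$ and $g(\theta)\asymp|\theta|^{2}$ for $\alpha>3$, whence $\mathrm{Var}^{\mathbb R}_{N,\beta,\alpha}[\varphi(0)]\asymp\beta^{-1}\int_{1/N}^{\pi}g(\theta)^{-1}\,d\theta$; evaluating this integral in the five regimes gives all the upper bounds and identifies the target rate $V_N$ (namely $1$, $\log N$, $N^{\alpha-2}$, $N/\log N$, $N$).

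\textbf{Lower bounds.} Monotonicity in $N$ (Remark~\ref{rem:remarkmonotony}) and $\mathrm{Var}_{1,\beta,\alpha}[\varphi(0)]>0$ give $\mathrm{Var}_{N,\beta,\alpha}[\varphi(0)]\ge c_\beta$, which settles $\alpha\in(1,2)$ and is the base case elsewhere. For $\alpha\ge2$ one must show the integer constraint costs only a constant factor, via the graph-surgery method of \cite{van2023elementary,van2023duality,aizenman2021depinning}. I would organise this around a one-step increment inequality $\mathrm{Var}_{2L,\beta,\alpha}[\varphi(0)]\ge\mathrm{Var}_{L,\beta,\alpha}[\varphi(0)]+c_\beta(V_{2L}-V_L)$, which telescopes over dyadic scales to $\mathrm{Var}_{N,\beta,\alpha}[\varphi(0)]\ge c_\beta V_N$. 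To prove it, condition the length-$2L$ chain on the heights outside the block $\{-L,\dots,L\}$; expanding the bonds crossing the two cuts, the conditional law on the block is the length-$L$ chain with $0$ boundary condition tilted by an external field $h_i=\sum_{L<|j|\le2L}|i-j|^{-\alpha}\varphi_j$, and the conditional-variance decomposition splits $\mathrm{Var}_{2L}[\varphi(0)]$ into the averaged conditional variance plus the variance of the conditional mean. For the real chain this is an exact identity equal to $\mathrm{Var}^{\mathbb R}_L[\varphi(0)]+(V_{2L}-V_L)$ (the conditional variance of a Gaussian is insensitive to the linear term); for the $\mathbb Z$-valued chain the surgery must supply (a) that $h$ is typically weak — its size at the centre of the block decays polynomially in $L$, since the heights in the annulus $\{L<|j|\le2L\}$, though strongly correlated, are only of size $\sqrt{V_L}$ while divided by $|i-j|^{-\alpha}\asymp L^{-\alpha}$ — so that the averaged conditional variance is at least $(1-o(1))\mathrm{Var}^{\mathbb Z}_L[\varphi(0)]$, and (b) a \emph{quantitative} lower bound on the variance of the conditional mean, equivalently on $\mathrm{Cov}^{\mathbb Z}_{2L}(\varphi_j,\varphi_{j'})$ across the annulus, showing it is $\gtrsim V_L$ on a positive fraction of pairs. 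The marginal exponents are exactly where the logarithmic factors appear: $\sum_j|j|^{2-\alpha}$ diverges logarithmically for $\alpha=2$ and converges with the divergence one derivative up for $\alpha=3$, producing the $\log N$ and $N/\log N$ rates, which forces one to track logarithms through all surgery estimates; at $\alpha=2$ the increment is only of constant size and is non-negative only for $\beta\ll1$. The regimes $\alpha\in(1,2)$, $\alpha=2$ and $\alpha>3$ can alternatively be recovered from \cite{KH82,frohlich1991phase,garban2023invisibility} combined with the comparison above; the genuinely new input is the sharp behaviour throughout $\alpha\in(2,3]$.

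\textbf{Main obstacle.} The crux is the absence of a Markov property: conditioning on the complement of a block does not decouple it, so the surgery must quantify the effect of \emph{all} bonds crossing a cut and show it is of lower order than the target at every scale, uniformly in $\beta$. Making this quantitative enough to pin the precise rate $V_N$ — in particular proving the correlation estimate (b), not merely $\mathrm{Var}_N\to\infty$ — is the delicate point where the van Engelenburg--Lis / Aizenman--Harel--Peled--Shapiro surgery is indispensable. A secondary difficulty is that the available $\mathbb Z$-valued deformations are integer ``staircases'' rather than smooth bumps: a single shift by a staircase yields only $\mathrm{Var}_N\ge c_\beta$, so the placement of the unit steps must be optimised against the long-range energy, which a naive energy--entropy argument cannot see.
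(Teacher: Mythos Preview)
Your upper-bound approach is correct and genuinely different from the paper's. The inequality $\mathrm{Var}_{N,\beta,\alpha}[\varphi(0)]\le\mathrm{Var}^{\mathbb R}_{N,\beta,\alpha}[\varphi(0)]$ does follow from Gaussian domination (the paper cites this as the Ginibre-type inequality~\cite{frohlich1978correlation,kharash2017fr}, see also Lemma~\ref{lemma:lemma2.2}), and the Fourier computation of the real-valued Green's function is standard. The paper instead proves all upper bounds by graph surgery (B\"aumler's path construction for $\alpha<3$, a multiscale path argument at $\alpha=3$, edge deletion for $\alpha>3$), never invoking the real-valued model. Your route is shorter for the Gaussian chain; the paper's route has the advantage that it extends verbatim to the $q$-SOS chain, where no Gaussian-domination shortcut is available.

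Your lower-bound approach, however, has a genuine gap. The paper does \emph{not} proceed by a dyadic increment inequality or by conditioning on the complement of a block. Instead it performs a single explicit transformation of the long-range graph into a nearest-neighbour chain, using only three monotone operations: (i) splitting a long-range edge $ij$ into $n$ series edges of conductance $n\lambda_{ij}$ \emph{decreases} the variance (Proposition~\ref{corollary:splittingintoNparts}), (ii) identifying two vertices decreases the variance (Proposition~\ref{corollary.identification}), (iii) deleting an edge increases it. For $\alpha>3$ every edge $ij$ is split into $|i-j|$ pieces and the new vertices are identified with the integer points between $i$ and $j$; the resulting nearest-neighbour conductances sum to $\sum_{j\le i}\sum_{j'\ge i+1}|j-j'|^{1-\alpha}<\infty$, giving a chain of length $N$ with bounded conductances. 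For $\alpha\in(2,3)$ one first coarsens into blocks of length $N^{3-\alpha}$, splits long edges only at block boundaries, then collapses each block to a point, producing a chain of length $\asymp N^{\alpha-2}$ with bounded conductances. For $\alpha=3$ the block size is $\ln N$. For $\alpha=2$ the surgery embeds the chain into a 2D box and reduces to the Fr\"ohlich--Spencer delocalisation of the 2D integer-valued GFF.

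By contrast, your scheme requires two inputs you do not supply. Step~(a) --- that the averaged conditional variance with random boundary data is $(1-o(1))\mathrm{Var}_L$ --- is false as stated without further argument: for integer-valued fields the variance \emph{does} depend on the boundary condition (a discrete Gaussian centred off an integer has strictly smaller variance than one centred at $0$), so ``$h$ is small'' does not automatically give ``conditional variance is unchanged''. Step~(b) --- a lower bound on $\mathrm{Cov}_{2L}(\varphi_j,\varphi_{j'})$ across the annulus --- is essentially the delocalisation statement you are trying to prove, and you invoke surgery only by name without specifying which operations would yield it. The paper's one-shot graph reduction sidesteps both issues entirely: no conditioning, no induction, no covariance bounds are needed, because the variance on the final nearest-neighbour chain is computed exactly via~\eqref{eq:varianceforthechain}.
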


\begin{remark} \label{rem:remark1.3}
Let us make a few remarks about the previous result:
\begin{itemize}
    \item The above value of the fluctuation exponent in the regime $\alpha\in(1, \infty) \setminus \{ 2 \}$ matches the one conjectured by Fröhlich and Zegarlinski in the early nineties \cite{frohlich1991phase}.
    \item Some of the results of Theorem \ref{thm.gaussian} have been established the articles~\cite{KH82, frohlich1991phase, garban2023invisibility, coquille2024absence} and we refer to Section~\ref{sec:introoldandnew} for a table summarising the various contributions on the localisation/delocalisation of the integer-valued Gaussian chain.
    As all these results can be proved using the same set of tools, i.e., the graph
    surgery techniques introduced in Section~\ref{sec:secprelim}, we include in this article a proof of all of them, previously known or not.
    \item In the case of the range exponent $\alpha = 2$ at low temperature ($\beta \gg 1$), Fr\"{o}hlich and Zegarlinski~\cite{frohlich1991phase} used a Peierls-type argument to prove that the integer-valued Gaussian chain is localised (i.e., the variance of the height is bounded as a function of $N$). This case is not covered by Theorem~\ref{thm.gaussian} and we mention that it would be interesting to find an alternative proof of the localisation of the integer-valued Gaussian chain at low temperature based on graph surgery techniques.
    \item An explicit dependence of the constants in the inverse temperature $\beta$ can be deduced from our proofs. Specifically, it can be shown that there exist two constants $c := c(\alpha) > 0$ and $C := C(\alpha) < \infty$ (depending only on $\alpha$) such that the following lower bounds hold for the constant~$c_\beta$
    \begin{center}
    \begin{tabular}{llllll}
		\hline
			\multicolumn{1}{|l|}{Lower bound on $c_\beta$} & \multicolumn{1}{l|}{$\beta \leq 1$} & \multicolumn{1}{l|}{$\beta \geq 1$} \\ \hline 
            \multicolumn{1}{|l|}{$\alpha \in (1 ,2) $} & \multicolumn{1}{l|}{$c / \beta$} & \multicolumn{1}{l|}{$c e^{- C \beta}$} \\  \hline
            \multicolumn{1}{|l|}{$\alpha = 2$} & \multicolumn{1}{l|}{$c / \beta$} & \multicolumn{1}{l|}{N.A.} \\  \hline
            \multicolumn{1}{|l|}{$\alpha \in (2 , \infty)$} & \multicolumn{1}{l|}{$c / \beta$} & \multicolumn{1}{l|}{$c e^{- C \beta}$} \\  \hline
	\end{tabular}
    \end{center}
    Regarding the constant $C_\beta$, the following upper bounds can be deduced from the proof
    \begin{center}
    \begin{tabular}{llllll}
		\hline
			\multicolumn{1}{|l|}{Upper bound on $C_\beta$} & \multicolumn{1}{l|}{$\beta \leq 1$} & \multicolumn{1}{l|}{$\beta \geq 1$} \\ \hline     
            \multicolumn{1}{|l|}{$\alpha \in (1 ,2)$} & \multicolumn{1}{l|}{$C / \beta$} & \multicolumn{1}{l|}{$C e^{-  c \beta}$} \\  \hline
            \multicolumn{1}{|l|}{$\alpha =  2$} & \multicolumn{1}{l|}{$C / \beta$} & \multicolumn{1}{l|}{N.A.} \\  \hline
            \multicolumn{1}{|l|}{$\alpha \in (2 , 3]$} & \multicolumn{1}{l|}{$C / \beta$} & \multicolumn{1}{l|}{$C / \beta$} \\  \hline
            \multicolumn{1}{|l|}{$\alpha \in (3 , \infty)$} & \multicolumn{1}{l|}{$C / \beta$} & \multicolumn{1}{l|}{$C e^{- c \beta}$} \\  \hline
	\end{tabular}
    \end{center}
        These upper and lower bounds display the correct dependence in the inverse temperature $\beta$ except for the lower bound in the case $\alpha \in (2 , 3]$ and $\beta \gg 1$, where it is conjectured that it should behave like the inverse of $\beta$ (see~\cite[Open question 3]{garban2023invisibility}). We mention that the question of the identification of the correct dependence in the inverse temperature $\beta$ is related to (and in fact strictly simpler than) the question of the \emph{invisibility of the integers} for the integer-valued Gaussian chain in the case $\alpha \in (2 , 3]$ and $\beta \gg 1$ discussed in the paper of Garban~\cite[Open question 3]{garban2023invisibility} (and we refer to this article for much more information and results in this direction).
        \item All the results stated in Theorem~\ref{thm.gaussian} (as well as their proofs) apply to the real-valued Gaussian long-range chain (obtained by assuming that the interfaces are real-valued and sampled according to the distribution whose density with respect to the Lebesgue measure is given by~\eqref{def.IVGFFlongrange}). In this case, the results can be obtained using alternative methods (since the random interface is a multivariate normal distribution, it can be directly studied through its covariance matrix).
\end{itemize}
\end{remark}

\subsubsection{Summary of known and new results for the integer-valued Gaussian chain} \label{sec:introoldandnew}

Below is a table describing the behaviour of the integer-valued Gaussian chain depending on the inverse temperature $\beta$ and the range exponent $\alpha$.
The new results are displayed in bold and the alternative proofs of known results are displayed in blue.

\bigskip

\hspace{-1cm}
\begin{tabular}{llllll}
	\hline
	\multicolumn{1}{|l|}{\textsc{}} & 	\multicolumn{1}{|l|}{\textsc{Lower bounds}}& \multicolumn{1}{l|}{\textsc{Quantitatively}} & \multicolumn{1}{l|}{\textsc{Qualitatively}}  \\ \hline              
	\multicolumn{1}{|l|}{$\alpha=2, \beta \ll 1$} & \multicolumn{1}{|l|}{$c_\beta\log N$ } & \multicolumn{1}{l|}{ Kjaer-Hilhorst~\cite{KH82}, Garban~\cite{garban2023invisibility} + \textbf{\textcolor{blue}{Proposition~\ref{prop:prop4.1}}}} & \multicolumn{1}{l|}{Deloc $\beta \ll1$} \\ \hline
	\multicolumn{1}{|l|}{$\alpha\in(2,3)$} & \multicolumn{1}{|l|}{$c_\beta N^{\alpha-2}$} &\multicolumn{1}{l|}{ Garban~\cite{garban2023invisibility} for $\beta$ small + \textbf{Theorem~\ref{thm.gaussian} for all} $\beta$} & \multicolumn{1}{l|}{ Deloc for all $\beta$~\cite{coquille2024absence}}\\ \hline
	\multicolumn{1}{|l|}{$\alpha=3$} &\multicolumn{1}{|l|}{$c_\beta N/\log N$} & \multicolumn{1}{l|}{ Garban~\cite{garban2023invisibility} for $\beta$ small + \textbf{Theorem~\ref{thm.gaussian} for all} $\beta$} & \multicolumn{1}{l|}{Deloc for all $\beta$~\cite{coquille2024absence}} \\ \hline        
	\multicolumn{1}{|l|}{$\alpha>3$} &\multicolumn{1}{|l|}{$c_\beta N$} & \multicolumn{1}{l|}{ Garban~\cite{garban2023invisibility}  for all $\beta$+ \textbf{\textcolor{blue}{Section~\ref{sec:lowerboundGaussianalpha>3}}}} & \multicolumn{1}{l|}{Deloc for all $\beta$~\cite{coquille2024absence}}  \\ \hline                 
\end{tabular}

\bigskip

\bigskip
\hspace{-1cm}
\begin{tabular}{llllll}
	\hline
	\multicolumn{1}{|l|}{} & \multicolumn{1}{|l|}{\textsc{Upper bounds}}& \multicolumn{1}{l|}{\textsc{Quantitatively}} & \multicolumn{1}{l|}{\textsc{Qualitatively}}  \\ \hline              
	\multicolumn{1}{|l|}{$\alpha\in(1,2)$} & \multicolumn{1}{|l|}{$C_\beta$}&\multicolumn{1}{l|}{ Garban~\cite{garban2023invisibility} for all $\beta$ + \textbf{\textcolor{blue}{Section~\ref{sec.Baumlertechnique}}}} & \multicolumn{1}{l|}{Localised for all $\beta$} \\ \hline
	\multicolumn{1}{|l|}{$\alpha=2, \beta \gg 1$} &\multicolumn{1}{|l|}{$C_\beta$}& \multicolumn{1}{l|}{ Fr\"{o}hlich-Zegarlinski~\cite{frohlich1991phase}}& \multicolumn{1}{l|}{Localised $\beta\gg1$} \\ \hline
	\multicolumn{1}{|l|}{$\alpha=2, \beta \ll 1$} & \multicolumn{1}{|l|}{$C_\beta\log N$}&\multicolumn{1}{l|}{  Garban~\cite{garban2023invisibility} + \textbf{\textcolor{blue}{Section~\ref{sec.Baumlertechnique}}}}& \multicolumn{1}{l|}{Delocalised  $\beta\ll1$} \\ \hline
	\multicolumn{1}{|l|}{$\alpha \in (2,3)$} &\multicolumn{1}{|l|}{$C_\beta N^{\alpha-2}$}& \multicolumn{1}{l|}{ Garban~\cite{garban2023invisibility} for all $\beta$ + \textbf{\textcolor{blue}{Section~\ref{sec.Baumlertechnique}}}} & \multicolumn{1}{l|}{Delocalised for all $\beta$}\\ \hline 
	\multicolumn{1}{|l|}{$\alpha =3$} & \multicolumn{1}{|l|}{$C_\beta N/\log N$}&\multicolumn{1}{l|}{\textbf{Section~\ref{sec:casealpha=3}}} & \multicolumn{1}{l|}{Delocalised for all $\beta$}\\ \hline          
	\multicolumn{1}{|l|}{$\alpha > 3$} &\multicolumn{1}{|l|}{$C_\beta N$}& \multicolumn{1}{l|}{ Garban~\cite{garban2023invisibility} for all $\beta$ + \textbf{\textcolor{blue}{Section~\ref{sec:casealpha>3}}}} & \multicolumn{1}{l|}{Delocalised for all $\beta$}\\ \hline    
\end{tabular}

\subsubsection{Discrete $q$-SOS long-range chain} \label{sec:introqSOSC}

In order to go beyond the integer-valued Gaussian chain, it is interesting to study other potentials than the square function in Definition~\ref{def.def1.1}. In Section~\ref{sec.section4qSOS}, we consider the $q$-SOS long-range chain with $q \in (0,2)$ (which is obtained by replacing the square function in~\eqref{def.IVGFFlongrange} by the function $x \mapsto |x|^q$) and show that the fluctuations of this model are different from that of the Gaussian chain. The precise definition of the model and the results obtained in Section~\ref{sec.section4qSOS} are stated below, and summarised on Figure \ref{fig:alpha-q-plane}.

\begin{definition}[Finite-volume discrete $q$-SOS long-range chain]
For $N \in \N$, $q \in (0,2)$, $\beta \in (0 , \infty)$ and $\alpha \in (1 , \infty)$, we define the discrete $q$-SOS long-range chain of length $N$ at inverse temperature $\beta$ and with range exponent $\alpha$ to be the probability distribution on the set $\Omega_N$ given be the identity, for any $\varphi \in \Omega_N,$
\begin{equation*}
    \mu_{N , \beta , \alpha}^{q-\mathrm{SOS}}(\{ \varphi \})  = \frac{1}{Z_{N , \beta , \alpha}^{q-\mathrm{SOS}}} \exp \left( - \beta \sum_{i , j \in \Z} \frac{\left| \varphi(i) - \varphi(j) \right|^q}{|i - j|^\alpha} \right),
\end{equation*}
where $Z_{N , \beta , \alpha}^{q-\mathrm{SOS}}$ is the normalizing constant. We denote by $\mathrm{Var}_{N , \beta , \alpha}^{q-\mathrm{SOS}}$ the variance with respect to $\mu_{N , \beta , \alpha}^{q-\mathrm{SOS}}$.
\end{definition}

\begin{theorem}[Delocalisation for the $q$-SOS long-range chain] \label{thm.qSOS}
	For any $\beta \in (0 , \infty)$, any range exponent $\alpha \in (1 , \infty)$, and any $q\in(0,2)$ there exists a constant $c_\beta := c(\beta , \alpha, q) > 0$ such that, for any $N \in \N$,
	\begin{equation*}
	\begin{aligned}
    c_\beta  \leq \mathrm{Var}_{N , \beta , \alpha}^{q-\mathrm{SOS}} \left[ \varphi(0) \right]  ~~~  &~~ \mbox{for} ~& \alpha &\in \left( 1, 2 \right], \\
	c_\beta N^{\frac{2}{q}(\alpha - 2)} \leq \mathrm{Var}_{N , \beta , \alpha}^{q-\mathrm{SOS}} \left[ \varphi(0) \right] ~~~  &~~ \mbox{for} ~& \alpha &\in \left( 2 , 2 + \frac{q}{2} \right), \\
	 c_\beta \frac{N}{(\ln N)^{2/q}}\leq \mathrm{Var}_{N , \beta , \alpha}^{q-\mathrm{SOS}} \left[ \varphi(0) \right] ~~~ &~~ \mbox{for} ~ &\alpha &= 2 + \frac{q}{2}, \\
	 c_\beta N \leq \mathrm{Var}_{N , \beta , \alpha}^{q-\mathrm{SOS}} \left[ \varphi(0) \right] ~~~ &~~ \mbox{for} ~ &\alpha &> 2 + \frac{q}{2} .
	\end{aligned}
	\end{equation*}
    Regarding the upper bound, there exists a constant $C_\beta := C(\beta , \alpha, q) < \infty$ such that
    \begin{equation*}
        \mathrm{Var}_{N , \beta , \alpha}^{q-\mathrm{SOS}} \left[ \varphi(0) \right] \leq \left\{ \begin{aligned}
            C_\beta & ~\mbox{for} ~ \alpha < q, \\
            C_\beta \ln N & ~\mbox{for} ~ \alpha = q, \\
            C_\beta N^{\frac{2 \alpha}{q} - 2} & ~\mbox{for} ~ \alpha > q.
        \end{aligned} \right.
    \end{equation*}
\end{theorem}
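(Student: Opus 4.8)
The plan is to prove Theorem~\ref{thm.qSOS} by splitting it into the delocalisation (lower) bounds and the localisation (upper) bounds, and in both cases importing the graph-surgery machinery of Section~\ref{sec:secprelim} — which already yields Theorem~\ref{thm.gaussian} for the quadratic potential — and transporting it to the potential $x\mapsto|x|^q$. The Gaussian case relies at one point on identities special to the quadratic weight (and, for the upper bounds, on monotonicity/FKG-type inputs), and the role of Sellke's technique~\cite{sellke2024localization} is precisely to replace these by soft comparison arguments that survive the loss of convexity when $q<1$: I will use it to re-run the surgery robustly for the lower bounds, and to compare the $q$-SOS chain with an auxiliary Gaussian chain for the upper bounds.

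\emph{Lower bounds.} For $\alpha\in(1,2]$ only a constant is claimed, and it follows from a direct shift: the map $\varphi\mapsto\varphi+t\,\indc_{\{0,\dots,N\}}$ is a bijection of $\Omega_N$ sending $\{\varphi(0)=a\}$ onto $\{\varphi(0)=a+t\}$ which, as only bonds straddling $\{0,\dots,N\}$ change energy, raises the energy by at most $C_\alpha\beta t^q$ — using $|\,|a+b|^q-|a|^q\,|\le|b|^q$ for $q\le1$, and for $q\in(1,2)$ after controlling the exponential moments of $\sum_{i,j}|i-j|^{-\alpha}|\varphi(i)-\varphi(j)|^{q-1}$. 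Hence $\mu^{q-\mathrm{SOS}}_{N}(\varphi(0)=a)\ge e^{-C\beta|a|^q}\mu^{q-\mathrm{SOS}}_{N}(\varphi(0)=0)$, and a short Chebyshev argument gives $\mathrm{Var}^{q-\mathrm{SOS}}_{N}[\varphi(0)]\ge c_\beta$. For $\alpha>2$ this crude estimate never beats a constant (the cheaply available modifications cost $\asymp t^q$ in energy, and $e^{-\beta t^q}$ defeats any polynomial gain), so here the surgery is essential: instead of perturbing a typical configuration by a fixed bump, one resamples the interface on a block conditionally on $\varphi(0)$ being large and controls the resulting \emph{free-energy} cost, which is governed by the ``smooth'' long-range energy rather than by the full integer one — and Sellke's technique provides a way to carry out this resampling without the FKG inequality that fails for $q<1$. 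The relevant kernel computation then locates the scale: forcing $|\varphi(0)|\gtrsim t$ over a block of size $N$ costs $O(1)$ when $t\asymp N^{(\alpha-2)/q}$ for $\alpha\in(2,2+q/2)$, and when $t\asymp\sqrt{N}/(\ln N)^{1/q}$ at the threshold $\alpha=2+q/2$ (where the kernel sum is marginally divergent), giving $\mathrm{Var}^{q-\mathrm{SOS}}_{N}[\varphi(0)]\gtrsim_\beta N^{2(\alpha-2)/q}$ and $\gtrsim_\beta N/(\ln N)^{2/q}$ respectively. For $\alpha>2+q/2$ the target is the diffusive value $N$: this I would reach by showing that the nearest-neighbour increments $\varphi(k+1)-\varphi(k)$ each contribute an order-one amount to $\mathrm{Var}[\varphi(0)]$ when summed over the $\sim N$ bulk sites (again via Sellke's technique to bypass FKG), or by iterating the surgery over dyadic scales.

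\emph{Upper bounds.} Here I would compare $\mu^{q-\mathrm{SOS}}_{N,\beta,\alpha}$ with a Gaussian long-range chain of effective exponent $\alpha':=2\alpha/q$. A bondwise estimate such as $|x|^q\le1+x^2$ is far too lossy (it costs $e^{-\beta}$ per bond, i.e.\ $e^{-\beta N^2}$ in total), so instead one uses an integral representation of $|x|^q$ of Lévy--Khintchine type and selects a scale for each bond appropriately — this is where Sellke's technique enters — to dominate the two-point function of the $q$-SOS chain by that of a Gaussian chain with exponent $\alpha'$ (and a comparable inverse temperature). Theorem~\ref{thm.gaussian} applied with exponent $\alpha'$ then yields $\mathrm{Var}^{q-\mathrm{SOS}}_{N}[\varphi(0)]\les_\beta N^{\alpha'-2}=N^{2\alpha/q-2}$ for $\alpha'>2$, $\les_\beta\ln N$ for $\alpha'=2$, and $\les_\beta1$ for $\alpha'<2$ — that is, exactly the three cases $\alpha>q$, $\alpha=q$, $\alpha<q$ (with the $\alpha'=2$ case also using the complementary input of~\cite{frohlich1991phase} at large inverse temperature); moreover, since the Gaussian chain always obeys $\mathrm{Var}\le C_\beta N$ by Theorem~\ref{thm.gaussian}, the comparison automatically delivers this universal cap for the $q$-SOS chain too.

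\emph{Main obstacle.} The crux is the middle delocalisation regime $\alpha\in(2,2+q/2]$: the elementary estimates only give a constant, because any deterministic modification realising $\{\varphi(0)\sim t\}$ pays the full energy $\asymp t^q$, whereas the truth requires extracting only the smooth free-energy cost $\asymp t^q N^{2-\alpha}$, so that height $N^{(\alpha-2)/q}$ is reached at $O(1)$ cost. Making the surgery produce this is exactly where the absence of FKG for $q<1$ obstructs the standard monotone argument and where Sellke's technique is indispensable; pinning down the sharp power of $\ln N$ at the threshold $\alpha=2+q/2$ — and, on the upper-bound side, running the Lévy--Khintchine comparison without losing in the inverse temperature — will also require care.
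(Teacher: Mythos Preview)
Your proposal has a genuine gap in its understanding of what ``Sellke's technique'' actually is in this context, and this causes the middle-regime lower bound to remain essentially unproved.

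You repeatedly say that FKG fails for $q<1$ and that Sellke's idea is to bypass it. This is backwards. The technique, as used in the paper, \emph{is} an FKG argument --- but FKG is applied to the random conductances, not to the height function. Concretely, one writes $e^{-|x|^q}=\int_0^\infty e^{-\lambda x^2}\,d\mu_q(\lambda)$ (Proposition~\ref{prop:Gaussdecomp}), so that the $q$-SOS chain becomes an \emph{annealed} Gaussian chain with i.i.d.\ random conductances $(\lambda_{ij})$ distributed according to $\mu_q$. One then observes that both $\lambda\mapsto\mathrm{Var}^\lambda_{N,\beta_q,\alpha_q}[\varphi(0)]$ and $\lambda\mapsto Z^\lambda_{N,\beta_q,\alpha_q}$ are decreasing in the conductances (Proposition~\ref{prop:monotonicityconductances} and Remark~\ref{rem:partitionfunctionmonotone}), so FKG for independent variables gives
\[
\mathrm{Var}^{q-\mathrm{SOS}}_{N,\beta,\alpha}[\varphi(0)]
=\frac{\mathbf{E}_q\bigl[\mathrm{Var}^\lambda[\varphi(0)]\,Z^\lambda\bigr]}{\mathbf{E}_q[Z^\lambda]}
\ge \mathbf{E}_q\bigl[\mathrm{Var}^\lambda[\varphi(0)]\bigr].
\]
After this reduction, the graph surgery of Section~\ref{sec:lowerboundGaussian} runs verbatim on the quenched Gaussian chain; the only new work is to show that the resulting nearest-neighbour conductances (now random) are tight, which is a Laplace-transform computation using that $\mu_q$ is $(q/2)$-stable. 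This is what produces the thresholds $\alpha=2+q/2$ and the exponent $\frac{2}{q}(\alpha-2)$. Your ``resampling/free-energy cost'' sketch does not do any of this: it stays at the level of heuristics (``forcing $|\varphi(0)|\gtrsim t$ costs $O(1)$ when\dots'') and never produces the mechanism that converts the power $|x|^q$ into the effective range exponent $\alpha_q=2\alpha/q$.

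For the upper bound you are closer in spirit --- the effective exponent $\alpha'=2\alpha/q$ is correct --- but the comparison is again not to a deterministic Gaussian chain. The paper tilts the conductance law to $\tilde\mu_q(d\lambda)\propto\lambda^{-1/2}\mu_q(d\lambda)$, shows that $\lambda\mapsto Z^\lambda\prod\sqrt{\lambda_{ij}}$ is \emph{increasing} (via the Gaussian-domination bound of Lemma~\ref{lemma:lemma2.2}), and applies FKG the other way to get $\mathrm{Var}^{q-\mathrm{SOS}}\le\tilde{\mathbf{E}}_q[\mathrm{Var}^\lambda]$; B\"aumler's path argument then finishes. Invoking Theorem~\ref{thm.gaussian} directly, as you suggest, does not work because the conductances are random and heavy-tailed; and your remark that the case $\alpha'=2$ needs~\cite{frohlich1991phase} is incorrect --- the upper bound $C_\beta\ln N$ comes straight from B\"aumler's argument at every $\beta$.
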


\begin{remark}
Let us make a few remarks about the previous result:
\begin{itemize}
	\item Delocalisation in the qualitative sense (absence of shift-invariant Gibbs measure) under a $q$-th moment condition can be derived for any $\alpha>2$ and any $q > 2$ in the same fashion as in the paper~\cite{coquille2024absence}. We thus add this weak delocalisation zone on Figure \ref{fig:alpha-q-plane}.
    \item Unfortunately, the method does not seem to yield matching upper and lower bounds for the fluctuations of the chain (the reason is technical and discussed in Remark~\ref{rem:rem4.6} below). We believe that it is an interesting open question to identify the correct fluctuation exponent. Considering our proofs in more details, there seems to be more space for improvement in the upper bound case, and it is thus tempting to conjecture that the lower bound should be the correct one.
    \item As it was the case for the Gaussian chain, an explicit dependence of the constants in the inverse temperature $\beta$ could be extracted from the argument (but this dependence is more intricate to follow than in the Gaussian case, we thus decided not to keep track of it in Section~\ref{sec.section4qSOS}).
    \item All the results stated in Theorem~\ref{thm.qSOS} and their proofs apply to the real-valued $q$-SOS long-range chain (N.B. for this model, the distribution of the real-valued random interface cannot be identified as easily as in the Gaussian case).
    \item When $q \to \infty$, the interface $\varphi$ will take
    values either in $\{0,1\}$ or in $\{0,-1\}$ with probability $1/2$, so the limit $q \to \infty$ corresponds to a symmetric convex combination of Ising models.
\end{itemize}
\end{remark}

\begin{figure}
	\begin{center}
		\includegraphics[scale=0.55]{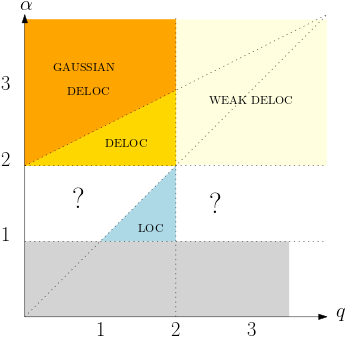}
		\caption{Partial phase diagram of the $q$-SOS long-range chain.} \label{fig:alpha-q-plane}
	\end{center}
\end{figure}

\subsection{Related results}

A few (already mentioned) results have been established on the integer-valued Gaussian chain, and we now present a few more details about the statements and techniques of proof.

\begin{itemize}
	\item In~\cite{KH82}, Kjaer and Hilhorst study the integer-valued Gaussian chain (with periodic boundary conditions) and use a duality transformation to derive an exact identity for the variance of the height of the chain when $\beta = 1$ and when the long-range interaction is exactly $\frac{1}{|i - j|^2 - 1/4}$. Their identity shows in particular that this variance diverges logarithmically fast in the length of the chain. We mention that this approach was recently extended to the two-dimensional setting by Cornu, Hilhorst and Bauer~\cite{cornu2023new}.
	\item In~\cite{frohlich1981kosterlitz}, Fr\"{o}hlich and Zegarlinski study the behaviour of the chain in the regime $\alpha = 2$ and $\beta \gg 1$ and show that the integer-valued Gaussian chain is localised. The proof relies on the implementation of a Peierls-type argument (inspired by the one developed in the work of Fr\"{o}hlich and Spencer~\cite{frohlich1982phase} for the 1D long-range Ising model). Combined with~\cite{KH82} (and a correlation inequality), their result implies the existence of a phase transition for the Gaussian chain for $\alpha = 2$ between a localised regime at low temperature and a delocalised regime at high temperature.
	\item In~\cite{garban2023invisibility}, Garban studies the integer-valued Gaussian chain in the high temperature regime (i.e., $\beta \ll 1$) for the range exponent $\alpha \in [2 , 3)$ and obtains (among other results) very precise information on the behaviour of the chain by identifying its scaling limit. His result exhibits an interesting phenomenon called \emph{the invisibility of the integers}: the scaling limit of the integer-valued Gaussian chain is shown to be exactly the same as the one of the real-valued Gaussian chain (N.B. this property is known to be false for related models such as the integer-valued Gaussian chain with $\alpha > 3$ and $\beta \gg 1$~\cite[Remark 14]{garban2023invisibility}, or for the two-dimensional integer-valued Gaussian free field~\cite{GS23}). Let us emphasize that the results of~\cite{garban2023invisibility} include the case $\alpha = 2$ (for small $\beta$) even though the Gaussian chain is known to exhibit a phase transition in this regime. On a currently less rigorous level, it is conjectured that the invisibility of integers holds in the case $\alpha = 2$ for any $\beta$ (strictly) smaller than the critical threshold for the localisation/delocalisation phase transition (see~\cite{slurink1983roughening} and~\cite[Open Problem 4]{garban2023invisibility}) and that it holds for any value of $\beta \in (0 , \infty)$ in the case $\alpha \in (2 , 3)$ (see~\cite[Open Problem 3]{garban2023invisibility}).
    
    Other results obtained in~\cite[Theorem 1.1 and Proposition 1.4]{garban2023invisibility} include upper bounds for the fluctuations of the interface at every inverse temperature and for any $\alpha > 1$ and lower bounds in the high temperature regime for any $\alpha \geq 2$ (the high temperature constraint can be removed for $\alpha > 3$).
	\item In~\cite{coquille2024absence}, two of the authors of the present paper showed with van Enter and Ruszel the absence of shift-invariant Gibbs states (i.e. a qualitative statement of delocalisation) of the $q$-SOS chain at every inverse temperature for any $q\in[1,2]$ and range exponent $\alpha > 2$ by adapting an argument for the 1D long-range Ising model (see~\cite{coquille2018absence} and references therein). The absence of shift-invariant Gibbs state $ \mu_{\beta , \alpha}^{q-\mathrm{SOS}}$ under a $q$-th moment condition ($ \mu_{\beta , \alpha}^{q-\mathrm{SOS}}(|\varphi(0)|^q)<\infty$) can be derived for any $\alpha>2$ and any $q > 2$ in the same fashion as in the paper \cite{coquille2024absence}.
\end{itemize}

The localisation/delocalisation phase transition of the integer-valued Gaussian chain for $\alpha = 2$ is closely related to the localisation/delocalisation phase transition of the 2D integer-valued Gaussian free field (see Definition~\ref{def:2DGFF} below). The existence of this phase transition was first established in the seminal article of Fr\"{o}hlich and Spencer~\cite{frohlich1981kosterlitz}, and a new proof has been recently obtained by Lammers~\cite{lammers2022height}. Since then, there has been an important number of recent remarkable developments on the topic~\cite{kharash2017fr, wirth2019maximum, garban2023statistical, GS23, BRP1, BRP2, P24, LO24, lammers2022dichotomy, aizenman2021depinning, van2023elementary, van2023duality, laslier2024tilted, biskup2024}.

\subsection{Strategy of the proof} \label{section:stratofproof}

As mentioned above the strategy of the proof of Theorem~\ref{thm.gaussian} relies on graph surgery. To explain the argument in more details, we first introduce a definition.

\begin{definition}\label{def:IV-GFF}
	To each finite connected rooted graph $G := (V , E , x)$ (with vertex set $V$, edge set $E$, root $x \in V$) equipped with conductances $\lambda := (\lambda_{ij})_{ij \in E} \in (0 , \infty)^E$, we associate a random interface by equipping the set of interfaces (or height functions) $\Omega_V := \left\{ \varphi : V \to \Z \, : \, \varphi(x) = 0 \right\}$ with the probability distribution
	\begin{equation*}
	\mathbb{P}^{\mathrm{IV-GFF}}_{G , \lambda}( \{ \varphi \}) := \frac{1}{Z_{G , \lambda}} \exp \left( - \sum_{ij \in E} \lambda_{ij} \left( \varphi(i) - \varphi(j) \right)^2 \right).
	\end{equation*}
	We denote by $\mathrm{Var}_{G , \lambda}^{\mathrm{IV-GFF}}$ the variance with respect to the measure $\mathbb{P}^{\mathrm{IV-GFF}}_{G , \lambda}$.
\end{definition}
In this formalism, the integer-valued Gaussian long-range chain introduced in Definition~\ref{def.def1.1} can be represented as the line $\{ - N , \ldots , N \} \subseteq \Z$ with long-range edges connecting any two pair of vertices $i , j \in \{ - N , \ldots , N \}$ and the conductance of the edge $ij$ is equal to $\beta / |i - j|^{\alpha}$ (N.B. this is not fully correct as stated and we refer to Remark~\ref{rem:partitionfunctionmonotone} for the exact interpretation). This graph will be denoted by $G_N$ in the rest of this section (see Figure~\ref{fig:sectionstrategy}).

\begin{figure} 
\begin{center}
\includegraphics[scale=0.45]{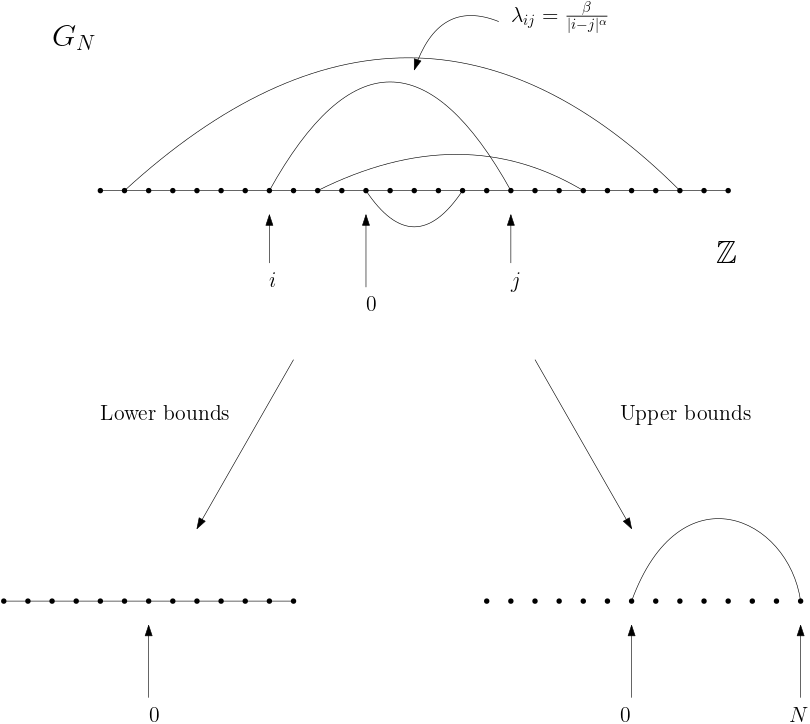}
\caption{Strategy of the proof of Theorem~\ref{thm.gaussian}. The graph $G_N$ associated with the integer-valued Gaussian chain~\eqref{def.IVGFFlongrange} is drawn on the top line. This graph is then simplified using the operations (i), (ii) and (iii) so as to either obtain a nearest neighbour line for the lower bounds or to obtain a graph containing only one edge for the upper bound. On the resulting graphs, the variance of $\varphi(0)$ can be easily computed.} \label{fig:sectionstrategy}
\end{center}
\end{figure}

Let us then fix a finite rooted graph $G := (V , E , x)$ equipped with conductances as well as a vertex $v \in V$ (N.B. this graph can be any graph and is not necessarily the graph $G_N$). The core of the proof of Theorem~\ref{thm.gaussian} relies on the fact that the following operations on the graph $G$ have a monotonic effect on the variance of the height $\varphi(v)$:
\begin{enumerate}
    \item[(i)] Erasing an edge from the graph $G$ as in Figure~\ref{removingedge.picture} increases the variance of the height $\varphi(v)$ (see Proposition~\ref{corollary.deletion.edges});
    \item[(ii)] Identifying two vertices of the graph $G$ as in Figure~\ref{identification.picture} reduces the variance of $\varphi (v)$ (see Proposition~\ref{corollary.identification});
    \item[(iii)] Adding a new vertex in the middle of an edge while suitably adjusting the conductances as in Figure~\ref{adition1.picture} reduces the variance of $\varphi(v)$ (see Proposition~\ref{prop.prop2.9}).
\end{enumerate}
These operations can then be combined so as to simplify the graph $G_N$ associated with the integer-valued Gaussian chain~\eqref{def.IVGFFlongrange}. More specifically, we will proceed differently for the lower and upper bounds (see Figure~\ref{fig:sectionstrategy}):
\begin{itemize}
    \item The lower bounds: in this case, we will use the operations (i), (ii) and (iii) above to reduce the graph $G_N$ to a nearest neighbour line while reducing the variance of the height $\varphi(0)$.
    \item The upper bounds: in this case, we adapt an argument of Ba\"{u}mler~\cite[Proof of Theorem 1.1]{baumler2023recurrence} and use the operations (i) and (ii) above to reduce the graph $G_N$ to the graph whose vertex set is the line $\{ - N , \ldots , N \}$ but which contains only one edge connecting the vertex $0$ to the vertex $N$ (N.B. we have the constraint $\varphi(N) = 0$) while increasing the variance of the height $\varphi(0)$.
\end{itemize}
In both cases, the variance of $\varphi(0)$ on the resulting graph can be easily computed, yielding the estimates of Theorem~\ref{thm.gaussian}.

The proof of Theorem~\ref{thm.qSOS} for the $q$-SOS long-range chain is based on a similar strategy with one additional idea due to Sellke~\cite{sellke2024localization}. We first rely on the property that the power-law interaction $x \mapsto e^{-|x|^q}$ (with $q \in (0 , 2]$) can be decomposed into a mixture of centred Gaussian densities, i.e., there exists a probability measure $\mu_q$ on $[0 , \infty)$ such that, for any $x \in \R$,
\begin{equation} \label{eq:stratmixturegaussian}
    e^{- |x|^q} = \int_{0}^\infty e^{- \lambda |x|^2} \mu_q(d \lambda).
\end{equation}
The identity~\eqref{eq:stratmixturegaussian} can be used to show that the variance of the height $\varphi(0)$ for the $q$-SOS long-range chain is equal to the variance of the height $\varphi(0)$ of an integer-valued Gaussian chain equipped with random conductances (N.B. this observation has already been used to study random interfaces and a brief account of the literature can be found in Section~\ref{sec:section2.5.1}). This latter model can be analysed using the same techniques as the ones used in the proof of Theorem~\ref{thm.gaussian} together with the FKG correlation inequality (N.B. the use of the FKG inequality is one of the novelties of~\cite{sellke2024localization}).

\bigskip

\section{Definitions and preliminaries} \label{sec:secprelim}

In this section, we introduce some notation and preliminary results. Specifically, we introduce a general version of the integer-valued Gaussian free field and state some monotonicity results (monotonicity in the conductances, under addition and identification of vertices and under deletion of edges) which are then used to perform the graph surgery in the proofs of Theorems~\ref{thm.gaussian} and~\ref{thm.qSOS}. We remark that these monotonicity properties have been recently used by Aizenman, Harel, Peled and Shapiro~\cite{aizenman2021depinning} and by van Engelenburg and Lis~\cite{van2023elementary, van2023duality} (to establish either the existence of a phase transition for two-dimensional integer-valued height functions or the existence of a Berezinskii-Kosterlitz-Thouless -- BKT -- phase transition for $O(1)$ spin systems) and by Garban~\cite{garban2023invisibility} to study the Gaussian chain.

\subsection{General notation}

In all this section, we consider a (finite) rooted graph $G = (V , E , x)$ where $V$ is the set of vertices, $E$ is the set of edges of $G$ and $x \in V$ is the root. 

The graph $G$ can then be equipped with conductances by considering a collection of non-negative real numbers $\left( \lambda_{ij} \right)_{ij \in E} \in [0 , \infty)^E$ indexed by the edges of $G$ (or equivalently a function from $E$ to $(0 , \infty)$). 

We equip the collection of conductances of the graph $G$ with a partial order as follows: given two collections of conductances $\lambda = \left( \lambda_{ij} \right)_{ij \in E} \in (0 , \infty)^E $ and $\lambda' = \left( \lambda'_{ij} \right)_{ij \in E} \in [0 , \infty)^E $, we write
\begin{equation*}
    \lambda \succeq \lambda' ~~ \iff ~~ \forall ij \in E, ~ \lambda_{ij} \geq \lambda'_{ij}.
\end{equation*}
A function $f : [0 , \infty)^E \to \R$ is called increasing (resp. decreasing) if
\begin{equation} \label{def.increasingfunction}
    \forall \lambda, \lambda' \in [0 , \infty)^E, ~ \lambda \succeq \lambda' ~ \implies ~ f(\lambda) \geq f(\lambda') ~~~(\mbox{resp.}~ f(\lambda) \leq f(\lambda')).
\end{equation}
We denote by $\Omega_V := \left\{ \varphi : V \mapsto \Z \, : \, \varphi(x) = 0 \right\} \simeq \Z^{V \setminus \{ x \}}$ the set of integer-valued height functions on the rooted graph $G = (V , E , x)$.

\subsection{Integer-valued Gaussian free field on a finite graph}

In this subsection, we introduce the integer-valued Gaussian distribution and the integer-valued Gaussian free field on the graph $G$.

\subsubsection{integer-valued Gaussian random variable}

\begin{definition}
    For $\lambda > 0$, we define the integer-valued Gaussian distribution of conductance $\lambda$ to be the probability distribution on $\Z$ given by the identity: for any $k \in \Z$,
    \begin{equation*}
        \mathbb{P}^{\mathrm{IV-G}}_\lambda (\{ k \}) := \frac{e^{- \lambda k^2}}{\sum_{k \in \Z} e^{- \lambda k^2}}.
    \end{equation*}
    We denote by $\mathrm{Var}^{\mathrm{IV-G}}_\lambda$ the variance with respect to $\mathbb{P}^{\mathrm{IV-G}}_\lambda$.
\end{definition}

A random variable whose law is an integer-valued Gaussian distribution is called an integer-valued Gaussian random variable (of conductance $\lambda$).

In the following lemma, we let $X$ be an integer-valued Gaussian random variable of conductance $\lambda$ and estimate its variance as a function of $\lambda$. 

\begin{lemma} \label{lemma:lemma2.2}
    There exist two constants $c > 0$ and $C < \infty$ such that:
    \begin{itemize}
        \item Gaussian domination bound~\cite{frohlich1978correlation} (cf.~\cite{kharash2017fr}): for any $\lambda \in (0 , \infty),$
        \begin{equation*}
            \mathrm{Var}^{\mathrm{IV-G}}_\lambda [X] \leq \frac{1}{2\lambda}.
        \end{equation*}
        \item Behaviour as $\lambda \to 0$: for any $\lambda \in (0 , 1)$,
        \begin{equation*}
            \frac{c}{\lambda} \leq \mathrm{Var}^{\mathrm{IV-G}}_\lambda [X] \leq \frac{1}{2\lambda}.
        \end{equation*}
        \item Behaviour as $\lambda \to \infty$: for any $\lambda \in (1 , \infty)$,
        \begin{equation*}
              c e^{- \lambda}  \leq \mathrm{Var}^{\mathrm{IV-G}}_\lambda [X] \leq C e^{- \lambda}.
        \end{equation*}
    \end{itemize}
\end{lemma}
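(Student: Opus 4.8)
The plan is to prove the three estimates for the variance of a discrete Gaussian random variable $X$ of conductance $\lambda$ separately, relying on the explicit series representation
\[
\mathrm{Var}^{DG}_\lambda[X] = \frac{\sum_{k \in \Z} k^2 e^{-\lambda k^2}}{\sum_{k \in \Z} e^{-\lambda k^2}},
\]
where we have used that the distribution is symmetric so $\E[X]=0$. The Gaussian domination bound $\mathrm{Var}^{DG}_\lambda[X] \le \frac{1}{2\lambda}$ is the classical inequality of Fröhlich--Simon--Spencer (as cited); it can be recovered, for instance, by comparing the discrete partition function with the continuous one via Poisson summation, or by the standard convexity/reflection-positivity argument. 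I would simply invoke it.

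For the small-$\lambda$ lower bound $\mathrm{Var}^{DG}_\lambda[X] \ge c/\lambda$, the cleanest route is a Riemann-sum comparison. Substituting $k = t/\sqrt\lambda$, both the numerator and denominator of the ratio above are, up to the factor $\lambda^{-1}$ (resp. $\lambda^{-1/2}$ and an extra $\lambda^{-1}$ in the numerator), Riemann sums with mesh $\sqrt\lambda$ for the Gaussian integrals $\int_\R t^2 e^{-t^2}\,dt$ and $\int_\R e^{-t^2}\,dt$. For $\lambda \in (0,1)$ one shows that
\[
\sum_{k \in \Z} e^{-\lambda k^2} \le \frac{C}{\sqrt\lambda}, \qquad \sum_{k \in \Z} k^2 e^{-\lambda k^2} \ge \frac{c}{\lambda^{3/2}},
\]
the first by bounding the sum above by $1 + \int_\R e^{-\lambda t^2}\,dt$, the second by bounding the sum below by $\int_\R t^2 e^{-\lambda t^2}\,dt$ minus a controlled correction (or, more simply, by retaining only the handful of terms with $|k| \le 1/\sqrt\lambda$, each of which contributes at least a fixed fraction). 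Dividing gives the claim. The upper bound $\le \frac{1}{2\lambda}$ is already contained in the Gaussian domination bound.

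For the large-$\lambda$ estimate, one isolates the dominant terms $k \in \{-1,0,1\}$: write $Z := \sum_{k\in\Z} e^{-\lambda k^2} = 1 + 2e^{-\lambda} + R$ with $0 \le R = 2\sum_{k\ge 2} e^{-\lambda k^2} \le 2\sum_{k\ge 2} e^{-2\lambda(k-1)} \le C e^{-2\lambda}$ for $\lambda > 1$, and similarly $\sum_{k\in\Z} k^2 e^{-\lambda k^2} = 2e^{-\lambda} + R'$ with $0 \le R' \le C e^{-2\lambda}$. Hence
\[
\mathrm{Var}^{DG}_\lambda[X] = \frac{2 e^{-\lambda} + R'}{1 + 2e^{-\lambda} + R} = 2 e^{-\lambda}\bigl(1 + O(e^{-\lambda})\bigr),
\]
which yields both $c e^{-\lambda} \le \mathrm{Var}^{DG}_\lambda[X] \le C e^{-\lambda}$ for $\lambda \in (1,\infty)$ after adjusting constants. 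None of the steps present a genuine obstacle; the only mild care needed is in making the Riemann-sum comparison in the small-$\lambda$ regime fully rigorous (uniformly in $\lambda \in (0,1)$), and in fixing the absolute constants so that all three displayed inequalities hold simultaneously, but this is routine.
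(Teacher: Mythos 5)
Your proposal is correct. Note that the paper itself gives no proof of this lemma: it is stated as a standard fact, with the first bullet attributed to the Gaussian domination bound of Fr\"{o}hlich--Simon--Spencer (cf.\ Kharash--Peled) and the two asymptotic regimes left as routine computations on the series $\mathrm{Var}^{DG}_\lambda[X] = \sum_k k^2 e^{-\lambda k^2} / \sum_k e^{-\lambda k^2}$. Your argument is precisely that expected computation: invoking the cited domination bound, a Riemann-sum comparison for $\lambda \in (0,1)$, and isolation of the $k \in \{-1,0,1\}$ terms for $\lambda > 1$, and all the estimates you sketch do go through (e.g.\ $e^{-\lambda k^2} \le e^{-2\lambda(k-1)}$ for $k \ge 2$ gives $R, R' \le C e^{-2\lambda}$). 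The only imprecision is the phrase about the terms with $|k| \le 1/\sqrt{\lambda}$ ``each contributing at least a fixed fraction'': the small-$k$ terms contribute little, and what you actually need is that the roughly $\tfrac{1}{2\sqrt{\lambda}}$ integers with $\tfrac{1}{2\sqrt{\lambda}} \le |k| \le \tfrac{1}{\sqrt{\lambda}}$ each contribute at least $\tfrac{e^{-1}}{4\lambda}$, yielding the $c\lambda^{-3/2}$ lower bound on the numerator (with the trivial case $\lambda$ bounded away from $0$ handled by the single term $k=1$); this is a wording slip, not a gap.
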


\subsubsection{Integer-valued Gaussian free field on a finite graph}

We recall the Definition~\ref{def:IV-GFF} of the integer-valued GFF and precise that $Z_{G , \lambda}$ is the normalizing constant (or partition function) defined by the identity
\begin{equation} \label{def.partitionfunctioncond}
    Z_{G , \lambda} := \sum_{\varphi \in \Omega_V} \exp \left( - \sum_{ij \in E} \lambda_{ij} \left( \varphi(i) - \varphi(j) \right)^2 \right).
\end{equation}

\begin{remark}\label{rem:partitionfunctionmonotone}
Let us make a few remarks:
\begin{itemize}
    \item Definition~\ref{def:IV-GFF} only makes sense if the graph $G$ is connected and we may have to consider the variance of the height of a vertex $v \in V$, i.e., $\mathrm{Var}_{G , \lambda}^{\mathrm{IV-GFF}}[\varphi(v)] $, for disconnected graphs. In order to do so, we extend the definition of this quantity to disconnected graphs by using Definition~\ref{def:IV-GFF} on the connected component of the root and by setting $\mathrm{Var}_{G , \lambda}^{\mathrm{IV-GFF}}[\varphi(v)] = \infty$ for any vertex $v$ which is not in this connected component. 
    \item The model~\eqref{def.IVGFFlongrange} is obtained by considering the complete graph with $2N$ vertices, identifying the set of vertices with the set $\{ - N +1 , \ldots , N \}$ (in an arbitrary way) and then considering the conductances 
    $$\lambda_{ij} = \frac{\beta}{|i - j|^\alpha} ~ \mbox{for} ~ i , j \in \{ - N +1 , \ldots , N -1 \}$$ 
    and $$\lambda_{iN} := \sum_{j \in \Z \setminus \{ - N +1 , \ldots , N - 1 \}} \frac{\beta}{|i - j|^\alpha}  ~ \mbox{for} ~ i \in \{ - N +1 , \ldots , N -1 \}.$$
    We denote by $ \mathrm{Var}_{N,\beta,\alpha}$ the variance under the corresponding probability measure.
    \item A simple graph of interest for us is the nearest neighbour chain defined as follows. Given an integer $L \in \N$, we consider the graph whose vertices are $\{ 0 , \ldots, L\}$, equipped with nearest neighbour edges and rooted in $L$. We then consider a collection of conductances $\lambda := (\lambda_{i(i+1)})_{i \in \{0 , \ldots, L-1\}}$ and let $\varphi : \{ 0 , \ldots , L\} \to \Z$ be sampled according to the integer-valued Gaussian free field on this graph. The variance of the height $\varphi(0)$, denoted by $\mathrm{Var}_{L , \lambda}\left[ \varphi(0) \right]$, can then be explicitly computed by writing 
    $$\varphi(0) = \sum_{i = 0}^{L-1} \left( \varphi(i) - \varphi(i + 1)\right)$$
    and by observing that the random variables $\lambda :=  \left( \varphi(i) - \varphi(i + 1)\right)_{i \in \{ 0 , \ldots , L - 1\}}$ are independent and that they follow the integer-valued Gaussian distribution with conductance $(\lambda_{i(i+1)})_{i \in \{0 , \ldots, L-1 \} }$. In particular, we have
    \begin{equation*}
        \mathrm{Var}_{L , \lambda}\left[ \varphi(0) \right] = \sum_{i = 0}^{L-1} c_{\lambda_{i(i+1)}},
    \end{equation*}
    where $c_{\lambda_{i(i+1)}}$ denotes the variance of an integer-valued Gaussian random variable of conductance $\lambda_{i(i+1)}$. In the case where all the conductances are equal to the same value $\lambda \in (0, \infty),$ we have
    \begin{equation} \label{eq:varianceforthechain}
        \mathrm{Var}_{L , \lambda}\left[ \varphi(0) \right] = c_{\lambda} L.
    \end{equation}
    \item The function $\lambda \mapsto Z_{G , \lambda}$ is decreasing in $\lambda$ (as increasing the values of the conductances reduces each term of the sum in the right-hand side of~\eqref{def.partitionfunctioncond}).
\end{itemize}
\end{remark}

\subsection{Monotonicity of the variance of the height in the conductances} \label{sectionconductancemonotone}

In this section, we state a monotonicity property in the conductances for the height of the integer-valued Gaussian free field and collect two corollaries (the monotonicity under deletion of vertices and identification of vertices). These results are all an almost immediate consequence of the Regev-Stephens-Davidowitz monotonicity theory~\cite{regev2017inequality} and they will be used extensively in the proofs below. 

\subsubsection{Monotonicity in the conductances}

The following statement asserts that the variance of the height and the differences of the heights of an integer-valued Gaussian free field are decreasing functions of the conductances.

\begin{proposition}[Monotonicity of the height and differences of heights in the conductances~\cite{regev2017inequality}] \label{prop:monotonicityconductances}
The following statement holds:
\begin{equation*}
\forall (i , j) \in  V \times V, ~ \mbox{the function}~ \lambda \mapsto \mathrm{Var}_{G , \lambda}^{\mathrm{IV-GFF}}\left[ \varphi(i) - \varphi(j)  \right] ~\mbox{is decreasing in} ~ \lambda \in (0 , \infty)^E.
\end{equation*}
In particular, by setting $j = x$,
\begin{equation*}
\forall i \in  V, ~ \mbox{the function}~ \lambda \mapsto \mathrm{Var}_{G , \lambda}^{\mathrm{IV-GFF}}\left[ \varphi(i)  \right] ~\mbox{is decreasing in} ~ \lambda \in (0 , \infty)^E.
\end{equation*}
\end{proposition}

\begin{remark}
More generally, these properties are valid for linear functionals of the height function $\varphi$ with the moment generating function replacing the variance, see~\cite{regev2017inequality}.
\end{remark}

We collect in the following subsections two corollaries of Proposition~\ref{prop:monotonicityconductances}: the monotonicity of the variance of the height under deletion of edges and under identification of vertices. They are obtained by sending the value of a specific conductance $\lambda_{ij}$ to the extremal values $0$ and infinity in Proposition~\ref{prop:monotonicityconductances}.

\subsubsection{Monotonicity under deletion of edges}

\begin{proposition}[Monotonicity under deletion of edges] \label{corollary.deletion.edges}
Let $yz \in E$ be an edge of $G$ and $\lambda \in (0 , \infty)^E$ be a collection of conductances. Denote by $G^*$ the graph $G$ from which the edge $yz$ has been removed and by $\lambda^*$ the restriction of $\lambda$ to the edges of $G^*$ (see Figure~\ref{removingedge.picture}). Then
\begin{equation*} 
     \forall ~ (i , j) \in V \times V,~\mathrm{Var}_{G , \lambda}^{\mathrm{IV-GFF}}\left[  \varphi(i) - \varphi(j) \right] \leq \mathrm{Var}_{G^* , \lambda^*}^{\mathrm{IV-GFF}}\left[ \varphi(i) - \varphi(j) \right].
\end{equation*}
In particular, by setting $j = x$,
\begin{equation*}
    \forall ~ i \in  V,~\mathrm{Var}_{G , \lambda}^{\mathrm{IV-GFF}}\left[ \varphi(i)  \right] \leq \mathrm{Var}_{G^* , \lambda^*}^{\mathrm{IV-GFF}}\left[ \varphi(i) \right].
\end{equation*}

\end{proposition}

\begin{figure} 
\begin{center}
\includegraphics[width=10cm]{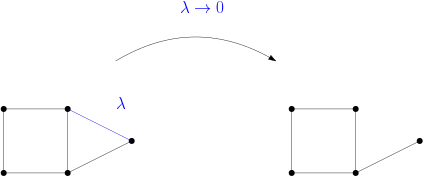}
\caption{Removing an edge from a graph. This operation increases the variance of the linear functionals of the field} \label{removingedge.picture}
\end{center}
\end{figure}

\begin{proof}
    This result is obtained by considering the graph $G$ with conductances $\lambda \in (0 , \infty)^E$ and reducing the value of the conductance $\lambda_{yz}$ to $0$. This operation removes the edge $yz$ from the graph and increases the variances of $\varphi(i)$ and $\varphi(i) - \varphi(j)$ by Proposition~\ref{prop:monotonicityconductances}.
\end{proof}

\subsubsection{Monotonicity under identification of vertices} \label{sec:monotneconductance}

We now state the monotonicity under the identification of vertices. For the definition of $\bar \lambda$ below, we will use the convention $\lambda_{xy} = 0$ if $xy \notin E$.

\begin{proposition}[Monotonicity under identification of vertices] \label{corollary.identification}
Let $y_1 , y_2 \in V$ be two vertices of $G$ and $\lambda \in (0 , \infty)^E$ be a collection of conductances. Denote by $\bar G = (\bar V, \bar E, x)$ the graph $G$ in which the vertices $y_1$ and $y_2$ have been identified, denote by $y$ this new vertex, and let $\bar \lambda : \bar E \to (0 , \infty)$ the collection of conductances defined by (see Figure~\ref{identification.picture})
\begin{equation*}
     \bar \lambda_{y z} := \lambda_{y_1 z} + \lambda_{y_1 z} ~ \mbox{for}~ yz \in \bar E ~\mbox{and} ~ \bar \lambda_{z z} = \lambda_{z z'}  ~ \mbox{for}~ zz' \in \bar E  ~\mbox{with}~ z \neq y ~\mbox{and} ~ z' \neq y . 
\end{equation*}
Then
\begin{equation*}
         \forall ~ (i , j) \in V \times V,~\mathrm{Var}_{\bar G , \bar \lambda}^{\mathrm{IV-GFF}}\left[  \varphi(i) - \varphi(j)  \right] \leq \mathrm{Var}_{G , \lambda}^{\mathrm{IV-GFF}}\left[  \varphi(i) - \varphi(j)  \right].
\end{equation*}
In particular, by setting $j = x$,
\begin{equation*} 
      \forall ~ i \in  V,~\mathrm{Var}_{\bar G , \bar \lambda}^{\mathrm{IV-GFF}}\left[  \varphi(i) \right] \leq \mathrm{Var}_{G , \lambda}^{\mathrm{IV-GFF}}\left[ \varphi(i) \right].
\end{equation*}
\end{proposition}

\begin{figure} 
\begin{center}
\includegraphics[width=10cm]{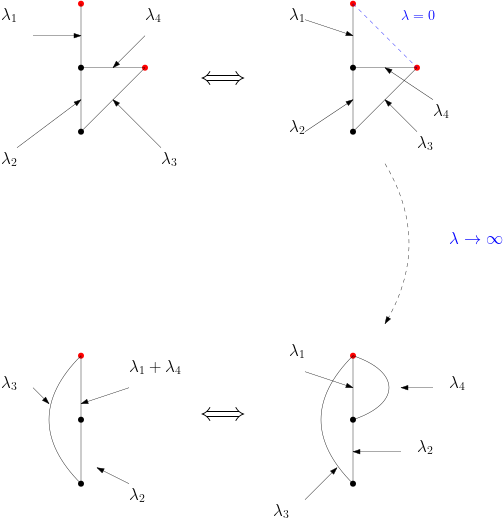}
\caption{Identifying the vertices in a graph. This operation reduces the variance of linear functionals of the field} \label{identification.picture}
\end{center}
\end{figure}

\begin{proof}
    We distinguish two cases: whether $yz$ is an edge of $E$ or not (i.e., $yz \in E$ or $yz \notin E$).
    
    In the first case, we increase the value of the conductance $\lambda_{yz}$ to infinity. This operation identifies the vertices $y$ and $z$ and reduces the variances of $\varphi(i)$ and $\varphi(i) - \varphi(j)$ by Proposition~\ref{prop:monotonicityconductances}.

    In the second case, we let $G'$ be the graph $G$ to which the edge $yz$ has been added and extend the collection of conductances $\lambda$ to the edge set of $G'$ by setting $\lambda_{yz} = 0$. We then increase the value of the conductance $\lambda_{yz}$ from $0$ to infinity.
\end{proof}

\begin{remark} \label{rem:remarkmonotony}
    We conclude this section by mentioning three monotonicity properties which are direct consequences of Proposition~\ref{prop:monotonicityconductances} and Proposition~\ref{corollary.identification}. The variance $\mathrm{Var}_{N , \beta , \alpha}\left[ \varphi(0) \right]$ is:
\begin{itemize}
    \item Increasing in the length of the chain $N$ (this is obtained by using the second item of Remark~\ref{rem:partitionfunctionmonotone} and by applying Proposition~\ref{corollary.identification} to identify the vertices $N-1$ and $-N+1$ with the vertex $N$);
    \item Decreasing in the inverse temperature $\beta$ (this is a consequence of Proposition~\ref{prop:monotonicityconductances});
    \item Increasing in the range exponent $\alpha$ (this is a consequence of Proposition~\ref{prop:monotonicityconductances}).
\end{itemize}
\end{remark}

\subsection{Monotonicity of the variance of the height under addition of vertices}

In this section, we state a second monotonicity property which allows to add vertices to a graph while reducing the variances of the heights and differences of heights upon suitable modification of the conductances (see Figure~\ref{adition1.picture}). We note that this result was used by Aizenman, Harel, Peled and Shapiro~\cite{aizenman2021depinning} to study the phase transition of the two-dimensional integer-valued Gaussian free field.

\begin{proposition}[Monotonicity under addition of vertices] \label{prop.prop2.9}
Given an edge $yz \in E$, we denote by $\tilde G$ the graph obtained by adding a new vertex $t$ on the edge $yz$ (see Figure~\ref{adition1.picture}). We let $\tilde \lambda$ be a collection of conductances on the graph $\tilde G$ satisfying:
\begin{itemize}
\item On all the edges of $\tilde G$ which are not incident to $t$, $\tilde \lambda = \lambda$,
\item For the edges $yt$ and $tz$, $\frac{1}{\tilde \lambda_{yt}} + \frac{1}{\tilde \lambda_{tz}} = \frac{1}{\lambda_{yz}}$.
\end{itemize}
Then
\begin{equation*}
         \forall ~ (i , j) \in V \times V,~\mathrm{Var}_{\tilde G , \tilde \lambda}^{\mathrm{IV-GFF}}\left[\varphi(i) - \varphi(j) \right] \leq \mathrm{Var}_{G , \lambda}^{\mathrm{IV-GFF}}\left[ \varphi(i) - \varphi(j)  \right] .
\end{equation*}
In particular, by setting $j = x$,
\begin{equation*} 
     \forall ~ i \in  V,~\mathrm{Var}_{\tilde G , \tilde \lambda}^{\mathrm{IV-GFF}}\left[ \varphi(i) \right] \leq \mathrm{Var}_{G , \lambda}^{\mathrm{IV-GFF}}\left[\varphi(i) \right].
\end{equation*}
\end{proposition}

We briefly sketch the proof of this inequality and refer to~\cite[Section 2, Proof of Theorem 1.2]{aizenman2021depinning} for the formal argument:
\begin{itemize}
    \item The first step is to use the divisibility of the Gaussian distribution to show that one can add the vertex $t$ on the edge $yz$ (while modifying the conductances as in the statement of the proposition) without modifying the distribution of the height function as long as the height $\varphi(t)$ is assumed to be \emph{real-valued}. 
    \item The second step of the argument is to use a correlation inequality to show that conditioning the height $\varphi(t)$ to take integer values reduces the variances of the height $\varphi(i)$ and of the difference $\varphi(i) - \varphi(j)$. This second step can be proved using two different techniques: either the sublattice monotonicity of~\cite{regev2017inequality} (as in~\cite[proof of Theorem 1.2]{aizenman2021depinning}), or an adaptation of the Ginibre correlation inequality~\cite{ginibre1970general} as detailed in~\cite{frohlich1978correlation, kharash2017fr} or~\cite[Proposition 2.3]{garban2023invisibility}.
\end{itemize}

\begin{figure} 
\begin{center}
\includegraphics[width=10cm]{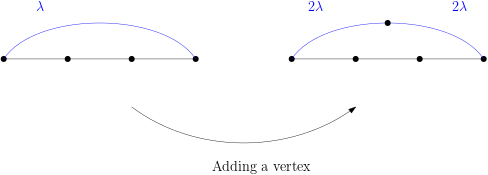}
\caption{Adding a vertex in the middle of an edge. This operation reduces the variance of linear functionals of the field.} \label{adition1.picture}
\end{center}
\end{figure}

Proposition~\ref{prop.prop2.9} can be iterated so as to obtain the following corollary which is one of the main ingredients of the proofs below (see Figure~\ref{AddingNvertices.picture}).

\begin{proposition} \label{corollary:splittingintoNparts}
 Consider an integer $N \in \N$ and an edge $yz \in E$, we denote by $\tilde G_N$ the graph obtained by adding $N$ new vertices $t_1, \ldots, t_N$ on the edge $yz$ (see Figure~\ref{AddingNvertices.picture}). 
    We denote by $\tilde \lambda$ the collection of conductances on the graph $\tilde G_N$ defined as follows:
\begin{itemize}
\item On the edges of $\tilde G_N$ which are not incident to the vertices $t_1 , \ldots, t_N$, we set $\tilde \lambda = \lambda$,
\item On the edges of $\tilde G_N$ which are incident to at least one of the vertices $t_1 , \ldots, t_N$, we set $\tilde \lambda = (N+1) \lambda$.
\end{itemize}
Then 
\begin{equation*}
         \forall ~ (i , j) \in V \times V,~\mathrm{Var}_{\tilde G , \tilde \lambda}^{\mathrm{IV-GFF}}\left[  \varphi(i) - \varphi(j)  \right] \leq \mathrm{Var}_{G , \lambda}^{\mathrm{IV-GFF}}\left[ \varphi(i) - \varphi(j) \right].
\end{equation*}
In particular, by setting $j = x$,
\begin{equation*} 
    \forall ~ i \in  V,~\mathrm{Var}_{\tilde G , \tilde \lambda}^{\mathrm{IV-GFF}}\left[  \varphi(i)  \right] \leq \mathrm{Var}_{G , \lambda}^{\mathrm{IV-GFF}}\left[ \varphi(i) \right].
\end{equation*}
\end{proposition}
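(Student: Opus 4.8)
The plan is to obtain Proposition~\ref{corollary:splittingintoNparts} by iterating Proposition~\ref{prop.prop2.9}, choosing the auxiliary parameter $\theta$ carefully at each step so that the conductances telescope to the claimed values. I would argue by induction on $N$. The base case $N = 1$ is exactly Proposition~\ref{prop.prop2.9} applied with $\theta = 2$: subdividing the edge $yz$ (of conductance $\lambda_{yz}$) produces the two edges $yt_1$ and $t_1 z$, both of conductance $2 \lambda_{yz} = (N+1)\lambda_{yz}$, while the variances of $\varphi(i)$ and of $\varphi(i) - \varphi(j)$ decrease.

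For the inductive step, assume the statement holds for $N - 1$. Starting from $G$ with the edge $yz$ of conductance $\lambda_{yz}$, first apply Proposition~\ref{prop.prop2.9} with $\theta = N + 1$. This adds one vertex $t_1$ on $yz$ and produces the edge $y t_1$ of conductance $(N+1)\lambda_{yz}$ (which will never be touched again) and the edge $t_1 z$ of conductance $\tfrac{N+1}{N}\lambda_{yz}$, while decreasing the two variances. Now apply the induction hypothesis to the resulting graph and to its edge $t_1 z$, which carries conductance $\mu := \tfrac{N+1}{N}\lambda_{yz}$: this adds $N - 1$ new vertices $t_2, \ldots, t_N$ on $t_1 z$, with every edge incident to one of them receiving conductance $((N-1)+1)\mu = N \cdot \tfrac{N+1}{N}\lambda_{yz} = (N+1)\lambda_{yz}$, and again decreases both variances. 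Composing the two inequalities, and noting that the final graph is precisely $\tilde G_N$ with the conductances described in the statement (edges not incident to $t_1, \ldots, t_N$ are untouched, and $yt_1$ together with all edges produced in the second phase carry conductance $(N+1)\lambda_{yz}$), yields the claim.

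There is no genuine obstacle here: the argument is essentially a bookkeeping exercise built on top of Proposition~\ref{prop.prop2.9}. The only point that requires a little care is the choice $\theta = N+1$ in the first application, which is forced by the requirement that both $y t_1$ and (after the induction hypothesis rescales conductances by a factor $N$) the edges of the second phase end up with the common value $(N+1)\lambda_{yz}$; one should also check $\theta = N+1 \in (1,\infty)$, valid for every $N \in \N^*$, so that Proposition~\ref{prop.prop2.9} indeed applies. Equivalently, one may avoid the induction and subdivide $yz$ from the $y$-side towards $z$ in $N$ successive steps, using at the $k$-th step the parameter $\theta_k = N + 2 - k \in [2,\infty)$; a short computation shows that after the $k$-th step the edge still incident to $z$ carries conductance $\tfrac{N+1}{N+1-k}\lambda_{yz}$, so that after $N$ steps all $N+1$ resulting edges have conductance $(N+1)\lambda_{yz}$, and composing the $N$ variance inequalities from Proposition~\ref{prop.prop2.9} concludes the proof.
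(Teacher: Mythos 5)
Your proof is correct and follows essentially the same route as the paper, which simply notes that Proposition~\ref{prop.prop2.9} "can be iterated" to yield Proposition~\ref{corollary:splittingintoNparts}; you carry out that iteration explicitly, and your choice of parameters (either $\theta = N+1$ plus induction, or $\theta_k = N+2-k$ at the $k$-th step) does make the conductances telescope to the common value $(N+1)\lambda_{yz}$ as required.
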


\begin{figure} 
\begin{center}
\includegraphics[width=10cm]{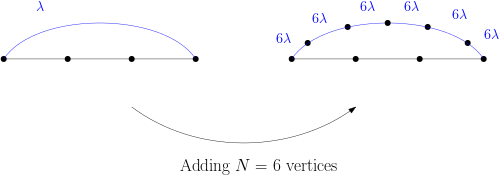}
\caption{Adding $N$ vertices in the middle of an edge (with here $N = 5$). This operation reduces the variance of linear functionals of the field} \label{AddingNvertices.picture}
\end{center}
\end{figure}

\subsection{Power-law interactions}

In this section, we collect two properties which play an important role in the analysis of the delocalisation of the $q$-SOS long-range chain: the decomposition of the power-law interaction as a mixture of Gaussian interactions and the (standard) FKG inequality for collections of independent random variables.

\subsubsection{Gaussian decomposition of power-law interactions} \label{sec:section2.5.1}

We state here the decomposition of the power-law interaction $x \mapsto e^{-|x|^q}$ (with $q \in (0 , 2]$) into a mixture of centred Gaussian densities. This decomposition is a well-known result for which we refer to the article of Penson and G\'{o}rska~\cite{penson2010exact} (and the references therein). 

We note that similar decompositions have played an important role in several contributions on random interfaces, for instance in the articles of Biskup-Koteck\'{y}~\cite{biskup2007phase}, Biskup-Spohn~\cite{biskupspohn} and Brydges-Spencer~\cite{brydges2012fluctuation}, and more recently in the article of Buchholz~\cite{buchholz2021phase}, Armstrong-Wu~\cite{armstrong2023scaling}, and Sellke~\cite{sellke2024localization} to study real-valued height functions, and of van Engelenburg-Lis~\cite{van2023duality} and Aizenman-Harel-Peled-Shapiro~\cite{aizenman2021depinning} to study integer-valued height functions.

\begin{proposition}[Gaussian decomposition of power-law interactions] \label{prop:Gaussdecomp}
For any exponent $q \in (0 , 2]$, there exists a Borel probability measure $\mu_q$ on $(0 , \infty)$ such that, for any $x \in \R$,
\begin{equation} \label{eq:defannealedgaussian}
    e^{- |x|^q} = \int_{(0 , \infty)} e^{- \lambda x^2} d \mu_q (\lambda).
\end{equation}
\end{proposition}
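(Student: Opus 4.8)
The plan is to reduce the statement, via the substitution $u = x^{2}$, to the classical fact that for $s \in (0,1]$ the function $u \mapsto e^{-u^{s}}$ is completely monotone on $[0,\infty)$, and then to invoke the Hausdorff--Bernstein--Widder representation theorem. Concretely, set $s := q/2 \in (0,1]$; since $|x|^{q} = (x^{2})^{s} = u^{s}$, the claimed identity~\eqref{eq:defannealedgaussian} is equivalent to
\[
    e^{-u^{s}} = \int_{(0,\infty)} e^{-\lambda u}\, d\mu_q(\lambda), \qquad u \geq 0 .
\]
For $q = 2$ (i.e. $s = 1$) this holds trivially with $\mu_2 = \delta_{1}$, so from now on $q \in (0,2)$, i.e. $s \in (0,1)$.

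The first step is to establish that $u \mapsto e^{-u^{s}}$ is completely monotone on $(0,\infty)$. I would use the composition rule: $t \mapsto e^{-t}$ is completely monotone, the map $t \mapsto t^{s}$ is a Bernstein function for $s \in (0,1)$ (its derivatives satisfy $(-1)^{n-1}\frac{d^{n}}{dt^{n}}t^{s}\geq 0$ for $n\geq 1$, or, more conceptually, it admits the L\'evy--Khintchine representation $t^{s} = \frac{s}{\Gamma(1-s)}\int_{0}^{\infty}(1 - e^{-t\lambda})\lambda^{-1-s}\,d\lambda$), and the composition of a completely monotone function with a Bernstein function is again completely monotone. An equivalent route is to recognise $e^{-t^{s}}$ directly as the Laplace transform of the one-sided $s$-stable law (the value at time $1$ of the $s$-stable subordinator), which is exactly the content of the Penson--G\'orska formula cited in the statement; this already exhibits the measure $\mu_q$.

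The second step is to apply the Hausdorff--Bernstein--Widder theorem: a completely monotone function $h$ on $(0,\infty)$ can be written as $h(u) = \int_{[0,\infty)} e^{-\lambda u}\,d\nu(\lambda)$ for a unique positive Borel measure $\nu$. Applying this to $h(u) = e^{-u^{s}}$ furnishes a measure $\mu_q := \nu$ on $[0,\infty)$ satisfying the displayed identity, and it only remains to pin down its total mass and support. Letting $u \downarrow 0$ and using monotone convergence gives $\mu_q([0,\infty)) = h(0^{+}) = 1$, so $\mu_q$ is a probability measure; letting $u \to \infty$ and using dominated convergence gives $\mu_q(\{0\}) = \lim_{u\to\infty} h(u) = 0$, so $\mu_q$ is supported on $(0,\infty)$. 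This proves the proposition.

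The only genuinely non-elementary ingredient is the complete monotonicity of $e^{-u^{s}}$ (equivalently, the Bernstein property of $t \mapsto t^{s}$ for $s \in (0,1)$); the substitution, the two limiting evaluations identifying $\mu_q$ as a Borel probability measure on $(0,\infty)$, and the appeal to Bernstein--Widder are all routine. Alternatively, since the paper already cites Penson--G\'orska for the explicit density of $\mu_q$, one could simply take that formula as given and verify by a (slightly tedious but elementary) Mellin-transform or contour-integral computation that its Laplace transform equals $e^{-u^{s}}$, but the Bernstein-function argument is shorter and avoids any explicit special-function manipulation.
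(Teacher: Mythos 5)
Your argument is correct. The substitution $u=x^2$ does reduce~\eqref{eq:defannealedgaussian} to the Laplace-transform identity $e^{-u^{s}}=\int e^{-\lambda u}\,d\mu_q(\lambda)$ with $s=q/2\in(0,1]$; the function $u\mapsto e^{-u^{s}}$ is completely monotone (completely monotone composed with the Bernstein function $t\mapsto t^{s}$), Hausdorff--Bernstein--Widder then gives a positive measure on $[0,\infty)$, and your two limiting evaluations (monotone convergence as $u\downarrow 0$ for total mass $1$, dominated convergence as $u\to\infty$ to kill a possible atom at $0$) correctly identify it as a Borel probability measure on $(0,\infty)$. The paper itself does not prove the proposition: it simply cites Penson--G\'orska, i.e.\ it takes the explicit one-sided $q/2$-stable density $g_q$ as known. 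So your route is a genuinely self-contained alternative: it buys a short, special-function-free existence proof of exactly the stated fact, and your identification of $\mu_q$ as the one-sided $q/2$-stable law is consistent with the paper's later use of the Laplace transform $L_\lambda(t)=e^{-t^{q/2}}$ in Section~\ref{sec:lowerbound-q-SOS}. What the citation-based route buys instead is the explicit density and its small- and large-argument asymptotics, which the proposition does not assert but which the paper does rely on afterwards (the tail estimates of Remark~\ref{remark2.13} and the bound $\mathbf{\tilde E}_q[\lambda^{-1}]<\infty$ in Section~\ref{sec:upperbound-q-SOS}); your soft argument would not by itself supply those, so it proves the proposition but could not replace the reference everywhere it is used.
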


\begin{remark} \label{remark2.13}
    \begin{itemize}
    \item In the case of the integer-valued Gaussian free field $(q = 2)$, we have $\mu_2 = \delta_1$ (the Dirac measure at $1$).
    \item For $q \in (0,2)$, the measure $\mu_q$ is uniquely characterized by~\eqref{eq:defannealedgaussian}, it is absolutely continuous with respect to the Lebesgue measure, i.e., $d\mu_q (\lambda) = g_q(\lambda) d\lambda$, and the function $g_q$ has the following features (see~\cite{mikusinski1959function})
    \begin{equation*}
        g_q(\lambda) \sim_{\lambda \to 0} K_q x^{\frac{2 + q/2}{2 - q}} \exp \left( - A_q \lambda^{-\frac{q}{2-q}} \right) ~~\mbox{and} ~~ g_q(\lambda) \sim_{\lambda \to \infty} M_q \lambda^{-1 - \frac{q}{2}},
    \end{equation*}
    for some explicit constants $K_q, A_q, M_q \in (0 , \infty).$ These properties imply the following tail estimates on the measure $\mu_q$: there exists a constant $C := C(q) < \infty$ such that
    \begin{equation*}
        \forall K \in (1 , \infty), ~\mu_q\left(\left[0 , \frac{1}{K}\right] \right) \leq C \exp \left( - \frac{K^{\frac{q}{2-q}}}{C} \right) ~~\mbox{and}~~ \mu_q\left(\left[K , \infty \right) \right) \leq \frac{C}{K^{\frac{q}{2}}}. 
    \end{equation*}
    \item In the case of the SOS-model ($q=1$), the following formula holds
    \begin{equation*}
g_1(x) = \frac{e^{-1/4x}}{2x\sqrt{\pi x}}  .
  \end{equation*}
    \end{itemize}
\end{remark}

\subsubsection{The FKG inequality}
In this section, we state the standard FKG inequality for collections of independent real-valued random variables.

\begin{proposition}[Fortuin-Kasteleyn-Ginibre inequality] \label{prop.FKGinequality}
For $n \in \N$, let $\lambda_1 , \ldots, \lambda_n$ be independent nonnegative random variables, and let $f , g : \R^n \to [0 , \infty)$ be increasing (resp.\ decreasing) functions (see~\eqref{def.increasingfunction}). Then one has the inequality
\begin{equation} \label{eq:ineqFKG}
    \E \left[ f(\lambda_1 , \ldots, \lambda_n ) g(\lambda_1 , \ldots, \lambda_n ) \right] \geq  \E \left[ f(\lambda_1 , \ldots, \lambda_n ) \right]  \E \left[ g(\lambda_1 , \ldots, \lambda_n ) \right].
\end{equation}
\end{proposition}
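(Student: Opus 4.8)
The plan is to prove Proposition~\ref{prop.FKGinequality} by induction on the number $n$ of independent random variables, treating the increasing case in detail (the decreasing case being entirely symmetric: replace ``increasing'' by ``decreasing'' throughout, the sign bookkeeping below being unchanged). Before starting the induction I would reduce to bounded $f,g$: for $M \in (0,\infty)$ the truncations $f_M := \min(f,M)$ and $g_M := \min(g,M)$ remain nonnegative and increasing, and $f_M \uparrow f$, $g_M \uparrow g$, $f_M g_M \uparrow fg$ pointwise, so once~\eqref{eq:ineqFKG} is known for all such $f_M,g_M$ the monotone convergence theorem (applied to each of the three expectations) delivers it for $f,g$, the inequality being read in $[0,\infty]$. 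From here on I assume $f,g$ bounded, so that all expectations in the argument are finite.

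For the base case $n=1$ I would introduce an independent copy $\lambda_1'$ of $\lambda_1$ and use the classical observation that $\bigl(f(\lambda_1)-f(\lambda_1')\bigr)\bigl(g(\lambda_1)-g(\lambda_1')\bigr) \geq 0$ pointwise, since $f$ and $g$ increasing forces the two factors to share a sign. Taking expectations and expanding using the independence and the equality in law of $\lambda_1$ and $\lambda_1'$ gives $2\,\E[f(\lambda_1)g(\lambda_1)] \geq 2\,\E[f(\lambda_1)]\,\E[g(\lambda_1)]$, which is the claim.

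For the inductive step, assuming the statement for $n-1$ variables, I would set, for $t \in \R$,
\[
F(t) := \E\bigl[f(\lambda_1,\ldots,\lambda_{n-1},t)\bigr], \qquad G(t) := \E\bigl[g(\lambda_1,\ldots,\lambda_{n-1},t)\bigr],
\]
and argue in two moves. First, for each fixed $t$ the maps $(y_1,\ldots,y_{n-1}) \mapsto f(y_1,\ldots,y_{n-1},t)$ and the analogous one for $g$ are increasing on $\R^{n-1}$, so the inductive hypothesis applied to $\lambda_1,\ldots,\lambda_{n-1}$ gives $\E[f(\lambda_1,\ldots,\lambda_{n-1},t)\,g(\lambda_1,\ldots,\lambda_{n-1},t)] \geq F(t)G(t)$ for every $t$; integrating this in the law of $\lambda_n$, which is independent of $(\lambda_1,\ldots,\lambda_{n-1})$, yields $\E[fg] \geq \E[F(\lambda_n)G(\lambda_n)]$. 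Second, since $f$ and $g$ are increasing in their last coordinate, $F$ and $G$ are increasing functions of a single real variable, so the base case applied to $\lambda_n$ and to $F,G$ gives $\E[F(\lambda_n)G(\lambda_n)] \geq \E[F(\lambda_n)]\,\E[G(\lambda_n)] = \E[f]\,\E[g]$. Chaining the two inequalities closes the induction. I do not expect a serious obstacle: the only delicate points are the measurability of the coordinatewise-monotone $f,g$ (automatic) and the fact that taking conditional expectations preserves monotonicity in the leftover variable (immediate from independence), while integrability is neutralised once and for all by the truncation step.
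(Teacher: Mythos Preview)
Your argument is correct: the truncation-then-induction proof, with the $n=1$ case handled by the independent-copy trick and the inductive step by conditioning on the last coordinate, is the standard route to this inequality. The paper does not supply its own proof of Proposition~\ref{prop.FKGinequality}; it is quoted as a classical fact, so there is nothing to compare against. One minor quibble: measurability of a function that is monotone for the product partial order on $\R^n$ is not literally automatic for $n\geq 2$, but since the statement already writes $\E[f(\lambda_1,\ldots,\lambda_n)]$ this is implicitly assumed and does not affect your argument.
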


\begin{remark}
    The inequality is stated for increasing functions depending on finitely many independent random variables but also holds for increasing functions depending on infinitely many independent random variables.
\end{remark}

\subsection{Convention for constants}
Throughout this article, the symbols $c$ and $C$ denote positive
constants which may vary monotonically from line to line, with $C$ increasing larger than $1$ and $c$ decreasing smaller than $1$. In Section~\ref{sec:section3}, they may only depend on the range exponent $\alpha$ (and will sometimes depend on the inverse temperature $\beta$ in which case this dependence will be made explicit and we will write $C_\beta$ and $c_\beta$ instead of $C$ and $c$). In Section~\ref{sec.section4qSOS}, we allow these constants to depend on the range exponent $\alpha$, the exponent $q$ and the inverse temperature~$\beta$.

\section{Delocalisation for the integer-valued Gaussian long-range chain} \label{sec:section3}

This section is devoted to the proof of Theorem~\ref{thm.gaussian} and is split into two subsections: Section~\ref{sec:lowerboundGaussian} is devoted to the lower bounds
and Section~\ref{sec:upperboundGaussian} is devoted to the upper bounds.
Each subsection is split into several cases, depending on the value of the exponent~$\alpha$.
We fix a large integer $N \in \N$ throughout the proof.

\subsection{The lower bounds} \label{sec:lowerboundGaussian}

\begin{figure} 
\begin{center}
\includegraphics[height =23cm, width=18cm]{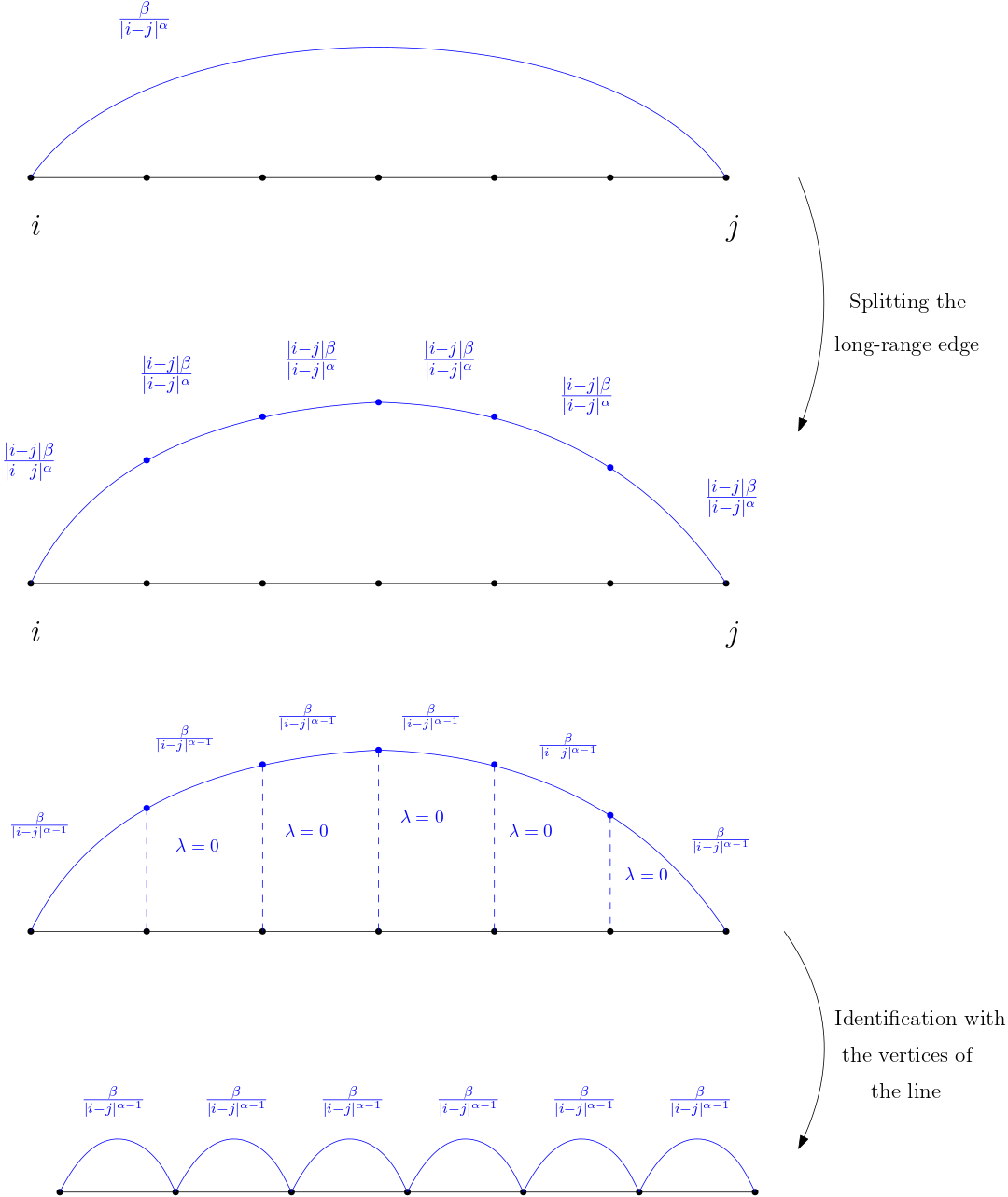}
\caption{Proof of the delocalisation when $\alpha > 3$} \label{Delocalphalarger3.picture}
\end{center}
\end{figure}

\subsubsection{The case $\alpha > 3$} \label{sec:lowerboundGaussianalpha>3}

\medskip

In this setting, we consider two integers $i , j \in \Z$ such that $|i - j| > 1$ and perform the following operations which all reduce the variance of the height $\varphi(0)$ (we refer to Figures~\ref{Delocalphalarger3.picture} and~\ref{Delocalphalarger32.picture} for guidance):
\begin{itemize}
    \item We first consider the edge connecting the vertices $i$ and $j$. This edge has a conductance equal to $\beta |i - j|^{-\alpha}$. Using Proposition~\ref{corollary:splittingintoNparts}, we add $(|i - j| - 1)$ vertices on this edge and multiply the conductances by $|i - j|$ (N.B. this operation reduces the variance of the height $\varphi(0)$ by Proposition~\ref{corollary:splittingintoNparts}). We denote by $i_1 , \ldots , i_{|i - j| - 1}$ the new vertices. The conductance of any edge incident to one of the vertices $i_1 , \ldots , i_{|i - j| - 1}$ is thus equal to $\beta |i - j|^{1-\alpha}$. We then apply Proposition~\ref{corollary.identification} to identify the vertex $i_k$ with $i + k$, for any $k \in \{ 1 , \ldots, |i - j| - 1 \}.$ These operations have the effect of erasing the edge $ij$ while increasing the values of the nearest neighbour conductances between $i$ and $j$ by the additive constant $\beta |i - j|^{1-\alpha}$.
    \item We then iterate this operation with all pairs of non-nearest neighbour vertices $i , j \in \Z$.
\end{itemize}
After performing all these operations, we obtain a nearest neighbour Gaussian chain on $\{ -N , \ldots, N\}$. We next consider a vertex $i \in \{ -N, \ldots, N - 1 \}$ and its right-neighbour $i+1$. After performing the operations mentioned above, the value of the conductance on the edge $i (i +1)$ is equal to 
\begin{equation*}
 \beta \sum_{j \leq i} \sum_{j' \geq i + 1} \frac{1}{\left| j - j'\right|^{\alpha - 1}}.
\end{equation*}
We next note that, for any fixed $j \leq i$ and since $\alpha - 1 > 2$, the following upper bound holds
\begin{equation*}
     \sum_{j' \geq i + 1} \frac{1}{\left| j - j'\right|^{\alpha - 1}} \leq \frac{C}{\left| i - j +1\right|^{\alpha -2}}.
\end{equation*}
Since $\alpha - 2 > 1$, we obtain
\begin{equation*}
    \sum_{j \leq i} \sum_{j' \geq i + 1} \frac{1}{\left| j - j'\right|^{\alpha - 1}} \leq C \sum_{j \leq i} \frac{C}{\left| i - j +1\right|^{\alpha -2}} = C \sum_{j = 1}^{\infty} \frac{C}{\left| j \right|^{\alpha -2}} \leq C.
\end{equation*}
The conductances of the nearest neighbour Gaussian chain are thus bounded uniformly in the parameter $N$. We finally perform a final operation on the chain (for which we refer to Figure~\ref{Delocalphalarger32.picture}): by Proposition~\ref{corollary.identification}, we identify the vertex $i$ with the vertex $-i$ for any $i \in \{ 1 , \ldots , N\}$. After performing this operation, we obtain a line of length $N$ on which all the conductances are bounded uniformly in $N$ (see Figures~\ref{Delocalphalarger32.picture} and~\ref{Delocalphalarger33.picture}).

\begin{figure} 
\begin{center}
\includegraphics[width=12cm]{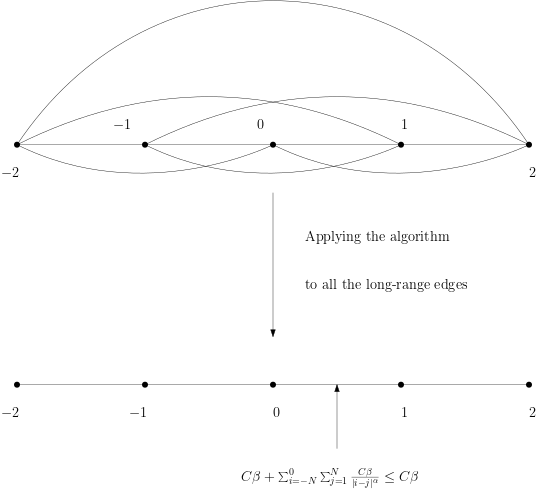}
\caption{Second part of the proof when $\alpha > 3$: we apply the algorithm to every long-range edge reduces the problem to a nearest neighbour Gaussian chain with finite conductances} \label{Delocalphalarger32.picture}
\end{center}
\end{figure}

At this stage, the variance of $\varphi(0)$ can be easily estimated using the identity~\eqref{eq:varianceforthechain} of Remark~\ref{rem:partitionfunctionmonotone} (with $\lambda = C \beta$). We thus have
\begin{equation*}
    \mathrm{Var}_{N , \beta , \alpha} \left[ \varphi(0) \right] \geq \mathrm{Var}_{N , C \beta} \left[ \varphi(0) \right] = c_\beta N.
\end{equation*}

\begin{figure} 
\begin{center}
\includegraphics[width=12cm]{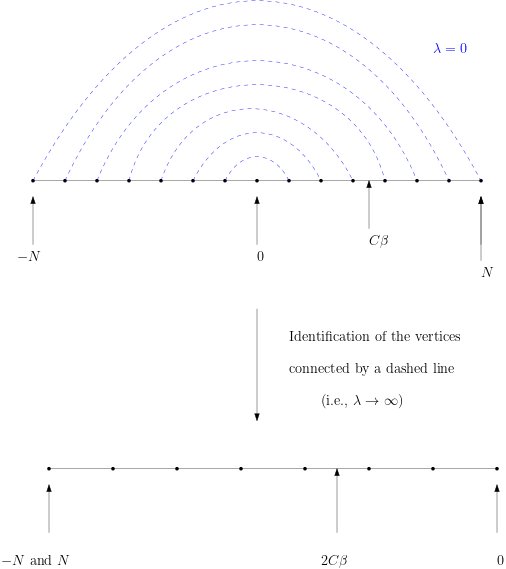}
\caption{Last step of the proof in the case $\alpha > 3$: we fold the nearest neighbour Gaussian chain to reduce the problem to a sum of independent random variables with finite variance.} \label{Delocalphalarger33.picture}
\end{center}
\end{figure}

\medskip

\subsubsection{The case $\alpha \in (2 , 3)$} \label{sec:section312}

\medskip

The argument in this setting is a refinement of the argument in the case $\alpha > 3$. We first set $\gamma := 3 - \alpha \in (0 , 1)$ and split the interval $\{ -N , \ldots, N \}$ into approximately $2N / N^\gamma = 2N^{\alpha -2}$ intervals of size $N^{\gamma}$. Specifically, we let $K := \lceil N^{\alpha -2} \rceil$, and, for any $k \in \{ -K , \ldots , K\}$, denote by
\begin{equation*}
    i_k := k \lceil N^\gamma \rceil.
\end{equation*}
For every pair of integers $i , j \in \Z$ such that $j > i$ and $|i - j| > 1$, we perform the following operations (we refer to Figures~\ref{Delocalphasmallerthan3.picture},~\ref{Delocalphasmallerthan31.picture} and~\ref{Delocalphasmallerthan32.picture} for guidance):
\begin{itemize}
    \item We consider the edge connecting the vertices $i$ and $j$. This edge has a conductance equal to $\beta |i - j|^{-\alpha}$. We then consider the number
    \begin{equation*}
            n_{ij} := \left| \left\{ i +1 , \ldots, j-1 \right\} \cap \{ i_{-K} , \ldots, i_K \} \right|.
    \end{equation*}
    In words, this is the number of vertices of the form $i_k$ which are strictly between $i$ and $j$ (N.B. this number can be equal to $0$). Note that we always have $n_{ij} \leq (2K+1)$ and that, if $n_{ij} \geq 2$, then 
    \begin{equation} \label{eq:estnij}
        c \frac{|i-j|}{N^\gamma} \leq n_{ij} \leq  \frac{ C |i-j|}{N^\gamma}
    \end{equation}
    (N.B. the lower bound is not true for $n_{ij} = 0$, and the upper bound is not true for $n_{ij} = 1$ as two vertices may be close to each other but can be placed on the line $\Z$ such that there is a vertex of the set $\{ i_0 , \ldots, i_K \}$ between them).
    If $n_{ij} = 0$, then we stop the procedure here. If $n_{ij} > 0$, then we use Proposition~\ref{corollary:splittingintoNparts} to add $n_{ij}$ vertices on this edge and multiply the conductances by $n_{ij}+1$. We denote by $j_1 , \ldots , j_{n_{ij}}$ the new vertices. The conductance of any edge incident to one of the vertices $j_1 , \ldots , j_{n_{ij}}$ is thus equal to $\beta (n_{ij}+1) |i - j|^{-\alpha}$. We then apply Proposition~\ref{corollary.identification} and, for any $k \in \{2 , \ldots, n_{ij} \}$, identify the vertex $j_k$ with the $k$-th vertex in the intersection $\left\{ i +1 , \ldots, j -1 \right\} \cap \{ i_{-K} , \ldots, i_K \}$.
    \item We then iterate this operation with all pairs of non-nearest neighbour vertices $i , j \in \{ -N , \ldots, N\}$.
\end{itemize}
The graph obtained after performing these operations take the form of the one depicted on Figure~\ref{Delocalphasmallerthan31.picture} and satisfies the following crucial property: all the long-range interactions lie inside the intervals $$\{ i_{-K} , \ldots, i_{-K +1} \},\ldots, \{ i_{K-1} , \ldots, i_K \}.$$

\begin{figure} 
\begin{center}
\includegraphics[height =16cm, width=14cm]{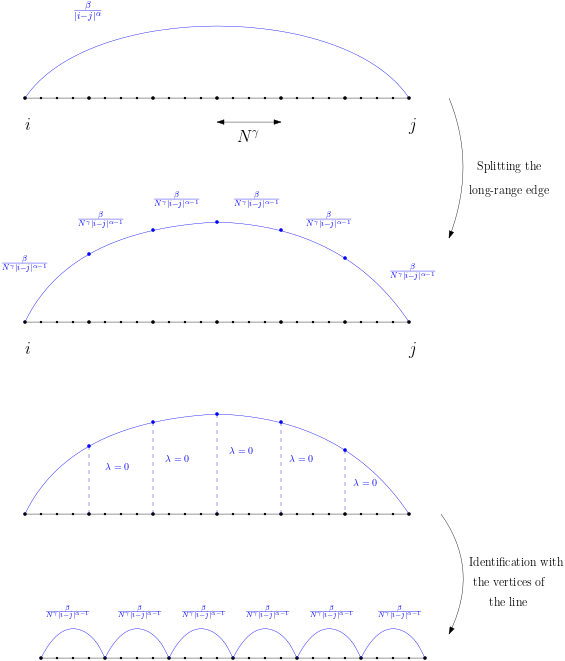}
\caption{First step of the proof in the case $\alpha \in (2 , 3)$. The long-range edges are split when the interval $\{ i , \ldots, j\}$ contains vertices of the form $k \lceil N^\gamma \rceil$ for $k \in \Z$.} \label{Delocalphasmallerthan3.picture}
\end{center}
\end{figure}

\begin{figure} 
\begin{center}
\includegraphics[width=11cm]{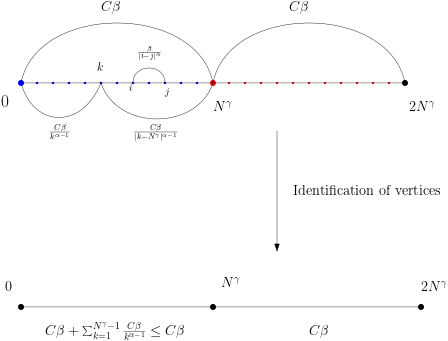}
\caption{Last step of the proof in the case $\alpha \in (2 , 3)$: we identify all the vertices in the same interval to reduce the problem to a nearest neighbour Gaussian chain. In the picture, all the blue vertices are identified together. Similarly, all the red vertices are identified together. These operations generate a graph containing two vertices drawn at the bottom} \label{Delocalphasmallerthan31.picture}
\end{center}
\end{figure}

\begin{figure} 
\begin{center}
\includegraphics[width=12cm]{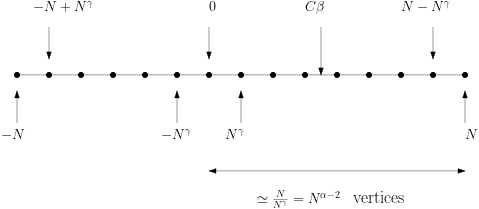}
\caption{Last step of the proof in the case $\alpha \in (2 , 3)$: the nearest neighbour Gaussian chain has finite conductance and its length is of order $N^{\alpha - 2}$.} \label{Delocalphasmallerthan32.picture}
\end{center}
\end{figure}
We next estimate the value of the conductances in each interval of the form $\{ i_{-k} , \ldots, i_{-k +1} \}$. To ease the notation, we will write the proof in the case of the interval $\{ i_{0} , \ldots, i_{1}  \} = \{ 0 , \ldots, \lceil N^\gamma \rceil \} $. We start with the (most important) conductance (see Figure~\ref{Delocalphasmallerthan31.picture}): the one associated with the edge connecting the two extremal sites $0$ and $\lceil N^\gamma \rceil$ of the interval $\{ 0 , \ldots, \lceil N^\gamma \rceil \}$. The value of this conductance is given by the formula
\begin{equation*}
     \beta \sum_{i \leq 0} \sum_{j \geq \lceil N^\gamma \rceil} \frac{(n_{ij}+1)}{|i - j|^\alpha}.
\end{equation*}
We then estimate the previous display by splitting the sum over the integers $j\in \{ \lceil N^\gamma \rceil, \ldots, N \}$ (for which the inequality~\eqref{eq:estnij} can be applied, noting that, since the distance between $i$ and $j$ is larger than $N^\gamma$, the upper bound holds even when $n_{ij} = 1$) and the integers $j \geq N$ (for which we use the bound $n_{ij} \leq (2K+1)$ with $K \simeq N^{\alpha -2}$). We obtain, for any $i \in \{ - N , \ldots, 0\}$, 
\begin{align} \label{eq:ineqnijestimate}
    \sum_{j \geq \lceil N^\gamma \rceil} \frac{(n_{ij}+1)}{|i - j|^\alpha} & \leq \frac{C}{N^\gamma} \sum_{j = \lceil N^\gamma \rceil}^N \frac{1}{|i - j|^{\alpha-1}} + C K \sum_{j \geq N} \frac{1}{|i - j|^{\alpha}} \\
    & \leq \frac{C}{N^\gamma}  \frac{1}{|i - N^\gamma|^{\alpha-2}} + \frac{C K}{N^{\alpha-1}} \notag \\
    & \leq \frac{C}{N^\gamma}  \frac{1}{|i - N^\gamma|^{\alpha-2}} + \frac{C}{N} \notag \\
    & \leq \frac{C}{N^\gamma}  \frac{1}{|i - N^\gamma|^{\alpha-2}}, \notag
\end{align}
where we used $N^{\gamma(\alpha -1)} \leq N$ in the last inequality.
Similarly, for any $i \leq -N$ (using this time only the bound $n_{ij} \leq (2K+1)$),
\begin{align*}
    \sum_{j \geq \lceil N^\gamma \rceil} \frac{(n_{ij}+1)}{|i - j|^\alpha} & \leq C K \sum_{j = \lceil N^\gamma \rceil}^{|i|} \frac{1}{|i|^\alpha} + C K \sum_{j \geq |i|} \frac{1}{|j|^\alpha} \\
    & \leq \frac{C K}{|i|^{\alpha-1}}.
\end{align*}
We finally sum over the negative integers and obtain
\begin{align*}
    \sum_{i \leq 0} \sum_{j \geq \lceil N^\gamma \rceil} \frac{(n_{ij}+1)}{|i - j|^\alpha} & \leq \frac{C}{N^\gamma} \sum_{i = - N}^0 \frac{1}{|i - N^\gamma|^{\alpha-2}}  + C K \sum_{i \leq - N} \frac{1}{|i|^{\alpha-1}}\\
    & \leq   \frac{C}{N^\gamma}  \sum_{i = - \lceil N^\gamma \rceil}^{0} \frac{1}{N^{\gamma(\alpha-2)}} +  \frac{C}{N^\gamma}  \sum_{i = -N }^{- \lceil N^\gamma \rceil} \frac{1}{|i|^{\alpha-2}} + C K \sum_{i \leq - N} \frac{1}{|i|^{\alpha-1}} \\
    & \leq \frac{C}{N^{(\alpha - 2) \gamma}}  +  C N^{3-\alpha-\gamma} + \frac{CK }{ N^{\alpha-2}} \\
    & \leq C,
\end{align*}
where in the second inequality we used that $|j - N^\gamma| \geq N^\gamma$ for any $j \leq 0$ and $|j - N^\gamma| \geq c|j|$ for any $j \leq - \lceil N^\gamma \rceil$, and in the last inequality, we used the definition $\gamma := 3-\alpha$.
This implies that the conductance of the edge connecting the extremal sites $0$ and $\lceil N^\gamma \rceil$ is bounded uniformly in~$N$.

We then fix an integer $k \in \{ 1 , \ldots, \lceil N^\gamma \rceil -1 \}$ and estimate the value of the conductance between $k$ and $\lceil N^\gamma \rceil$ (see Figure~\ref{Delocalphasmallerthan31.picture}). The value of this conductance is given by the identity
\begin{equation*}
    \beta \sum_{j \geq \lceil N^\gamma \rceil} \frac{(n_{kj}+1)}{|j - k|^\alpha}.
\end{equation*}
We estimate this term by splitting the sum over the integers $j \in \{ \lceil N^\gamma \rceil , \ldots , 2 \lceil N^\gamma \rceil\}$ (for which $n_{kj} = 1$), and the integers $j \in \{ 2 \lceil N^\gamma \rceil , \ldots , N \}$ (for which $n_{kj} \geq 2$ and~\eqref{eq:estnij} can be applied) and the integers $j \geq N$ (for which the inequality $n_{kj} \leq (2K+1)$ can be applied). We obtain
\begin{align} \label{eq:boundlinbkbound}
    \sum_{j \geq \lceil N^\gamma \rceil} \frac{(n_{kj}+1)}{|j - k|^\alpha} & \leq \sum_{j = \lceil N^\gamma \rceil}^{2 \lceil N^\gamma \rceil} \frac{C}{|j - k|^\alpha} + \frac{C}{N^\gamma} \sum_{j = 2 \lceil N^\gamma \rceil}^{N} \frac{1}{|j - k|^{\alpha-1}} + CK \sum_{j = N}^\infty \frac{1}{j^\alpha}  \\
    & \leq \frac{C}{\left| \lceil N^\gamma \rceil - k \right|^{\alpha -1}} +  \frac{C}{N^\gamma} \sum_{j = 2 \lceil N^\gamma \rceil}^{N} \frac{1}{|j|^{\alpha-1}} + CK \frac{1}{N^{\alpha-1}} \notag \\
    & \leq \frac{C}{\left| \lceil N^\gamma \rceil - k \right|^{\alpha -1}} + \frac{1}{N^\gamma} \frac{C}{N^{\gamma (\alpha-2)}} + \frac{C}{N} \notag \\
    & \leq \frac{C}{\left| \lceil N^\gamma \rceil - k \right|^{\alpha -1}}, \notag
\end{align}
where in the second line, we used that $|j - k| \geq c |j|$ when $j \geq 2 \lceil N^\gamma \rceil$ and in the last line, we used that $\left| \lceil N^\gamma \rceil - k \right| \leq N^\gamma$ and $N^{\gamma(\alpha -1)} \leq N$.

We finally use Proposition~\ref{corollary.identification} to identify all the vertices $\{ 0 , \ldots, \lceil N^\gamma \rceil - 1 \}$ together. This operation transforms the interval $\{ 0 , \ldots, \lceil N^\gamma \rceil \}$ into a graph containing two vertices connected by a conductance whose value is bounded by
\begin{equation*}
   C \beta + \sum_{k = 1}^{\lceil N^\gamma \rceil - 1} \frac{C \beta}{\left| \lceil N^\gamma \rceil - k \right|^{\alpha -1}} \leq C \beta
\end{equation*}
where we used that $\alpha -1 > 1$ (see Figures~\ref{Delocalphasmallerthan31.picture} and~\ref{Delocalphasmallerthan32.picture}). This conductance is thus bounded uniformly in $N$. Applying the same operation to all the intervals the form $\{ i_{-k} , \ldots, i_{-k +1} \}$ yields a nearest neighbour Gaussian chain of length $K \simeq N^{\alpha - 2}$ whose conductances are all bounded by $C \beta$. We may thus conclude as in the previous case that
\begin{equation*}
    \mathrm{Var}_{N , \beta , \alpha} \left[ \varphi(0) \right] \geq \mathrm{Var}_{K , C \beta} \left[ \varphi(0) \right] \geq c_\beta N^{\alpha - 2}.
\end{equation*}

\medskip

\subsubsection{The case $\alpha = 3$}

\medskip

This case is in fact similar to the case $\alpha \in (2 , 3)$, the main difference is that we partition the interval $\{ -N , \ldots, N \}$ into approximately $2 N / \ln N$ intervals of size $ \ln N$ (instead of $2 N/ N^\gamma$ intervals of size $N^\gamma$). We thus only point out the main differences with the proof of Section~\ref{sec:section312}. For the conductance of the edge connecting the two extremities $0$ and $\lceil \ln N \rceil$ of the interval $\{ 0 , \ldots, \lceil \ln N \rceil \}$, we have the identity
\begin{equation*}
   \beta \sum_{i \leq 0} \sum_{j \geq \lceil \ln N \rceil} \frac{(n_{ij}+1)}{|i - j|^3}.
\end{equation*}
This term can then be estimated as follows: for any $i \in \{ - N , \ldots, 0\}$,
\begin{equation*}
    \sum_{j \geq \lceil \ln N \rceil} \frac{(n_{ij}+1)}{|i - j|^3} \leq \frac{C}{\ln N} \frac{1}{|i - \ln N|},
\end{equation*}
and for any $i \leq -N$,
\begin{equation*}
    \sum_{j \geq \lceil \ln N \rceil} \frac{(n_{ij}+1)}{|i - j|^3} \leq \frac{C N}{\ln N} \frac{1}{|i|^2}.
\end{equation*}
Summing over the negative integers, we deduce that
\begin{align*}
    \sum_{i \leq 0} \sum_{j \geq \lceil \ln N \rceil} \frac{(n_{ij}+1)}{|i - j|^3} & \leq \frac{C }{\ln N} \sum_{i = -N}^{ 0}\frac{1}{|i - \ln N|} + \sum_{i \leq -N} \frac{C N}{\ln N|i|^{2}} \\
    & \leq \frac{C}{\ln N} \sum_{i = - \lceil \ln N \rceil}^{0} \frac{1}{|\ln N|} + \frac{C}{\ln N} \sum_{i = -N}^{- \lceil \ln N \rceil } \frac{1}{|i|} + \frac{C N}{\ln N}  \sum_{i \leq -N} \frac{1}{|i|^{2}}\\
    & \leq C.
\end{align*}
The rest of the proof is then similar to the case $\alpha \in (2 , 3)$. Using the same computation as in~\eqref{eq:boundlinbkbound}, we obtain that, for any $k \in \{ 1 , \ldots, \lceil \ln N \rceil -1 \}$, the conductance between the vertices $k$ and $\lceil \ln N \rceil$ is upper bounded by the value $C \beta / ( \lceil \ln N \rceil - k)^2$.  We finally use Proposition~\ref{corollary.identification} to identify all the vertices $\{ 0 , \ldots, \lceil \ln N \rceil - 1 \}$ together. This operation transforms the graph $\{ 0 , \ldots, \lceil \ln N \rceil - 1 \}$ into a graph containing two vertices connected by a conductance whose value is bounded by $C \beta$.

Applying the same series of operations to all the intervals of the form $\{ i_{-k} , \ldots, i_{-k +1} \}$, we obtain a nearest neighbour Gaussian chain of length $N / \ln N$ whose conductances are all bounded by $C \beta$. We may thus conclude as in the previous cases.

\subsubsection{The case $\alpha = 2$ at high temperature}

In this section, we show that when the range exponent $\alpha$ is equal to $2$ and the inverse temperature $\beta$ is sufficiently small, the variance of the height $\varphi(0)$ grows at least like the logarithm of the length of the chain. We note that two proofs of this result have already been obtained: one by Kjaer and Hilhorst~\cite{KH82} (using a duality transformation for the Gaussian chain) and one by Garban~\cite{garban2023invisibility} (where the scaling limit of the chain is identified; in fact the proof written below shares strong similarities with his argument).

We will in fact prove an inequality which is valid for any inverse temperature $\beta$ and relates the variance of the integer-valued Gaussian chain with $\alpha = 2$ at inverse temperature $\beta$ to the one of a two-dimensional integer-valued Gaussian free field at inverse temperature $C_0\beta$ (for some explicit constant $C_0 > 1$, see Proposition~\ref{prop:prop4.1} below). This second model is known to undergo a localisation/delocalisation phase transition~\cite{frohlich1981kosterlitz, kharash2017fr, lammers2022height, van2023elementary, aizenman2021depinning}. Combined with Proposition~\ref{prop:prop4.1}, these results imply the delocalisation of the Gaussian chain at high temperature.

We first introduce the version of the two-dimensional integer-valued Gaussian free field used in the statement and proof of Proposition~\ref{prop:prop4.1}. In order to state the definition, we fix an integer $N \in \N$ and introduce the notation:
\begin{itemize}
\item We let $\Lambda_N := \left\{ - N , \ldots , N \right\} \times \{ -2N , \ldots, 2N \} \subseteq \Z^2$ be a two-dimensional rectangle of short and long side lengths $N$ and $2N$. 
\item We let $\partial \Lambda_N := \left\{ x \in \Z^2 \setminus \Lambda_N \, : \, \exists y \in \Lambda_L, x \sim y \right\}$ be the external vertex boundary of $\Lambda_L$ (writing $x \sim y$ to mean that $x , y \in \Z^2$ are nearest neighbours), and set $\Lambda_N^+ := \Lambda_N \cup \partial \Lambda_N$.
\item We let $\Omega_{\Lambda_N} := \left\{ \varphi : \Lambda_N^+ \to \Z \, : \, \varphi = 0 ~\mbox{on}~ \partial \Lambda_N  \right\}$ be the set of integer-valued height functions on the box $\Lambda_N$ with Dirichlet boundary condition.
\end{itemize}

\begin{definition}[Two-dimensional integer-valued Gaussian free field] \label{def:2DGFF}
Given an integer $N \in \N$ and an inverse temperature $\beta$, we define the two dimensional integer-valued Gaussian free field at inverse temperature~$\beta \in (0 , \infty)$ to be the probability distribution on $\Omega_{\Lambda_N}$ given by Definition \ref{def:IV-GFF} where the graph is $\Lambda_N$, the vertices of $\partial\Lambda_N$ are identified and assigned the role of root, and the conductances all equal $\beta$. We denote by $\mathrm{Var}_{N , \beta}^{\mathrm{2D-IVGFF}}$ the variance with respect to this measure.
\end{definition}

We are now able to state the main result of this section.

\begin{proposition} \label{prop:prop4.1}
	There exists a constant $C_0 \in (1 , \infty)$ such that, for any inverse temperature $\beta \in (0, \infty)$,
	\begin{equation} \label{eq:reductionofdimension}
	\mathrm{Var}_{N , C_0 \beta}^{\mathrm{2D-IVGFF}} \left[ \varphi(0) \right] \leq \mathrm{Var}_{N , \beta , 2} \left[ \varphi(0) \right].
	\end{equation}
\end{proposition}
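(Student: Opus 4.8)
The goal is to compare the one–dimensional chain with $\alpha = 2$ to a two–dimensional integer-valued GFF on the box $\Lambda_N$ by realising the latter as the result of graph surgery performed on the former (recall that, per the strategy outlined above, erasing edges and identifying vertices both \emph{increase} the variance of $\varphi(0)$, while splitting an edge into several and inflating conductances \emph{decreases} it). The plan is to start from the graph $G_N$ whose vertices are $\{-N,\dots,N\}$ with conductances $\lambda_{ij} = \beta/|i-j|^2$, and to transform it, via operations (i)--(iii) of the introduction, into a graph which \emph{contains} (after further edge deletions and vertex identifications, which only help) a copy of the two-dimensional box $\Lambda_N$ with all nearest-neighbour conductances equal to $C_0\beta$.

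The key mechanism is the one already used for the lower bounds in the case $\alpha>3$: a long-range edge $ij$ with conductance $\beta|i-j|^{-\alpha}$ can, by Proposition~\ref{corollary:splittingintoNparts}, be split into $|i-j|$ nearest-neighbour edges of conductance $\beta|i-j|^{1-\alpha}$, and then (Proposition~\ref{corollary.identification}) the intermediate vertices can be identified with the sites $i+1,\dots,j-1$ already present on the line. The net effect is to add $\beta|i-j|^{1-\alpha}$ to every nearest-neighbour conductance between $i$ and $j$. Doing this for all pairs turns $G_N$ into the nearest-neighbour line with conductance on the edge $i(i+1)$ equal to $\beta\sum_{j\le i}\sum_{j'\ge i+1}|j-j'|^{1-\alpha}$; for $\alpha = 2$ this double sum is $\sum_{j\le i}\sum_{j'\ge i+1}|j-j'|^{-1}$, which diverges logarithmically. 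More precisely, one has $\sum_{j\le i}\sum_{j'\ge i+1}|j-j'|^{-1} \ge c\ln(\min(i+N+1, N-i))$, so the surgically produced nearest-neighbour line has conductances which are roughly $\beta\ln N$ away from the center and do not see the boundary. The point is then that the two-dimensional GFF on $\Lambda_N$ can itself be reduced, by the very same surgery operations, to a one-dimensional object whose conductances are \emph{at most} of the same logarithmic order — one collapses each ``column'' $\{-N,\dots,N\}\times\{t\}$ to a point by identification, turning the $2D$ box into a nearest-neighbour segment of length $4N$ in the long direction whose conductances encode the effective resistance across each column, which for the $N\times(2t)$-type sub-box is $\asymp 1/\ln$ of the relevant scale; inverting, the effective conductance a distance $t$ from the boundary is $\asymp\beta\ln(\min(t,N))$, matching the $1D$ chain up to the multiplicative constant $C_0$. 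Choosing $C_0$ large enough absorbs all constants, and since enlarging conductances decreases $\mathrm{Var}[\varphi(0)]$ (Proposition~\ref{prop:monotonicityconductances}) while the surgery path from $G_N$ only decreases it further, one lands on the claimed inequality~\eqref{eq:reductionofdimension}.

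The main obstacle is making the matching of conductance profiles rigorous and in the correct direction: one needs the $1D$ chain obtained from $G_N$ by surgery to have conductances \emph{bounded below} by $C_0$ times those of the reduced $2D$ GFF, so that a final application of conductance monotonicity (increasing the conductances of the $2D$ side up to the $1D$ side) gives the variance comparison. This requires (a) a clean lower bound $\sum_{j\le i}\sum_{j'\ge i+1}|j-j'|^{-1}\ge c\ln(\mathrm{dist}(i,\{\pm N\}))$ for the $1D$ side, and (b) an upper bound of the same $\ln$-order for the column-collapsed $2D$ side, which amounts to a standard effective-resistance estimate for a long thin $2D$ box (the resistance across a width-$N$ slab of length $\ell$ is $\asymp \min(1/\ell, 1)\cdot$const, so the series resistance from the boundary to the center telescopes to $\asymp$const and the conductance profile is $\asymp\ln$). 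Both (a) and (b) are elementary but must be done carefully enough to see that the \emph{same} monotone function of distance controls both, up to a uniform multiplicative constant; everything else is an assembly of Propositions~\ref{prop:monotonicityconductances},~\ref{corollary.deletion.edges},~\ref{corollary.identification} and~\ref{corollary:splittingintoNparts} exactly as in the $\alpha>3$ argument.
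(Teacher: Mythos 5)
There is a genuine gap, and it is structural: your comparison runs through the two-dimensional GFF in the wrong monotonicity direction. Every operation you perform on the 2D side (identifying each column $\{-N,\dots,N\}\times\{t\}$ to a point) \emph{decreases} the variance, so it yields $\mathrm{Var}[\text{collapsed chain } A] \leq \mathrm{Var}_{N,C_0\beta}^{\mathrm{2D-IVGFF}}[\varphi(0)]$, i.e.\ a \emph{lower} bound on the quantity you must bound from \emph{above}. Your 1D surgery gives $\mathrm{Var}[\text{chain } B] \leq \mathrm{Var}_{N,\beta,2}[\varphi(0)]$, and your final conductance comparison between $A$ and $B$ gives $\mathrm{Var}[B] \leq \mathrm{Var}[A]$; chaining these yields only $\mathrm{Var}[B] \leq \mathrm{Var}[A] \leq \mathrm{Var}^{\mathrm{2D}}$ and $\mathrm{Var}[B] \leq \mathrm{Var}_{N,\beta,2}$, from which the claimed inequality $\mathrm{Var}^{\mathrm{2D}}_{N,C_0\beta} \leq \mathrm{Var}_{N,\beta,2}$ does not follow: both sides merely dominate the same intermediate object. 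To prove the proposition by surgery you must arrive at the 2D IV-GFF itself as the \emph{endpoint} of a chain of variance-decreasing operations started from the 1D chain (equivalently, go from the 2D box to the 1D chain using only variance-increasing operations); any step that degrades the 2D box into a 1D object forfeits exactly the information you need. The quantitative claims also fail: collapsing consecutive columns creates $\asymp N$ parallel edges of conductance $C_0\beta$, so the collapsed chain has nearest-neighbour conductances of order $N\beta$, not $\beta\ln(\cdot)$; its variance is far below the $\ln N$ of the 2D field (already for the real-valued field the shorted resistance to the boundary is $O(1/\beta)$), which is precisely why this route cannot see the 2D behaviour.

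The paper's proof keeps the two-dimensional structure by embedding the long-range edges into $\Lambda_N$ directly. For each pair $i<j$ with $l=j-i$, the single edge of conductance $\beta/l^2$ is subdivided (Proposition~\ref{prop.prop2.9} iterated) into a path of $3l$ edges carrying conductances $c_{k,l}=C_1\sqrt{k/l}\,\beta/l$ up the first third, $C_1\beta/l$ across the middle third, and symmetrically down the last third; the constant $C_1$ is chosen so that the series resistance of this path is at most $l^2/\beta$, which is exactly the condition~\eqref{eq:13410710} guaranteeing the subdivision decreases the variance. The new vertices are then identified (Proposition~\ref{corollary.identification}) with the staircase in $\Lambda_N$ going up column $i$ to height $l$, horizontally at height $l$, and down column $j$. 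Summing the contributions over all pairs, each horizontal edge of $\Lambda_N$ receives at most $l\cdot C_1\beta/l = C_1\beta$ and each vertical edge at most $2\sum_{n\geq l}c_{l,n}\leq 2C_1C_2\beta$ by~\eqref{eq:eq3.6}; raising every conductance to $C_0\beta := 2C_1C_2\beta$ is one more variance-decreasing step and lands exactly on the 2D IV-GFF of Definition~\ref{def:2DGFF}. If you want to salvage your plan, this is the shape it must take: the 2D box appears at the end of the variance-decreasing chain, never as something you simplify.
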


\begin{remark}
    As mentioned above, the result of Fr\"{o}hlich and Spencer~\cite{frohlich1981kosterlitz} (or to be precise, the result of Wirth~\cite[Proposition 8]{wirth2019maximum} who extends the techniques of~\cite{frohlich1981kosterlitz} to include the Dirichlet boundary condition considered here) implies that there exists an inverse temperature $\beta_c > 0$ and a constant $c > 0$ such that, for any $\beta \leq \beta_c$,
    \begin{equation*}
        \frac{c}{\beta} \ln N \leq \mathrm{Var}_{N , C_0 \beta}^{\mathrm{2D-IVGFF}} \left[ \varphi(0) \right].
    \end{equation*}
    Combining this inequality with~\eqref{eq:reductionofdimension}, we obtain that, for $\beta$ sufficiently small,
    \begin{equation*}
        \frac{c}{\beta} \ln N \leq \mathrm{Var}_{N , \beta , 2} \left[ \varphi(0) \right].
    \end{equation*}
\end{remark}

\begin{proof}
	We first introduce the following collection of non-negative numbers (N.B. the choice for these constants is not unique; they are chosen so that the inequalities~\eqref{eq:eq3.6} and~\eqref{eq:13410710} hold but any other choice satisfying these inequalities is admissible):
	\begin{equation*}
	\forall (k , l) \in \N \times \N ~\mbox{with}~ k \leq l, ~~ c_{k , l} := C_1 \sqrt{\frac{k}{l}} \frac{\beta}{l}>0,
	\end{equation*}
	where $C_1 \in (1 , \infty)$ is a constant chosen so that the following two inequalities are satisfied (the choice of the value of $C_1$ affects the second inequality):
	\begin{itemize}
		\item For any $k \in \N$,
		\begin{equation} \label{eq:eq3.6}
		\sum_{l = k}^\infty c_{k,l} = C_1 \beta \sqrt{k} \sum_{l = k}^\infty \frac{1}{l \sqrt{l}} \leq C_1 C_2 \beta,
		\end{equation}
        with $C_2 := \sup_{k \in \N} \sqrt{k} \sum_{l = k}^\infty \frac{1}{l \sqrt{l}} < \infty.$
		\item For any integer $l \in \N$,
		\begin{equation} \label{eq:13410710}
		\frac{l^2}{C_1} + 2 \sum_{k = 1}^l \frac{1}{c_{k,l}} = \frac{l^2}{C_1} + \frac{2}{C_1} l \sqrt{l} \sum_{k = 1}^l \frac{1}{\sqrt{k}} \leq \frac{l^2}{\beta}. 
		\end{equation}
	\end{itemize}
	We then perform a series of operations which reduce the variance of the height $\varphi(0)$, and map the Gaussian long-range chain (with $\alpha = 2$) to a two-dimensional integer-valued Gaussian free field.
	
	We first consider the two dimensional rectangle $\Lambda_N$ and identify the set $\left\{ -N , \ldots , N\right\}$ with the horizontal line $\left\{ -N , \ldots , N\right\} \times \{0 \} \subseteq \Lambda_N$. Then, for any pair of vertices $i , j \in \{ -N , \ldots, N \}$ with $i < j$, we add $3  (j - i) - 1 $ vertices on the edge connecting $i$ to $j$, denote these vertices by $x_1 , \ldots, x_{3 (j - i) - 1}$ and set $x_0 = i$ and $x_{3(j - i)} = j$ (see Figure~\ref{Splittialphacritical.picture}).
	
	We then assign the following conductances to the edges (of the form $(x_k, x_{k+1})$) created by the previous step (see Figure~\ref{alpha=2critical2.picture}):
	\begin{itemize}
		\item For $k \in \{ 1 , \ldots , (j - i) - 1\}$, we assign the conductance $c_{k ,  (j - i)}$ to the edge connecting $x_k$ and $x_{k+1}$;
		\item For $k \in \{ (j - i) , \ldots , 2 (j - i) - 1\}$, we assign the conductance $\frac{C_1 \beta}{(j - i)}$ to the edge connecting $x_k$ and $x_{k+1}$;
		\item For $k \in \{ 2 (j - i), \ldots , 3 (j - i) - 1\}$, we assign the conductance $c_{3 (j - i) - k , j-i}$ to the edge connecting $x_k$ and $x_{k+1}$.
	\end{itemize}
	Proposition~\ref{prop.prop2.9} and the inequality~\eqref{eq:13410710} ensure that performing these operations reduces the variance of the height $\varphi(0)$.
	We then identify the vertices $x_1 , \ldots, x_{3 (j - i) - 1}$ with the ones of the two dimensional box $\Lambda_N$ as follows (see Figure~\ref{alpha=2critical2.picture}):
	\begin{itemize}
		\item For $k \in \{ 1 , \ldots , (j - i)\}$, we identify the vertex $x_k$ with the vertex $(i , k) \in \Lambda_N$.
		\item For $k \in \{(j - i) , \ldots , 2 (j - i)\}$, we identify the vertex $x_k$ with the vertex $(i+k - (j - i) ,  (j - i) ) \in \Lambda_N$.
		\item For $k \in \{ 2(j - i) , \ldots , 3 (j - i)-1\}$, we identify the vertex $x_k$ with the vertex $(j , 3 (j - i) - k ) \in \Lambda_N$.
	\end{itemize}
	We then perform these operations on all pairs of vertices $i , j \in \{ -N , \ldots, N \}$ (N.B. when two vertices are identified to the same vertex of $\Lambda_N$, then they are identified together; see Figures~\ref{alpha=2critical3.picture},~\ref{alpha=2critical4.picture} and~\ref{alpha=2critical5.picture}). We then estimate the value of the conductances on each edge:
	\begin{itemize}
		\item On the horizontal edges (see Figure~\ref{alpha=2critical3.picture}): for $(k , l) \in \Lambda_N$, the conductance of the horizontal edge $\{ (k , l) , (k +1, l) \}$ receives a contribution of $\beta/l$ for each long-range edge of the form $(i , i+l)$ such that $i \leq k < i+l$. There are (at most) $l$ such contributions and we thus obtain
		\begin{equation*}
		c_{(k , l) , (k+1 , l)} \leq l \times \frac{C_1 \beta}{l} \leq C_1 \beta.
		\end{equation*}
		\item On the vertical edges: for $(k , l) \in \Lambda_N$, the conductance of the vertical edge  $\{ (k , l) , (k, l + 1) \}$, denoted by $c_{(k , l) , (k , l + 1)}$ below, receives a contribution from each long-range edge of the form $(k , n) \in \{-N , \ldots, N \} \times \{-N , \ldots, N \}$ with $|k-n| > l$, and each time the value of this contribution is $c_{l , |k - n|}$. We thus obtain
		\begin{equation*}
		c_{(k , l) , (k , l + 1)} \leq \sum_{\substack{n \in \{ - L , \ldots, L \} \\ |n - k| > l}} c_{l , |k - n|} \leq  2 \sum_{n = l}^\infty c_{l , n} \leq 2 C_1 C_2 \beta.
		\end{equation*}
	\end{itemize}
	
	We finally increase the value of all the conductances in the rectangle $\Lambda_{N}$ to $2 C_1 C_2 \beta$. This final operation reduces the value of the variance of $\varphi (0)$ (where $0$ now denotes the centre of the box $\Lambda_N$), and the model obtained after performing this final operation is the two dimensional integer-valued Gaussian free field in the rectangle $\Lambda_N$ with conductance equal to $2 C_1 C_2 \beta$. The proof of Proposition~\ref{prop:prop4.1} is thus complete (by setting $C_0 := 2 C_1 C_2$).
\end{proof}

\begin{figure} 
	\begin{center}
		\includegraphics[scale=0.6]{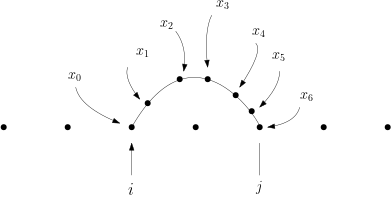}
		\caption{First step of the proof of Proposition~\ref{prop:prop4.1}. For $i , j \in \{-N , \ldots, N \}$ with $i < j$, $3 (j - i) - 1$ vertices are added to the edge connecting the vertices $i$ and $j$. On the figure, the value $j - i = 2$ was chosen and $5$ vertices are added to the edge connecting $i$ and $j$.} \label{Splittialphacritical.picture}
	\end{center}
\end{figure}

\begin{figure} 
	\begin{center}
		\includegraphics[scale=0.6]{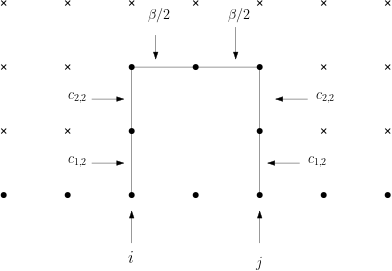}
		\caption{Second step of the proof of Proposition~\ref{prop:prop4.1}: conductances are assigned to the edges created at the first step and the vertices $x_1 , \ldots, x_{3 (j - i) - 1}$ are identified with vertices of the two-dimensional box $\Lambda_N$.} \label{alpha=2critical2.picture}
	\end{center}
\end{figure}

\begin{figure} 
	\begin{center}
		\includegraphics[scale=0.6]{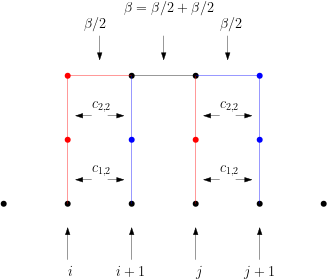}
		\caption{Contributions from the two long-range edges (drawn in red and blue) to the horizontal edge connecting the vertices $(0 , 2)$ and $(1,2)$ (drawn in black).} \label{alpha=2critical3.picture}
	\end{center}
\end{figure}

\begin{figure} 
	\begin{center}
		\includegraphics[scale=0.5]{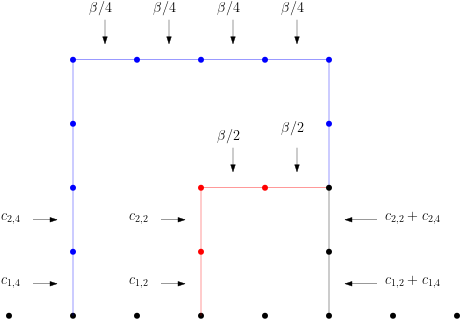}
		\caption{Contributions from two long-range edges (drawn in red and blue) to the two horizontal edges drawn in black.} \label{alpha=2critical4.picture}
	\end{center}
\end{figure}

\begin{figure} 
	\begin{center}
		\includegraphics[scale=0.6]{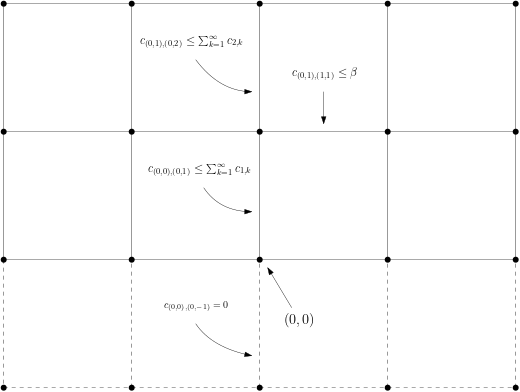}
		\caption{End of the proof of Proposition~\ref{prop:prop4.1}. The graph obtained is a two-dimensional box and the conductance of each edge is smaller than $C_0 \beta$ (and sometimes equal to $0$, as, for instance, for the edges in the bottom half of the box)} \label{alpha=2critical5.picture}
	\end{center}
\end{figure}

\subsubsection{The case $\alpha \in (1,2)$}

This case is the simplest: using the monotonicity of the variance in the length of the chain (as mentioned in Remark~\ref{rem:remarkmonotony}), we have
 \begin{equation*}
    c_\beta \leq \mathrm{Var}_{1 , \beta , \alpha} \left[ \varphi(0) \right] \leq \mathrm{Var}_{N , \beta , \alpha} \left[ \varphi(0) \right].
 \end{equation*}

\subsection{The upper bounds} \label{sec:upperboundGaussian}

This section is devoted to the proof of the upper bounds  of Theorem~\ref{thm.gaussian}. As in Section~\ref{sec:lowerboundGaussian}, we fix a large integer $N \in \N$ and split the proof into several cases, depending on the value of the exponent~$\alpha$.

\subsubsection{The case $\alpha > 3$} \label{sec:casealpha>3}

\medskip

This case is the simplest. Using Proposition~\ref{corollary.deletion.edges}, we may delete all the edges which are either non-nearest neighbour or whose endpoints are negative integers. The resulting graph is a line on which all the conductances take the value $\beta$. Applying the identity~\eqref{eq:varianceforthechain} of Remark~\ref{rem:partitionfunctionmonotone}, we obtain
\begin{equation*}
    \mathrm{var}_{N , \beta , \alpha} \left[ \varphi(0) \right]  \leq C_\beta N.
\end{equation*}

\medskip

\subsubsection{The case $\alpha = 3$ } \label{sec:casealpha=3}

\medskip

This case is more technical and a multiscale argument has to be implemented in order to identify the logarithmic correction. 

\begin{figure} 
\begin{center}
\includegraphics[height =17cm, width=14cm]{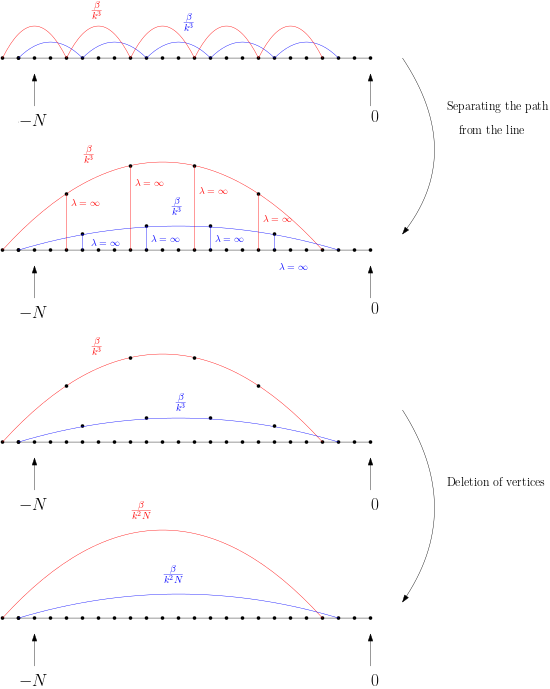}
\caption{First part of the proof of the upper bound when $\alpha = 3$. With the specific value $k =4$, the $2$ paths of the set $\Gamma_4$ are drawn (one in red and one in blue). We then isolate these two paths from the line $\mathbb{Z}$ using the monotonicity under identification of vertices. In the last step depicted, we use the monotonicity under addition/deletion of vertices to generate a long-range edge connecting the starting and end points of the paths considered.} \label{upperboundalpha=3firstpart.picture}
\end{center}
\end{figure}

We set $K := \lfloor \sqrt{N} \rfloor$. For each even integer $k \in \{ 2 , \ldots, \lfloor \sqrt{N} \rfloor \}$, we construct a collection of $(k/2 +1)$ paths starting from the interval $\{ - k , \ldots, - k/2 \}$ and making jumps of size $k$ to the left until they reach an integer smaller than $-N$. More precisely, for any integer $i \in \{ - k , \ldots, - k/2 \}$, we let $\gamma_{k , i}$ be the path starting from $i$ and making jumps of size $k$ until it reaches an integer smaller than $-N$.
We then denote by $\Gamma_k$ the set of paths constructed this way, i.e.,
\begin{equation*}
    \Gamma_k := \left\{ \gamma_{k , i} \, : \, i \in \{ -k , \ldots, -k/2 \} \right\}.
\end{equation*}
We refer to Figure~\ref{upperboundalpha=3firstpart.picture} for a visual description of set $\Gamma_{k}$ in the case $k = 4$. 
An important observation has to be made here: two paths of the form $\gamma_{k , i} $ and $\gamma_{k , i'} $ with $i \neq i'$ use distinct long-range edges of length $k$.
We then denote by $\Gamma$ the union of all the sets $\Gamma_k$ for $k \in \{ 2 , \ldots, \lfloor \sqrt{N} \rfloor \}$, i.e.,
\begin{equation*}
        \Gamma := \bigcup_{k = 2}^{\lfloor \sqrt{N} \rfloor} \Gamma_k.
\end{equation*}
Let us note that two paths of the form $\gamma_{k , i} $ and $\gamma_{k' , i'} $ must also use distinct long-range edges: indeed, if $k \neq k'$ then the path $\gamma_{k , i} $ uses only edges of length $k$ and the path $\gamma_{k' , i'} $ uses only edges of length $k'$, they are thus disjoint, and if $k = k'$, then we are in the case mentioned above.

We next perform the following operations on the path which all increase the variance of the height $\varphi(0)$ (and refer to Figure~\ref{upperboundalpha=3firstpart.picture} for guidance):
\begin{itemize}
    \item We first consider an integer $k \in \{ 2 , \ldots , \lfloor \sqrt{N} \rfloor  \}$ and an integer $i \in \{ - k  , \ldots, -k/2 \}$, and consider the path~$\gamma_{k , i}$. Applying Proposition~\ref{corollary.identification}, we may separate the vertices of $\gamma_{k , i}$ from the line $\Z$ as described in the first two steps of Figure~\ref{upperboundalpha=3firstpart.picture}. We next apply Proposition~\ref{corollary:splittingintoNparts} to erase all the isolated vertices on the long-range edges (see the last step of Figure~\ref{upperboundalpha=3firstpart.picture}). This operation generates an edge connecting the vertex $i$ to the endpoint of the path $\gamma_{k , i}$ with a conductance equal to
    \begin{equation*}
        c_{k , i} := \frac{1}{\sum_{j = 1}^{\lceil N/k \rceil} \frac{k^3}{\beta}}.
    \end{equation*}
    This conductance can be lower bounded as follows, for some constant $c \in (0,1)$,
    \begin{equation} \label{eq:lowerboundconductancecijk}
        c_{k , i} \geq \frac{c \beta}{N k^2}.
    \end{equation}
    \item We then iterate this operation to all the paths of $\Gamma$, this gives rise to $\left| \Gamma \right|$ new edges connecting an integer smaller than $-N$ (the endpoints of the paths of $\Gamma$) to a vertex close to $0$ (the starting points of the paths of $\Gamma$) and whose conductance are lower bounded by~\eqref{eq:lowerboundconductancecijk}.
\end{itemize}
We next tackle a technical difficulty: the starting point of the paths of $\Gamma$ is not the vertex $0$ (specifically, the starting point of the path $\gamma_{k , i} \in \Gamma$ is the vertex $i$). This is achieved thanks to the following argument (for which we refer to Figure~\ref{upperboundalpha=3secondpart.picture}).

\begin{figure} 
\begin{center}
\includegraphics[height =20cm, width=15cm]{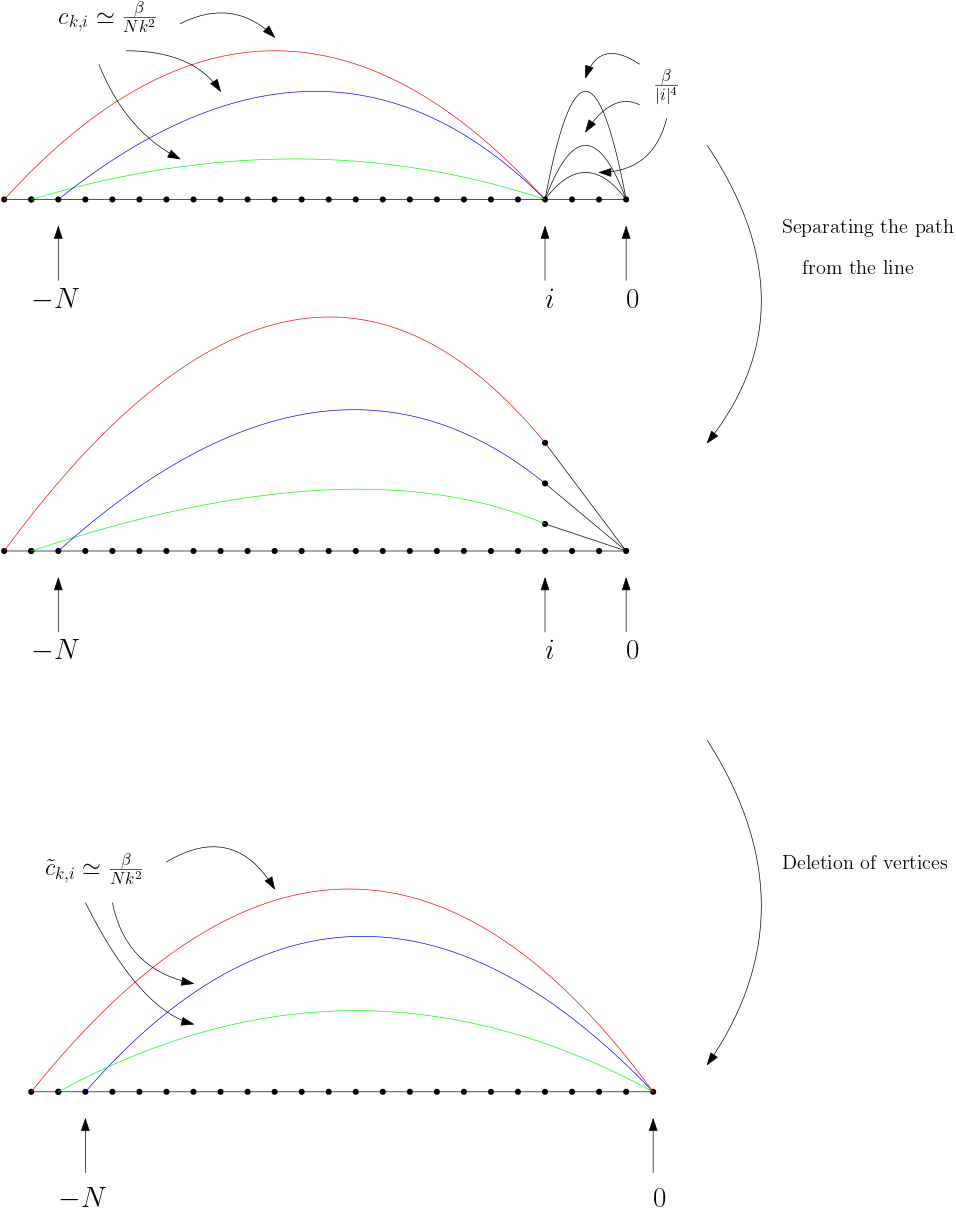}
\caption{Second part of the proof of the upper bound when $\alpha = 3$. We deal with the technical difficulty that the paths constructed in the previous step do not start from the vertex $0$ and, in the pictures, $3$ paths starting from $i$ are depicted. To handle this difficulty, we make use of the edge connecting $i$ to $0$, split it into the number of paths ending at $i$ and then isolate each of the paths.} \label{upperboundalpha=3secondpart.picture}
\end{center}
\end{figure}

\begin{figure} 
\begin{center}
\includegraphics[height =15cm, width=15cm]{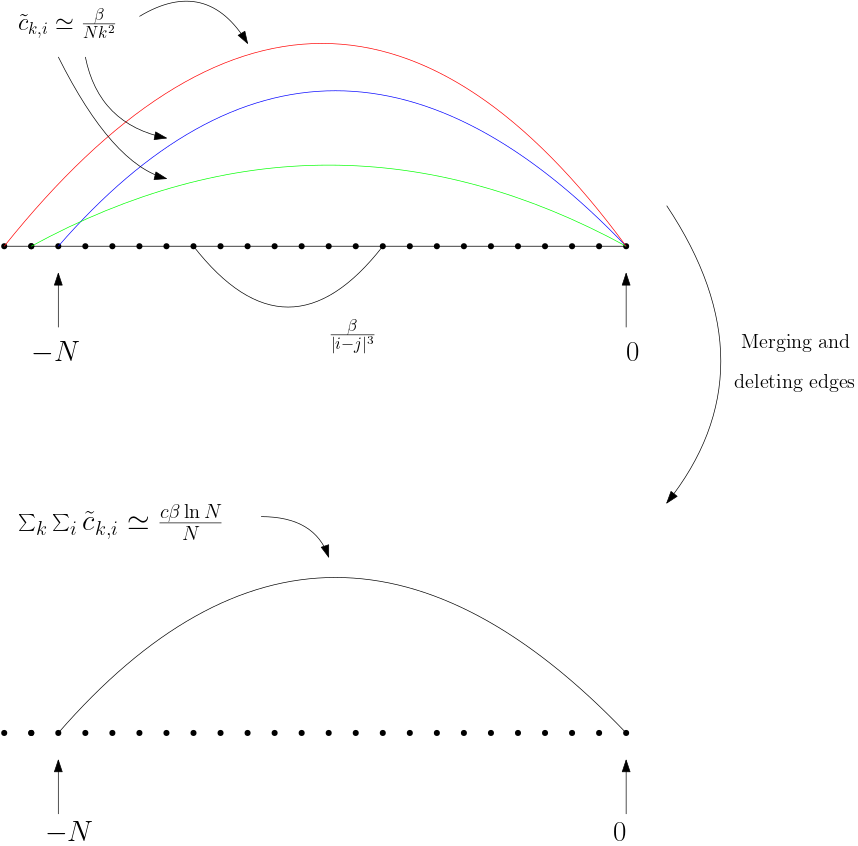}
\caption{Last part of the proof of the upper bound when $\alpha = 3$. We merge all the edges connecting the region $\{ i \in \Z \, : \, i \leq -N \}$ to $0$ which have been constructed in the previous step. This gives rise to a unique long-range edge connecting $-N$ to $0$ with a conductance of order $\beta \ln N / N.$} \label{upperboundalpha=3lastpart.picture}
\end{center}
\end{figure}

We first note that for any $i \in \{ - 1 , \ldots , - K \}$, the vertex $i$ can be the starting point of at most $|i|$ paths of the set $\Gamma$ (the paths $\left\{ \gamma_{k, i} \, : \, k \in \{ 2i , \ldots, i \} \cap 2\Z \right\}$). We then split the edge connecting $i$ to $0$ into $|i|$ edges with conductance equal to $\beta / |i|^4$ (see Figure~\ref{upperboundalpha=3secondpart.picture}), use Proposition~\ref{corollary.identification} to separate the paths $\left\{ \gamma_{k, i} \, : \, k \in \{ i , \ldots, 2i \} \cap 2\Z \right\}$ for the line $\Z$ and apply Proposition~\ref{corollary:splittingintoNparts} to erase the isolated vertices. These operations all increase the variance of the height $\varphi(0)$ and give rise to a  collection of long-range edges connecting the vertex $0$ to the region $\{ x \in \Z \, : \, x \leq -N \}$  whose conductances, denoted by $\tilde c_{k , i}$, are equal to
\begin{equation*}
    \tilde c_{k , i} := \frac{1}{\frac{1}{c_{k , i}} + \frac{|i|^4}{\beta}}.
\end{equation*}
Using~\eqref{eq:lowerboundconductancecijk}, these conductances can be lower bounded by
\begin{equation*}
    \tilde c_{k , i} \geq \frac{c \beta}{Nk^2 + |i|^4}.
\end{equation*}
Using that $k/2 \leq |i| \leq k$ and that $k \leq K \leq \lfloor \sqrt{N} \rfloor$, we obtain that $|i|^4 \leq k^2 K^2 \leq k^2 N$. Consequently
\begin{equation} \label{eq:lowerboundtildec}
    \tilde c_{k , i} \geq \frac{c \beta}{Nk^2}.
\end{equation}
In the last step of the proof, we merge all the edges connecting the left side of $-N$ to $0$ together (N.B. due to the Dirichlet boundary condition, all the vertices on the left on $-N$ can be identified together and this operation does not affect the distribution of the height function) and erase all the other edges. Note that this operation increases the variance of $\varphi(0)$. We refer to Figure~\ref{upperboundalpha=3lastpart.picture} for a visual description of this step. It generates a graph with one edge connecting the left side of $-N$ to $0$ whose conductance is equal to 
\begin{equation*}
    \sum_{\substack{k = 1 \\ k \, \mathrm{even}}}^{\sqrt{N}} \sum_{i \in \{ -k , \ldots, -k/2\}} \tilde c_{k , i}.
\end{equation*}
Using~\eqref{eq:lowerboundtildec}, we can lower bound this conductance by
\begin{equation*}
    \sum_{\substack{k = 1 \\ k \, \mathrm{even}}}^{\sqrt{N}} \sum_{i \in \{ -k , \ldots, -k/2\}}  \tilde c_{k , i} \geq \frac{c \beta}{N} \sum_{\substack{k = 1 \\ k \, \mathrm{even}}}^{\sqrt{N}} \sum_{i \in \{ -k , \ldots, -k/2\}} \frac{1}{k^2} \geq \frac{c \beta}{N} \sum_{\substack{k = 1 \\ k \, \mathrm{even}}}^{\sqrt{N}} \frac{1}{k} \geq \frac{c \beta \ln N}{N}.
\end{equation*}
The variance of $\varphi(0)$ is thus upper bounded by the one of an integer-valued Gaussian random variable with conductance equal to $c \beta \ln N / N$, which is itself bounded by $C N /  (\beta \ln N)$, see Lemma \ref{lemma:lemma2.2}.

\subsubsection{The case $\alpha \in (1,3)$ via B\"{a}umler's technique} \label{sec.Baumlertechnique}

This section is devoted to the proof of the upper bounds of Theorem~\ref{thm.gaussian} for $\alpha \in (1, 3)$. We note that the argument written below is a (minor) adaptation of the proof of B\"{a}umler~\cite[Proof of Theorem 1.1]{baumler2023recurrence} of the recurrence of the one-dimensional random walk with long-range jumps (i.e., the argument is the same but has been adapted to match the formalism used in this article). We refer to Figures~\ref{Splittingupperbound.picture} and~\ref{isolatingupperbound.picture} and~\ref{mergingupperbound.picture} for guidance.

    \begin{figure} 
\begin{center}
\includegraphics[scale=0.7]{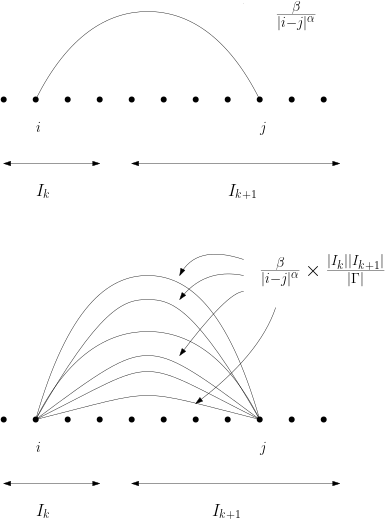}
\caption{First part of the proof: each edge connecting two vertices $i \in I_k$ and $j \in I_{k+1}$ is split into $\left| \Gamma \right| / (|I_k| |I_{k+1}|)$ parallel edges.} \label{Splittingupperbound.picture}
\end{center}
\end{figure}

    \begin{figure} 
\begin{center}
\includegraphics[scale=0.65]{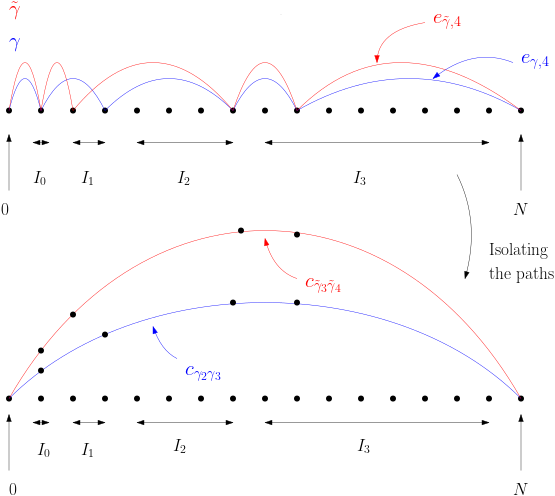}
\caption{On the top picture: Two paths, $\gamma$ and $\tilde \gamma$ of $\Gamma$ are drawn (in blue and red respectively). They shall only use distinct edges. On the bottom picture: Proposition~\ref{corollary.identification} is used to separate the path and generate direct lines connecting the vertex $0$ to the region $\left\{ n \in \N \, : \, n \geq N \right\}$ where the Dirichlet boundary condition is imposed} \label{isolatingupperbound.picture}
\end{center}
\end{figure}

    \begin{figure} 
\begin{center}
\includegraphics[scale=0.55]{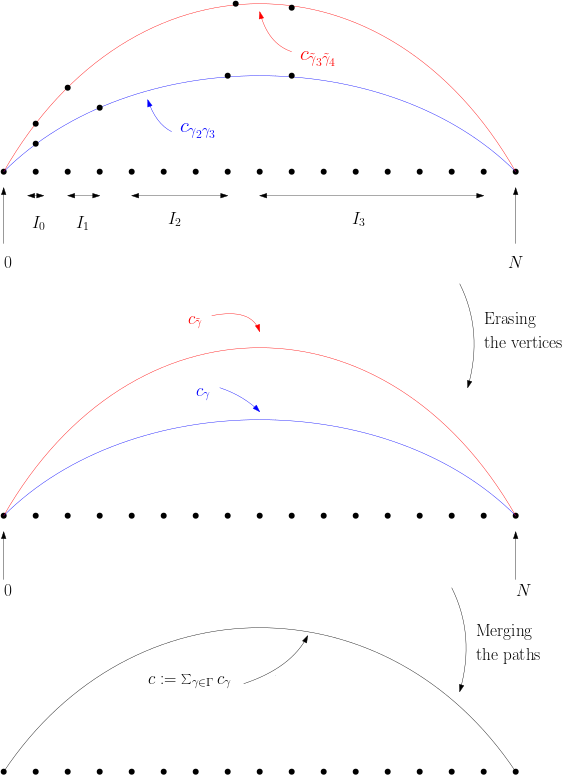}
\caption{Last step of the proof : all the edges connecting the vertex $0$ to the region $\left\{ n \in \N \, : \, n \geq N \right\}$ are merged together and the conductances of all the other edges are reduced to $0$ (i.e., the edges are deleted).} \label{mergingupperbound.picture}
\end{center}
\end{figure}

We first let $K$ be the largest integer such that $2^{K+1} \leq N$ (i.e., $K := \lfloor \ln_2 N \rfloor - 1$). We then consider the following collection of discrete intervals: for any $k \in \{ 0 , \ldots, K\}$,
\begin{equation*}
    I_k := \{ 2^k , \ldots, 2^{k+1} - 1\}.
\end{equation*}
We then denote by $\Gamma$ the collection of finite paths
\begin{equation*}
    \Gamma := \left\{ \gamma : \{0 , \ldots, K\} \to \N \, : \, \forall k \in \{ 1 , \ldots, K\},~ \gamma_k \in I_k \right\}.
\end{equation*}
Note that $\Gamma$ is a finite set and we denote by $\left| \Gamma \right|$ its cardinality (N.B. it is equal to $2^{K(K+1)/2}$).

We then perform the following operations which all increase the variance of the height~$\varphi(0)$ (see Figures~\ref{Splittingupperbound.picture},~\ref{isolatingupperbound.picture} and~\ref{mergingupperbound.picture} for guidance):

\begin{itemize}
    \item For each integer $k \in \{ 0 , \ldots, K -1 \}$ and each pair of vertices $i \in I_k, j \in I_{k+1}$, we split the edge connecting $i$ and $j$ into $\frac{|\Gamma|}{|I_k| |I_{k+1}|}$ edges (N.B. this is the number of paths of $\Gamma$ which visit both $i$ and $j$), and assign to each new edge the conductance 
    \begin{equation*}
        c_{i j} := \frac{\beta |I_k| |I_{k+1}|}{|\Gamma| |i-j|^{\alpha}} \geq c \beta \frac{2^{(2 - \alpha)k}}{|\Gamma|} .
    \end{equation*}
    This step is depicted on Figure~\ref{Splittingupperbound.picture}.
    \item For each path $\gamma \in \Gamma$ and each integer $k \in \{ 0 , \ldots, K \}$, we select an edge connecting the vertices $\gamma_k$ and $\gamma_{k+1}$ and denote this edge by $e_{\gamma, k}$. We choose these edges so that they are all distinct (N.B. this is possible thanks to the previous operation). 
    \item We isolate the collection of edges $\left\{ e_{\gamma, k} \, : \, k \in \{0 , \ldots, K\} \right\}$ from the rest of the chain (see Figure~\ref{isolatingupperbound.picture}).
    \item We then erase the vertices on the line which has been isolated at the previous step (see Figure~\ref{mergingupperbound.picture}). These operations generate an edge connecting $0$ to $\gamma_K$ (with $\gamma_K > N$ the endpoint of $\gamma$) whose conductance is equal to
    \begin{equation*}
        c_\gamma := \frac{1}{\sum_{k = 1}^K \frac{1}{c_{\gamma_k  \gamma_{k+1}}}} \geq  \frac{c \beta}{\left| \Gamma \right| \sum_{k = 1}^K 2^{(\alpha - 2)k}} \geq 
        \left\{ 
        \begin{aligned}
        c \left| \Gamma \right|^{-1} \beta & ~\mbox{if}~ \alpha \in (1,2), \\
        \frac{c  \left| \Gamma \right|^{-1} \beta}{\ln N} & ~\mbox{if}~ \alpha = 2, \\
        c  \left| \Gamma \right|^{-1} \beta N^{(2- \alpha)} & ~\mbox{if}~ \alpha \in (2 , 3).
        \end{aligned}
        \right.
    \end{equation*}
    \item We apply the previous operations to all the paths $\gamma \in \Gamma$, then merge together the $\left| \Gamma \right|$ edges obtained and add their conductance. This generates an edge connecting $0$ to the region $\left\{ x \in \Z \, : \, x \geq N \right\}$ (once again, due to the Dirichlet boundary condition, all these vertices can be identified together) whose conductance is given by
\begin{equation*}
    \sum_{\gamma \in \Gamma} c_\gamma \geq \left\{ 
        \begin{aligned}
        c \beta & ~\mbox{if}~ \alpha \in (1,2), \\
        \frac{c \beta}{\ln N} & ~\mbox{if}~ \alpha = 2, \\
        c \beta N^{(2- \alpha)} & ~\mbox{if}~ \alpha \in (2 , 3).
        \end{aligned}
        \right.
\end{equation*}
We finally reduce to $0$ the values of all the other conductances.
\end{itemize}

All the operations performed until now increase the variance of the height $\varphi(0)$, and they show that this variance is smaller than the one of an integer-valued Gaussian random variable with a conductance equal to the constant $c$ above. Applying Lemma~\ref{lemma:lemma2.2}, we obtain
\begin{equation*}
        \mathrm{Var}_{N ,\beta , \alpha} \left[ \varphi(0) \right] \leq 
        \left\{ \begin{aligned}
            C_\beta & ~\mbox{for} ~ \alpha \in (1,2), \\
            C_\beta \ln N & ~\mbox{for} ~ \alpha = 2, \\
            C_\beta N^{\alpha - 2} & ~\mbox{for} ~ \alpha \in (2,3).
        \end{aligned} \right.
    \end{equation*}

\section{Delocalisation for the integer-valued $q$-SOS long-range chain} \label{sec.section4qSOS}

This section is devoted to the proof of Theorem~\ref{thm.qSOS}. The strategy is similar to the one used in Section~\ref{sec:section3} with an additional step: following the recent article of Sellke~\cite{sellke2024localization}, we first use that the potential $x \mapsto e^{- |x|^q}$ can be decomposed into a mixture of centred Gaussian densities (see Proposition~\ref{prop:Gaussdecomp}) together with the FKG inequality to rephrase the question of the delocalisation of the $q$-SOS long-range chain into the question of the delocalisation of the Gaussian chain equipped with i.i.d. random conductances, which can then be analysed using similar techniques as in the proof of Theorem~\ref{thm.gaussian}. These random conductances follow a stable distribution of parameter $q/2 \in (0,1)$ and thus possess a heavy-tail which affects the behaviour of the chain (and modifies the growth exponent).

We note that, while an explicit dependence of the constants in the inverse temperature $\beta$ could be obtained from the argument, this dependence is more difficult to follow than in the previous section (where all the constants were linear in $\beta$) and we thus decided not to keep track of it. We will thus from now on allow the constants to depend on the parameter $\beta$ (in addition to the exponents $\alpha$ and $q$).

To simplify the notation, we set in all this section
$$
\beta_q := \beta^{2/q} ~~\mbox{and}~~ \alpha_q := \frac{2 \alpha}{q}.$$ 

\subsection{The lower bound} \label{sec:lowerbound-q-SOS}

\subsubsection{Domination by a Gaussian long-range chain}

In this section, we relate the variance of the height of a $q$-SOS long-range chain to the one of a Gaussian long-range chain with random conductances. We first fix an exponent $q \in (0 , 2]$ and recall the definition of the probability distribution $\mu_q$ introduced in Proposition~\ref{prop:Gaussdecomp}.

\begin{definition}[Random conductances] \label{def.randomconduntances}
We let $(\lambda_{ij})_{i , j \in \Z}$ be i.i.d. random variables distributed according to the measure $\mu_q$. We denote by $\mathbf{P}_q$ the law of the conductances and by $\mathbf{E}_{q}$ the expectation with respect to $\mathbf{P}_q$.
\end{definition}

Let us fix an integer $N \in \N$, an exponent $\alpha$, an inverse temperature $\beta$, and a collection of conductances $\lambda=(\lambda_{ij})_{i , j \in \Z} \in (0 , \infty)^{\Z \times \Z}$. We denote by $ \mu_{N , \beta , \alpha}^\lambda$ the finite-volume integer-valued Gaussian free field with long-range interaction and conductances $\lambda$. We denote by $\mathrm{Var}_{N , \beta , \alpha}^\lambda$ the variance with respect to $\mu_{N ,  \beta , \alpha}^\lambda$.

\begin{proposition}[Domination by a Gaussian chain with i.i.d. random conductances] \label{prop.dominationgaussianlower}
For any integer $N \in \N$, any inverse temperature $\beta\in (0, \infty)$ and any range exponent $\alpha \in (1 , \infty)$, one has the inequality
\begin{equation*}
    \mathrm{Var}_{N , \beta , \alpha}^{q-\mathrm{SOS}} \left[ \varphi(0) \right] \geq \mathbf{E}_{q} \left[ \mathrm{Var}_{N ,\beta_{q} , \alpha_{q}}^\lambda \left[ \varphi(0) \right] \right].
\end{equation*}
\end{proposition}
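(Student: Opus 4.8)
The plan is to use the Gaussian decomposition of the power-law interaction (Proposition~\ref{prop:Gaussdecomp}) to write the $q$-SOS partition function and un-normalized expectations as averages over random conductances, and then apply the FKG inequality to move the average outside. First I would fix $q \in (0,2]$ and observe that, for each edge $ij$ with conductance weight $\beta/|i-j|^\alpha$ in the $q$-SOS chain, the identity $e^{-\beta |x|^q/|i-j|^\alpha} = e^{-(\beta^{2/q} |x|^2)/|i-j|^{2\alpha/q}}$ valid after the substitution $x \mapsto x$ is \emph{not} quite what is used; rather, one applies~\eqref{eq:defannealedgaussian} with $x$ replaced by $\beta^{1/q}|i-j|^{-\alpha/q}(\varphi(i)-\varphi(j))$, so that
\begin{equation*}
    \exp\!\left( -\beta \frac{|\varphi(i)-\varphi(j)|^q}{|i-j|^\alpha} \right)
    = \int_{(0,\infty)} \exp\!\left( - \lambda \, \frac{\beta_q}{|i-j|^{\alpha_q}} (\varphi(i)-\varphi(j))^2 \right) d\mu_q(\lambda),
\end{equation*}
using $\beta_q = \beta^{2/q}$ and $\alpha_q = 2\alpha/q$. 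Taking the product over all pairs $ij$ and using independence of the $\lambda_{ij}$ under $\mathbf P_q$, the Boltzmann weight of a configuration $\varphi$ for the $q$-SOS chain equals $\mathbf E_q\big[ \prod_{ij} e^{-\lambda_{ij}\beta_q |i-j|^{-\alpha_q}(\varphi(i)-\varphi(j))^2}\big]$, i.e.\ the annealed Boltzmann weight of the Gaussian chain with conductances $\lambda_{ij}\beta_q/|i-j|^{\alpha_q}$.

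Next I would pass from partition functions to the variance. Summing over $\varphi \in \Omega_N$ and using Tonelli to exchange the sum with $\mathbf E_q$ gives $Z^{q\text{-}\mathrm{SOS}}_{N,\beta,\alpha} = \mathbf E_q\big[ Z^\lambda_{N,\beta_q,\alpha_q}\big]$, and similarly $\sum_\varphi \varphi(0)^2 \cdot (\text{weight}) = \mathbf E_q\big[ \varphi(0)^2 \text{-moment of } Z^\lambda\big]$. Writing $M(\lambda) := \sum_{\varphi} \varphi(0)^2 \exp(-\sum_{ij}\lambda_{ij}\beta_q|i-j|^{-\alpha_q}(\varphi(i)-\varphi(j))^2)$ and $Z(\lambda) := Z^\lambda_{N,\beta_q,\alpha_q}$, we have $\mathrm{Var}^{q\text{-}\mathrm{SOS}}_{N,\beta,\alpha}[\varphi(0)] = \mathbf E_q[M(\lambda)]/\mathbf E_q[Z(\lambda)]$ (since $\varphi(0)$ is symmetric under $\varphi \mapsto -\varphi$ the mean vanishes and the variance is the second moment), whereas $\mathbf E_q[\mathrm{Var}^\lambda_{N,\beta_q,\alpha_q}[\varphi(0)]] = \mathbf E_q[M(\lambda)/Z(\lambda)]$. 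So the claimed inequality reduces to
\begin{equation*}
    \mathbf E_q\!\left[\frac{M(\lambda)}{Z(\lambda)}\right] \le \frac{\mathbf E_q[M(\lambda)]}{\mathbf E_q[Z(\lambda)]},
    \quad\text{equivalently}\quad
    \mathbf E_q\!\left[\frac{M(\lambda)}{Z(\lambda)}\right]\mathbf E_q[Z(\lambda)] \le \mathbf E_q[M(\lambda)].
\end{equation*}

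The key point, and the main obstacle, is to realize this as an FKG correlation inequality. The function $\lambda \mapsto Z(\lambda)$ is decreasing (each conductance decreases every summand, cf.\ the last bullet of Remark~\ref{rem:partitionfunctionmonotone}), and likewise $\lambda \mapsto M(\lambda)$ is decreasing. I would apply Proposition~\ref{prop.FKGinequality} (in its version for infinitely many independent variables, cf.\ the remark after it) to the two \emph{decreasing} functions $f(\lambda) := M(\lambda)/Z(\lambda)$ and $g(\lambda) := Z(\lambda)$: their product is $M(\lambda)$, so FKG gives $\mathbf E_q[M(\lambda)] = \mathbf E_q[f g] \ge \mathbf E_q[f]\,\mathbf E_q[g]$, which is exactly the inequality above. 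The one thing requiring care is that $f = M/Z$ is indeed monotone decreasing in $\lambda$; this is the Regev–Stephens-Davidowitz-type monotonicity statement, namely that $\mathrm{Var}^\lambda_{N,\beta_q,\alpha_q}[\varphi(0)]$ is a decreasing function of the conductances (Proposition~\ref{prop:monotonicityconductances}), applied after noting that scaling all conductances by the deterministic positive factors $\beta_q/|i-j|^{\alpha_q}$ preserves the partial order. A minor technical point is the truncation to finitely many conductances (only the edges inside $\{-N+1,\dots,N\}$ and the boundary-contracted edges to $N$ actually appear, per Remark~\ref{rem:partitionfunctionmonotone}), so that the finite-$n$ FKG inequality of Proposition~\ref{prop.FKGinequality} applies directly, together with an application of Tonelli/dominated convergence to justify the interchanges of $\mathbf E_q$ with the sums over $\varphi$; these are routine once one notes $M,Z$ are bounded by convergent series with $\mathbf E_q$-integrable (in fact deterministic, after bounding $\lambda_{ij}$ below by $0$) bounds.
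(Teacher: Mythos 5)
Your proposal is correct and follows essentially the same route as the paper: decompose each Boltzmann factor via Proposition~\ref{prop:Gaussdecomp} (with the same rescaling giving $\beta_q$ and $\alpha_q$), express $\mathrm{Var}^{q-\mathrm{SOS}}_{N,\beta,\alpha}[\varphi(0)]$ as the ratio $\mathbf{E}_q[\mathrm{Var}^\lambda\, Z^\lambda]/\mathbf{E}_q[Z^\lambda]$, and apply the FKG inequality of Proposition~\ref{prop.FKGinequality} to the two decreasing functions $\lambda \mapsto \mathrm{Var}^\lambda_{N,\beta_q,\alpha_q}[\varphi(0)]$ (Proposition~\ref{prop:monotonicityconductances}) and $\lambda \mapsto Z^\lambda_{N,\beta_q,\alpha_q}$ (Remark~\ref{rem:partitionfunctionmonotone}). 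Your added remarks on the vanishing mean, the finitely many effective conductances, and the Tonelli interchange are harmless elaborations of steps the paper leaves implicit.
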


\begin{proof}
    We first write
    \begin{equation*}
        \mathrm{Var}_{N , \beta , \alpha}^{q-\mathrm{SOS}} \left[ \varphi(0) \right] = \frac{\sum_{\varphi \in \Omega_N} \left| \varphi(0) \right|^2 \exp \left( - \beta \sum_{i , j \in \Z} \frac{\left| \varphi(i) - \varphi(j) \right|^q}{|i - j|^\alpha} \right)}{Z_{N , \beta , \alpha}^{q-\mathrm{SOS}}}.
    \end{equation*}
    We next use the identity stated in Proposition~\ref{prop:Gaussdecomp} to rewrite the numerator of the previous display as follows
    \begin{align*}
        \lefteqn{\sum_{\varphi \in \Omega_N} \left| \varphi(0) \right|^2 \exp \left( - \beta \sum_{i , j \in \Z} \frac{\left| \varphi(i) - \varphi(j) \right|^q}{|i - j|^\alpha} \right) } \qquad & \\ 
        & = \sum_{\varphi \in \Omega_N} \left| \varphi(0) \right|^2 \prod_{i , j \in \Z}  \exp \left( - \beta \frac{\left| \varphi(i) - \varphi(j) \right|^q}{|i - j|^\alpha} \right) \\
        & = \sum_{\varphi \in \Omega_N} \left| \varphi(0) \right|^2 \prod_{i , j \in \Z} \int_{(0 , \infty)} \exp \left( - \lambda  \beta_{q} \frac{\left| \varphi(i) - \varphi(j) \right|^2}{|i - j|^{\alpha_{q}}} \right) d \mu_q(\lambda) \\
        & = \sum_{\varphi \in \Omega_N} \left| \varphi(0) \right|^2 \int_{(0 , \infty)^{\Z \times \Z}} \exp \left( - \sum_{i , j \in \Z}  \frac{\lambda_{ij}  \beta_{q} \left| \varphi(i) - \varphi(j) \right|^2}{|i - j|^{\alpha_{q}}} \right) \prod_{i,j \in \Z} d \mu_q(\lambda_{ij}).
    \end{align*}
    We then use of the identity
    \begin{equation*}
        \sum_{\varphi \in \Omega_N} \left| \varphi(0) \right|^2  \exp \left( - \sum_{i , j \in \Z}  \frac{\lambda_{ij}  \beta_{q} \left| \varphi(i) - \varphi(j) \right|^2}{|i - j|^{\alpha_{q}}} \right) =  \mathrm{Var}_{N ,\beta_{q} , \alpha_{q}}^\lambda \left[ \varphi(0) \right] \times  Z_{N ,\beta_{q} , \alpha_{q}}^\lambda,
    \end{equation*}
    together with Definition~\ref{def.randomconduntances} to obtain
    \begin{equation*}
        \sum_{\varphi \in \Omega_N} \left| \varphi(0) \right|^2 \exp \left( - \beta \sum_{i , j \in \Z} \frac{\left| \varphi(i) - \varphi(j) \right|^q}{|i - j|^\alpha} \right) = \mathbf{E}_q \left[  \mathrm{Var}_{N ,\beta_{q} , \alpha_{q}}^\lambda \left[ \varphi(0) \right] \times  Z_{N ,\beta_{q} , \alpha_{q}}^\lambda \right].
    \end{equation*}
    The same computation yields the identity for the partition function
    \begin{equation*}
        Z_{N , \beta , \alpha}^{q-\mathrm{SOS}} = \sum_{\varphi \in \Omega_N} \exp \left( - \beta \sum_{i , j \in \Z} \frac{\left| \varphi(i) - \varphi(j) \right|^q}{|i - j|^\alpha} \right) = \mathbf{E}_q \left[  Z_{N ,\beta_{q} , \alpha_{q}}^\lambda \right].
    \end{equation*}
    Combining the few previous results, we have obtained
    \begin{equation*}
        \mathrm{Var}_{N , \beta , \alpha}^{q-\mathrm{SOS}} \left[ \varphi(0) \right] =  \frac{\mathbf{E}_q \left[  \mathrm{Var}_{N ,\beta_{q} , \alpha_{q}}^\lambda \left[ \varphi(0) \right] \times  Z_{N ,\beta_{q} , \alpha_{q}}^\lambda \right]}{\mathbf{E}_q \left[  Z_{N ,\beta_{q} , \alpha_{q}}^\lambda \right]}.
    \end{equation*}
    We finally note that the two functions $\lambda \mapsto \mathrm{Var}_{N ,\beta_{q} , \alpha_{q}}^\lambda \left[ \varphi(0) \right]$ and $\lambda \mapsto Z_{N ,\beta_{q} , \alpha_{q}}^\lambda$ are both decreasing in the conductances $\lambda = (\lambda_{ij})_{i, j \in \Z}$. This result is a consequence of Proposition~\ref{prop:monotonicityconductances} and Remark~\ref{rem:partitionfunctionmonotone}. We may thus apply the FKG-inequality stated in Proposition~\ref{prop.FKGinequality} to obtain the inequality.
    \begin{equation*}
        \mathrm{Var}_{N , \beta , \alpha}^{q-\mathrm{SOS}} \left[ \varphi(0) \right] =  \frac{\mathbf{E}_q \left[  \mathrm{Var}_{N ,\beta_{q} , \alpha_{q}}^\lambda \left[ \varphi(0) \right] \times  Z_{N ,\beta_{q} , \alpha_{q}}^\lambda \right]}{\mathbf{E}_q \left[  Z_{N ,\beta_{q} , \alpha_{q}}^\lambda \right]} \geq \mathbf{E}_q \left[  \mathrm{Var}_{N ,\beta_{q} , \alpha_{q}}^\lambda \left[ \varphi(0) \right]  \right].
    \end{equation*}
    The proof of Proposition~\ref{prop.dominationgaussianlower} is complete.
\end{proof}

The following sections contain the proofs of the lower bounds of Theorem~\ref{thm.qSOS}. They are obtained by combining the technique developed in Section~\ref{sec:lowerboundGaussian} together with the results of the previous section. As in Section~\ref{sec:lowerboundGaussian}, we split the argument into different cases, depending on the value of the exponent $\alpha$.

\subsubsection{The cases $\alpha > 2+{q}/{2}$ } \label{sec:sec421}

Consider a random i.i.d. collection $\lambda=(\lambda_{ij})_{i,j \in \Z}$ distributed under $\mu_q$, and work with the quenched model with conductances $\lambda$.

After performing the same operations as in Section \ref{sec:lowerboundGaussianalpha>3}, we obtain a (nearest neighbour) line $\{ 0 , \ldots, N \}$, and, for any vertex $i \in \{ -N, \ldots, N - 1 \}$, the value of the conductance on the edge $i (i +1)$ is equal to 
\begin{equation*}
C_i := \beta_q \sum_{j \leq i} \sum_{j' \geq i + 1} \frac{\lambda_{jj'}}{\left| j - j'\right|^{\alpha_q - 1}} + \beta_q \sum_{j \leq - i-1} \sum_{j' \geq - i} \frac{\lambda_{jj'}}{\left| j - j'\right|^{\alpha_q - 1}}.
\end{equation*}
We note that the random variables $(C_i)_{i \in \{0 , \ldots, N-1\}}$ are identically distributed but not independent.

The variance of $\varphi(0)$ can then be estimated using Remark~\ref{rem:partitionfunctionmonotone} together with the observation that the variance of an integer-valued Gaussian random variable of conductance $C_i$ is larger than $c e^{-C_i}$. We thus obtain 
\begin{equation*}
\mathrm{Var}^\lambda_{N , \beta_q , \alpha_q} \left[ \varphi(0) \right] \geq  
\sum_{i=1}^N c e^{- C_i }.
\end{equation*}
Taking the expectation with respect to the conductances and applying Proposition~\ref{prop.dominationgaussianlower}, we deduce that
\begin{equation*}
    \mathrm{Var}_{N , \beta , \alpha}^{q-\mathrm{SOS}} \left[ \varphi(0) \right]  \geq \mathbf{E}_q \left[ \mathrm{Var}^\lambda_{N , \beta_q , \alpha_q} \left[ \varphi(0) \right] \right] \geq \sum_{i=1}^N c \mathbf{E}_q \left[ e^{- C_i } \right].
\end{equation*}
To prove that this sum is of order $N$, we will show that there exists a constant $K_{1/2}<\infty$ such that, for any $i\in\{0 , \ldots, N-1\}$,
\begin{equation} \label{eq:lowqSOScondit}
\mathbf{P}_q \left[ C_i<K_{1/2} \right]> \frac{1}{2}.
\end{equation}
(N.B. Since the random variables $(C_i)_{i \in \{1 , \ldots, N\}}$ are identically distributed, the left-hand side does not depend on the vertex $i$.)
Let $$S:= \sum_{j \geq 2, j' \leq 1 } \frac{\lambda_{jj'}}{\left|j - j'\right|^{\alpha_q -1}}. $$ Then its Laplace transform satisfies  
$$L_S(t):=\mathbf{E}_q [e^{-tS} ]=\prod_{j \geq 2, j' \leq 1 } L_\lambda(t/\left|j - j'\right|^{\alpha_q -1}),$$
where $L_\lambda$ is the Laplace transform of the law $\mu_q$, i.e., $L_\lambda(t)= e^{-{t}^{q/2}}$. We thus have
$$L_S(t)=\prod_{j \geq 2, j' \leq 1 } \exp \left( -{t}^{q/2}|j-j'|^{-(\alpha_q -1)q/2} \right) = \exp \left( -{t}^{q/2}\sum_{j \geq 2, j' \leq 1 } \left|j - j'\right|^{-(\alpha_q -1)q/2} \right).$$
Under the assumption $\alpha > 2+q/2$, we have $2 < (\alpha_q -1)q/2 $ which implies that the sum in the exponential on the right-hand side of the previous display is finite, i.e.,
\begin{equation*}
    C_{\alpha, q} := \sum_{j \geq 2, j' \leq 1 } \left|j - j'\right|^{-(\alpha_q -1)q/2}  < \infty,
\end{equation*}
and thus $L_S(t)=e^{-C_{\alpha, q} {t}^{q/2}}$.
We deduce that if $\alpha > 2+q/2$ then the law of the random variable $S / C_{\alpha, q}^{2/q}$ is the measure $\mu_q$ (as they have the same Laplace transform):
$$\frac{S}{C_{\alpha, q}^{2/q}} \sim \mu_q.$$
Thus, by Remark~\ref{remark2.13}, we have, for any $K\geq 1$,
$$\mathbf{P}_q \left[ S<K \right] \geq 1- C K^{-q/2}.$$
Since the random variable $C_i$ is the sum of two random variables whose law is the same as the one of $S$ (multiplied by the factor $\beta_q$), we may combine the previous inequality with a union bound to deduce that there exists a constant $K_{1/2} < \infty$ (which depends on the parameters $\alpha, \beta , q$) such that the inequality~\eqref{eq:lowqSOScondit} is satisfied.
We then deduce that
$$\mathrm{Var}_{N , \beta , \alpha}^{q-\mathrm{SOS}} \left[ \varphi(0) \right] \geq \sum_{i=1}^N c \mathbf{E}_q \left[ e^{-C_i} \right]
\geq \sum_{i=1}^N c \mu_q(C_i<K_{1/2}) e^{-K_{1/2}}
\geq c N.$$

\subsubsection{The cases $\alpha \in (2, 2+{q}/{2})$}
The argument is similar to the one presented in Section~\ref{sec:section312}, we will thus use the notation introduced there and only present the main differences.
We set 
\begin{equation} \label{eq:0939gamma}
    \gamma := \frac{2}{q} \left( 2 - \alpha \right)+ 1 \in (0,1)
\end{equation}
and proceed as in the Gaussian case of Section~\ref{sec:section312} by first constructing blocks of length $N^\gamma$ and estimating the values of the various conductances (as depicted in Figure~\ref{Delocalphasmallerthan3.picture}). As in the proof of Section~\ref{sec:section312}, we focus on the interval $\{ 0 , \ldots, \lceil N^\gamma \rceil \}$. We first consider the edge connecting the two extremities $0$ and $\lceil N^\gamma \rceil$ and note that the value of the conductance of this edge is given by the formula
\begin{equation*}
    \beta_q \sum_{j \leq 0 , j' \geq \lceil N^\gamma \rceil} \frac{(n_{jj'} +1) \lambda_{jj'}}{|j - j'|^{\alpha_q}}.
\end{equation*}
We then consider an integer $k \in \{ 1 , \ldots, \lceil N^\gamma \rceil  \}$ and note that the value of the conductance on the edge connecting $k$ to $\lceil N^\gamma \rceil$ is given by the identity
\begin{equation*}
    \beta_{q} \sum_{j \geq \lceil N^\gamma \rceil} \frac{(n_{kj} +1) \lambda_{kj}}{|k - j|^{\alpha_q}}.
\end{equation*}
We next use Proposition~\ref{corollary.identification} to identify all the vertices $\{ 0 , \ldots, \lceil N^\gamma \rceil - 1 \}$ together (as depicted in Figure~\ref{Delocalphasmallerthan31.picture}). This operation transforms the interval $\{ 0 , \ldots, \lceil N^\gamma \rceil \}$ into a graph containing two vertices connected by a conductance whose value is 
\begin{equation} \label{eq:14140711}
    C_0 := \beta_{q} \underset{\eqref{eq:14140711}-(i)}{\underbrace{\sum_{j \leq 0 , j' \geq \lceil N^\gamma \rceil} \frac{(n_{jj'} +1) \lambda_{ij}}{|j - j'|^{\alpha_q}}}} + \beta_{q} \underset{\eqref{eq:14140711}-(ii)}{\underbrace{\sum_{k = 1}^{\lceil N^\gamma \rceil - 1} \sum_{j \geq \lceil N^\gamma \rceil} \frac{(n_{kj} +1) \lambda_{kj}}{|j - k|^{\alpha_q}}}}.
\end{equation}
The next step of the argument is to show that the random conductance $C_0$ does not take too large values. Specifically, we will show that there exists a constant $C < \infty$ (depending on $\alpha, \beta$ and $q$) such that for any $K \geq 1$,
\begin{equation} \label{eq:1526}
    \mathbf{P}_q \left[ C_0 < K \right] \geq 1 - C K^{-q/2}.
\end{equation}
We first estimate the term~\eqref{eq:14140711}-(i) using the inequality~\eqref{eq:estnij} to bound the term $n_{ij}$ together with the trivial bound $n_{ij} \leq C N^{1-\gamma}$ (i.e., we bound $n_{ij}$ by the maximal number of blocks of size $\lceil  N^\gamma \rceil$ in the interval $\{-N , \ldots, N\}$). We obtain
\begin{equation} \label{eq:14270711}
    \eqref{eq:14140711}-(i) \leq \frac{C}{N^\gamma} \sum_{\substack{j \in \{ -N, \dots, 0\}, \\  j'\in \{ \lceil N^\gamma \rceil, \dots, N \}}} \frac{\lambda_{jj'}}{\left|j - j'\right|^{\alpha_q -1}} + C N^{1-\gamma}\sum_{\substack{ j \leq 0, j' \geq \lceil N^\gamma \rceil \\ j \leq -N \, \mathrm{or} \, j' \geq N}} \frac{\lambda_{jj'}}{\left|j - j'\right|^{\alpha_q }}
\end{equation}
Let  $$S_\gamma := \frac{1}{N^\gamma} \sum_{\substack{j \in \{ -N, \dots, 0\}, \\  j'\in \{\lceil N^\gamma \rceil, \dots, N \}}} \frac{\lambda_{jj'}}{\left|j - j'\right|^{\alpha_q -1}} + N^{1 - \gamma} \sum_{\substack{ j \leq 0, j' \geq \lceil N^\gamma \rceil \\ j \leq -N \, \mathrm{or} \, j' \geq N}} \frac{\lambda_{jj'}}{\left|j - j'\right|^{\alpha_q}}.$$
Then 
\begin{align} \label{eq:4.9}
L_{S_\gamma}(t)&
= \prod_{\substack{j \in \{ -N, \dots, 0\}, \\  j'\in \{\lceil N^\gamma \rceil, \dots, N \}}} L_\lambda \left( N^{-\gamma} t \left|j - j'\right|^{-(\alpha_q -1)} \right) \times  \prod_{\substack{ j \leq 0, j \geq \lceil N^\gamma \rceil \\ j \leq -N \, \mathrm{or} \, j \geq N}} L_\lambda \left( N^{1 - \gamma} t \left|j - j'\right|^{-\alpha_q} \right) \\
&= \exp \left({-{t}^{q/2} N^{-\gamma q/2} \sum_{\substack{j \in \{ -N, \dots, 0\}, \\  j'\in \{\lceil N^\gamma \rceil, \dots, N \}}} \left|j - j'\right|^{-(\alpha_q -1)q/2}}  - t^{q/2}  N^{(\alpha - 2)} \sum_{\substack{ j \leq 0, j' \geq \lceil N^\gamma \rceil \\ j \leq -N \, \mathrm{or} \, j' \geq N}} \left|j - j'\right|^{-\alpha} \right), \notag
\end{align}
where we used the identities $(1-\gamma)q/2 = \alpha - 2$ and $\alpha_q q/2 = \alpha$ in the last line.
The value of the exponent~$\gamma$ has been chosen in~\eqref{eq:0939gamma} so that there exists a constant $C_{\alpha, q} \in (1 , \infty)$ (independent of $N$) such that
\begin{equation*}
     N^{-\gamma q/2} \sum_{\substack{j \in \{ -N, \dots, 0\}, \\  j'\in \{\lceil N^\gamma \rceil, \dots, N \}}} \left|j - j'\right|^{-(\alpha_q -1)q/2}  +  N^{(\alpha - 2)} \sum_{\substack{ j \leq 0, j' \geq \lceil N^\gamma \rceil \\ j \leq -N \, \mathrm{or} \, j' \geq N}} \left|j - j'\right|^{-\alpha} \leq C_{\alpha, q}.
\end{equation*}
We thus deduce that the random variable $S_\gamma / C_{\alpha, q}^{2/q}$ (multiplied by a suitable non-negative number smaller than $1$) is distributed according to $\mu_q$. In particular, we have the tail estimates: for any $K\geq 1$,
\begin{equation*}
    \mathbf{P}_q\left[ S_\gamma <K \right] \geq 1- C K^{-q/2}.
\end{equation*}
Combining the previous inequality with~\eqref{eq:14270711}, we deduce that
\begin{equation} \label{eq:firstermqSOSlowerbound}
    \mathbf{P}_q \left[ \sum_{j \leq 0 , j' \geq \lceil N^\gamma \rceil} \frac{(n_{jj'} +1) \lambda_{jj'}}{|j - j'|^{\alpha_q}} < K \right] \geq 1 - C K^{-q/2}.
\end{equation}
The term~\eqref{eq:14140711}-(ii) can be estimated using a similar argument: we first use the inequality~\eqref{eq:estnij} and the bound $n_{ij} \leq C N^{1-\gamma}$ to write
\begin{equation*}
    \eqref{eq:14140711}-(ii) \leq C \sum_{k = 1}^{\lceil N^\gamma \rceil - 1}  \left( \sum_{j = \lceil N^\gamma \rceil}^{2\lceil N^\gamma \rceil-1} \frac{ \lambda_{kj}}{|j - k|^{\alpha_q}} + \frac{1}{N^\gamma}\sum_{j = 2\lceil N^\gamma \rceil}^{N-1} \frac{ \lambda_{kj}}{|j - k|^{\alpha_q-1}} + N^{1-\gamma}\sum_{j \geq N} \frac{ \lambda_{kj}}{|j - k|^{\alpha_q}} \right).
\end{equation*}
Denoting by $\tilde S_\gamma$ the random variable on the right-hand side, we may compute its Laplace transform and obtain
\begin{equation*}
    L_{\tilde S_\gamma}(t) = \exp \left(-t^{q/2} \sum_{k = 1}^{\lceil N^\gamma \rceil - 1} \left( \sum_{j = \lceil N^\gamma \rceil}^{2\lceil N^\gamma \rceil-1} |j - k|^{-\alpha}  +   N^{-\frac{\gamma q}{2}} \sum_{j = 2\lceil N^\gamma \rceil}^{N-1} \left|j - k\right|^{- \frac{(\alpha_q -1)q}{2}}  -  N^{(\alpha - 2)} \sum_{j \geq N} \left|j - k\right|^{-\alpha}  \right) \right).
\end{equation*}
The value of the exponent $\gamma$ has been selected so that the term inside the exponential is bounded uniformly in $N$. This implies the following tail estimate on the random variable $\tilde S_\gamma$, for any $K\geq 1$,
\begin{equation*}
    \mathbf{P}_q [\tilde S_\gamma <K] \geq 1- C K^{-q/2},
\end{equation*}
and consequently
\begin{equation*}
\mathbf{P}_q \left[ \sum_{k = 1}^{\lceil N^\gamma \rceil - 1} \sum_{j \geq \lceil N^\gamma \rceil} \frac{(n_{kj} +1) \lambda_{kj}}{|j - k|^{\alpha_q}} < K \right] \geq 1 - C K^{-q/2}.
\end{equation*}
Combining the previous inequality with~\eqref{eq:firstermqSOSlowerbound} completes the proof of~\eqref{eq:1526}.

The end of the proof is essentially identical to the one written in the Gaussian case in Section~\ref{sec:section312} and in the case of the exponent $\alpha > 2 + q/2 $ written in Section~\ref{sec:sec421}: the problem has been reduced to the question of the delocalisation of a nearest neighbour Gaussian chain of length $N^{1-\gamma} = N^{\frac{2}{q}(\alpha - 2)}$ with random conductance satisfying the tail estimate~\eqref{eq:1526} which can be handled by first folding the chain as in Figure~\ref{Delocalphalarger33.picture} and then applying Remark~\ref{rem:partitionfunctionmonotone} as in Section~\ref{sec:sec421}.

\subsubsection{The case $\alpha = 2+{q}/{2}$}

This case is essentially identical to the previous one: the only difference is that we replace the term $N^\gamma$ by the term $(\ln N)^{2/q}$ (N.B. the reason for the exponent $2/q$ is that, when taking the Laplace transform as in~\eqref{eq:4.9}, an exponent $q/2$ will be added to the term $(\ln N)^{-2/q}$ so as to obtain a factor $(\ln N)^{-1}$ which is the correct renormalisation for the sum). We thus omit the technical details.

\subsubsection{The cases $\alpha \in (1,2]$}

This case is the simplest: using the monotonicity of the variance $\mathrm{Var}_{N , \alpha, \beta}^\lambda \left[ \varphi(0) \right]$ in the parameter $N$ (which is a consequence of Proposition~\ref{corollary.identification} even in the presence of arbitrary conductances), we may write
\begin{equation*}
    \mathrm{Var}_{N , \beta , \alpha}^{q-\mathrm{SOS}} \left[ \varphi(0) \right]  \geq \mathbf{E}_q \left[ \mathrm{Var}^\lambda_{N , \beta_q , \alpha_q} \left[ \varphi(0) \right] \right] \geq \mathbf{E}_q \left[ \mathrm{Var}^\lambda_{1 , \beta_q , \alpha_q} \left[ \varphi(0) \right] \right].
\end{equation*}
For the chain of length $1$, the distribution of the height $\varphi(0)$ is an integer-valued Gaussian random variable with conductance equal to $\beta_q \sum_{i \in \Z \setminus \{0\}} \lambda_{0i}/ |i|^{\alpha_q}$ (which is almost surely finite). We thus obtain
\begin{equation*}
 \mathrm{Var}_{N , \beta , \alpha}^{q-\mathrm{SOS}} \left[ \varphi(0) \right]  \geq \mathbf{E}_q \left[ \mathrm{Var}^\lambda_{1 , \beta_q , \alpha_q} \left[ \varphi(0) \right] \right] \geq c \mathbf{E}_q \left[ e^{\beta_q \sum_{i \in \Z \setminus \{0\}} \lambda_{0i}/ |i|^{\alpha_q}} \right] \geq c_\beta.
\end{equation*}

\subsection{The upper bound} \label{sec:upperbound-q-SOS}

This section is devoted to the upper bound of Theorem~\ref{thm.qSOS} and is split into two subsections. We first show that the variance of the height of the $q$-SOS long-range chain can be dominated from above by the one of a Gaussian chain with i.i.d. random conductances (distributed according to the probability distribution $\tilde \mu_q$ defined below) which can then be estimated using the same technique as the one used for the Gaussian chain with range exponent $\alpha \in (1,3)$ presented in Section~\ref{sec.Baumlertechnique}.

\subsubsection{Domination by a Gaussian long-range chain}

\begin{definition}[Random conductances and the measure $\tilde \mu_q$] \label{def.tildmuq}
For $q \in (0,2)$, we denote by $\tilde \mu_q$ the Borel probability measure on $( 0, \infty)$ given by the identity 
\begin{equation*}
    \tilde \mu_q(d\lambda) := \frac{\lambda^{-1/2}}{\int_0^\infty  \lambda^{-1/2} \mu_q(d\lambda)}{\mu_q(d\lambda)}.
\end{equation*}
We let $(\lambda_{ij})_{i , j \in \Z}$ be i.i.d. random variables distributed according to the measure $\tilde \mu_q$. We denote by $ \mathbf{\tilde E}_{q}$ the expectation with respect to the conductances $(\lambda_{ij})_{i , j \in \Z}$.
\end{definition}

We then prove the following inequality showing that the $q$-SOS long-range chain is more localised than a Gaussian random chain with independent conductances distributed according to the measure $\tilde \mu_q$. 

\begin{proposition}[Domination by a Gaussian chain with i.i.d. random conductances] \label{prop.dominationgaussian}
For any integer $N \in \N$, one has the inequality
\begin{equation*} 
    \mathrm{Var}_{N , \beta , \alpha}^{q-\mathrm{SOS}} \left[ \varphi(0) \right] \leq \mathbf{\tilde E}_{q}  \left[ \mathrm{Var}_{N ,\beta_q , \alpha_q}^\lambda \left[ \varphi(0) \right] \right].
\end{equation*}
\end{proposition}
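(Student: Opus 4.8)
The plan is to run the same annealed representation as for the lower bound, then change measure from $\mu_q$ to $\tilde\mu_q$ and apply FKG in the reverse direction. As established in the course of the proof of Proposition~\ref{prop.dominationgaussianlower}, one has the exact identity
\[
  \mathrm{Var}_{N , \beta , \alpha}^{q-\mathrm{SOS}} \left[ \varphi(0) \right]
  = \frac{\mathbf{E}_q\!\left[\, \mathrm{Var}_{N ,\beta_q , \alpha_q}^\lambda \left[ \varphi(0) \right]\, Z_{N ,\beta_q , \alpha_q}^\lambda \,\right]}
         {\mathbf{E}_q\!\left[\, Z_{N ,\beta_q , \alpha_q}^\lambda \,\right]},
\]
where $\mathbf{E}_q$ is the expectation over an i.i.d.\ family of conductances $\lambda=(\lambda_e)_{e}$ with law $\mu_q$; I would realise this on the finite graph $G_N$ of Remark~\ref{rem:partitionfunctionmonotone}, so that this family is finite (the infinitely many boundary conductances being merged into one $\mu_q$-variable per boundary edge exactly as in Section~\ref{sec:sec421}, using the $q/2$-stability of $\mu_q$). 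Setting $Z_q := \int_0^\infty \lambda^{-1/2}\, \mu_q(d\lambda)\in(0,\infty)$, so that $\mu_q(d\lambda) = Z_q\, \lambda^{1/2}\, \tilde\mu_q(d\lambda)$, I replace each $\mu_q(d\lambda_e)$ by $\tilde\mu_q(d\lambda_e)$: this turns every occurrence of $\mathbf{E}_q[\,\cdot\,]$ into $Z_q^{|E|}\,\mathbf{\tilde E}_q[\,\cdot\,\prod_e\lambda_e^{1/2}]$, the constant $Z_q^{|E|}$ cancels between numerator and denominator, and, writing $F(\lambda):=\mathrm{Var}_{N ,\beta_q , \alpha_q}^\lambda[\varphi(0)]$ and $G(\lambda):=Z_{N ,\beta_q , \alpha_q}^\lambda \prod_e \lambda_e^{1/2}$, one is left with
\[
  \mathrm{Var}_{N , \beta , \alpha}^{q-\mathrm{SOS}} \left[ \varphi(0) \right]
  = \frac{\mathbf{\tilde E}_q\!\left[ F(\lambda)\, G(\lambda) \right]}{\mathbf{\tilde E}_q\!\left[ G(\lambda) \right]}.
\]

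From here the statement follows from the FKG inequality for the i.i.d.\ family $\lambda$ under $\tilde\mu_q$: $F$ is decreasing in the conductances by Proposition~\ref{prop:monotonicityconductances}, and (this is the point, see below) $G$ is increasing, so that $\mathbf{\tilde E}_q[FG]\le \mathbf{\tilde E}_q[F]\,\mathbf{\tilde E}_q[G]$, i.e.\ $\mathrm{Var}_{N , \beta , \alpha}^{q-\mathrm{SOS}}[\varphi(0)]\le \mathbf{\tilde E}_q[F(\lambda)] = \mathbf{\tilde E}_q[\mathrm{Var}_{N ,\beta_q , \alpha_q}^\lambda [\varphi(0)]]$, which is the claim. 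Since Proposition~\ref{prop.FKGinequality} is stated for non-negative functions, I would apply it to the bounded increasing function $M-(F\wedge M)$ together with $G\ge 0$ and let $M\to\infty$ by monotone convergence; this is legitimate because $\mathbf{\tilde E}_q[G] = Z_q^{-|E|}\, Z_{N , \beta , \alpha}^{q-\mathrm{SOS}}<\infty$ and $\mathbf{\tilde E}_q[FG]=Z_q^{-|E|}$ times the (finite) numerator above.

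The crux, and the only place where the precise definition of $\tilde\mu_q$ is used, is the monotonicity of $G(\lambda)=Z_{N ,\beta_q , \alpha_q}^\lambda\prod_e\lambda_e^{1/2}$ in each conductance $\lambda_e$. Let $c_e(\lambda)$ be the conductance carried by the edge $e=ij$ of $G_N$, which is a fixed positive multiple of $\lambda_e$. Differentiating the partition function gives $\lambda_e\,\partial_{\lambda_e}Z_{N ,\beta_q , \alpha_q}^\lambda = -\,c_e(\lambda)\,Z_{N ,\beta_q , \alpha_q}^\lambda\, \mathrm{Var}_{N ,\beta_q , \alpha_q}^\lambda[\varphi(i)-\varphi(j)]$ (the mean of $\varphi(i)-\varphi(j)$ being $0$ by the symmetry $\varphi\leftrightarrow-\varphi$), whence
\[
  \partial_{\lambda_e}\!\left( Z_{N ,\beta_q , \alpha_q}^\lambda\, \lambda_e^{1/2}\right)
  = \lambda_e^{-1/2}\, Z_{N ,\beta_q , \alpha_q}^\lambda \left( \tfrac12 - c_e(\lambda)\, \mathrm{Var}_{N ,\beta_q , \alpha_q}^\lambda[\varphi(i)-\varphi(j)] \right),
\]
so that it suffices to show $c_e(\lambda)\,\mathrm{Var}_{N ,\beta_q , \alpha_q}^\lambda[\varphi(i)-\varphi(j)]\le \tfrac12$. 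This I would obtain from the single-variable Gaussian domination bound of Lemma~\ref{lemma:lemma2.2}: the law of $\varphi(i)-\varphi(j)$ does not depend on the choice of root, so one may re-root at $j$; deleting every edge other than $e$ can only increase $\mathrm{Var}[\varphi(i)]$ (Proposition~\ref{corollary.deletion.edges}), and in the remaining graph $\varphi(i)$ is a single discrete Gaussian of conductance $c_e(\lambda)$, whose variance is at most $1/(2c_e(\lambda))$. I expect this last inequality to be the main obstacle; note that the exponent $-1/2$ in $\tilde\mu_q$ is exactly what generates the term $\tfrac12\lambda_e^{-1/2}Z^\lambda$ above, which is in turn matched by the factor $\tfrac12$ in the Gaussian domination bound, so that $G$ is increasing precisely for this tilt.
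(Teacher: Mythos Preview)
Your approach is essentially the paper's: the same change of measure from $\mu_q$ to $\tilde\mu_q$, the same FKG step with $F$ decreasing and $G(\lambda)=Z^\lambda\prod_e\lambda_e^{1/2}$ increasing, and the same proof that $G$ is increasing via Gaussian domination (the paper likewise reduces all conductances other than $\lambda_{kl}$ to zero and invokes Lemma~\ref{lemma:lemma2.2} to get $c_e\,\mathrm{Var}[\varphi(i)-\varphi(j)]\le\tfrac12$).

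The one place where you diverge is in handling the infinitely many boundary conductances. You propose to merge them into a single $\mu_q$-variable per boundary edge via $q/2$-stability \emph{before} changing measure. But $\tilde\mu_q$ is not stable, so after the change of measure the law of your merged boundary conductance $C_i\lambda'_i$ with $\lambda'_i\sim\tilde\mu_q$ is not the law of $\sum_{j}\lambda_{ij}/|i-j|^{\alpha_q}$ with $\lambda_{ij}$ i.i.d.\ $\tilde\mu_q$; consequently the inequality you obtain after FKG is not literally the one stated in the proposition (even if it would suffice for the application downstream). The paper sidesteps this with a truncation instead: set $\lambda^M_{ij}=\lambda_{ij}\mathbf{1}_{\{i,j\in\{-M,\ldots,M\}\}}$, run your argument on the finitely many variables $(\lambda_{ij})_{|i|,|j|\le M}$, and let $M\to\infty$ using that both the left-hand side ratio and $\mathbf{\tilde E}_q[\mathrm{Var}^{\lambda^M}]$ are monotone in $M$. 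Replacing your stability-merging step by this truncation makes the argument match the paper's and proves the proposition as stated.
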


\begin{remark} \label{rem:rem4.6}
    We note that the tail of the law $\tilde \mu_q$ decays faster than the one of the measure $\mu_q$ (both tails decay as a power law, but not with the same exponent). This is the reason why the exponents in the upper and lower bounds of Theorem~\ref{thm.qSOS} are not equal. We further believe that, if the stochastic dominations stated in Propositions~\ref{prop.dominationgaussianlower} and~\ref{prop.dominationgaussian} could be improved so as to obtain dominations from above and below by Gaussian chains with i.i.d. random conductances such that the tails of the laws of the conductances decay at the same rate, then it would be possible to obtain matching exponents in the upper and lower bounds of Theorem~\ref{thm.qSOS}.
\end{remark}

\begin{proof}
We start from the identity
    \begin{align*}
    \mathrm{Var}_{N , \beta , \alpha}^{q-\mathrm{SOS}} \left[ \varphi(0) \right] & =  \frac{\int_{(0 , \infty)^{\Z \times \Z}}  \mathrm{Var}_{N ,\beta_q , \alpha_q}^\lambda \left[ \varphi(0) \right] \times  Z_{N ,\beta_q , \alpha_q}^\lambda \prod_{i,j \in \Z} d \mu_q(\lambda_{ij}) }{\int_{(0 , \infty)^{\Z \times \Z}} Z_{N ,\beta_q , \alpha_q}^\lambda \prod_{i,j \in \Z} d \mu_q(\lambda_{ij})} \\
    & = \frac{\mathbf{E}_q \left[  \mathrm{Var}_{N ,\beta_q , \alpha_q}^\lambda \left[ \varphi(0) \right] \times  Z_{N ,\beta_q , \alpha_q}^\lambda \right]}{\mathbf{E}_q \left[  Z_{N ,\beta_q , \alpha_q}^\lambda \right]}.
    \end{align*}
On a formal level, the strategy is to use the definition of the measure $\tilde \mu_q$ to rewrite the previous identity as follows
\begin{equation*}
    \mathrm{Var}_{N , \beta , \alpha}^{q-\mathrm{SOS}} \left[ \varphi(0) \right] =  \frac{\mathbf{ \tilde E}_q \left[  \mathrm{Var}_{N ,\beta_q , \alpha_q}^\lambda \left[ \varphi(0) \right] \times  (Z_{N ,\beta_q , \alpha_q}^\lambda \times \prod_{i,j\in \Z} \sqrt{\lambda_{ij}}) \right]}{\mathbf{\tilde E}_q \left[  Z_{N ,\beta_q , \alpha_q}^\lambda \times \prod_{i,j \in \Z} \sqrt{\lambda_{ij}} \right]},
\end{equation*}
to show that the function $\lambda \mapsto  Z_{N ,\beta_q , \alpha_q}^\lambda \times \prod_{i,j \in \Z} \sqrt{\lambda_{ij}}$ is increasing in $\lambda$ and to apply the FKG inequality (with one function decreasing and one increasing) to conclude. 

In order to make the previous argument rigorous, one needs to make sense of the infinite product $\prod_{i,j \in \Z} \sqrt{\lambda_{ij}}$. This is achieved below by using an approximation argument.

Let us fix a large integer $M \in \N$ (whose value will be sent to infinity later). Given a collection of conductances $\lambda := (\lambda_{ij})_{i , j \in \Z \times \Z}$, we denote by $\lambda^M := (\lambda^M_{ij})_{i,j \in \Z}$ the collection of conductances defined by the identities $\lambda^M_{ij} = \lambda_{ij}$ if $i , j \in \{- M , \ldots, M\}$ and $\lambda^M_{ij} = 0$ otherwise. We then consider the ratio of expectations
\begin{equation*}
    \frac{\mathbf{E}_q \left[  \mathrm{Var}_{N ,\beta_q , \alpha_q}^{\lambda^M} \left[ \varphi(0) \right] \times  Z_{N ,\beta_q , \alpha_q}^{\lambda^M} \right]}{\mathbf{E}_q \left[  Z_{N ,\beta_q , \alpha_q}^{\lambda^M} \right]}.
\end{equation*}
It is easy to show that this ratio converges to the variance $\mathrm{Var}_{N , \beta , \alpha}^{q-\mathrm{SOS}} \left[ \varphi(0) \right]$ (as all the quantities involved are monotone in $M$). Additionally, since there are now only finitely many random variables involved, we may use the definition of the measure $\tilde \mu_q$ to write
\begin{equation} \label{eq:identityforFKG}
     \frac{\mathbf{E}_q \left[  \mathrm{Var}_{N ,\beta_q , \alpha_q}^{\lambda^M} \left[ \varphi(0) \right] \times  Z_{N ,\beta_q , \alpha_q}^{\lambda^M} \right]}{\mathbf{E}_q \left[  Z_{N ,\beta_q , \alpha_q}^{\lambda^M} \right]} = \frac{\mathbf{\tilde E}_q \left[  \mathrm{Var}_{N ,\beta_q , \alpha_q}^{\lambda^M} \left[ \varphi(0) \right] \times  (Z_{N ,\beta_q , \alpha_q}^{\lambda^M} \times \prod_{i,j\in \{ - M, \ldots, M \}} \sqrt{\lambda_{ij}}) \right]}{\mathbf{\tilde E}_q \left[  Z_{N ,\beta_q , \alpha_q}^{\lambda^M} \times \prod_{i,j\in \{ - M, \ldots, M \}} \sqrt{\lambda_{ij}} \right]}.
\end{equation}
We then show that the function $\lambda := (\lambda_{ij})_{i , j \in \Z} \mapsto Z_{N ,\beta_q , \alpha_q}^{\lambda^M} \times  \prod_{i,j \in \{-M, \ldots, M\}} \sqrt{\lambda_{ij}}$ is increasing and first we note that this property is equivalent to the following statement
\begin{equation} \label{eq:16161809}
    \forall \lambda  \in (0,\infty)^{\Z \times \Z}, ~ \forall k , l \in \{-M , \ldots , M \}, ~~ \frac{\partial }{\partial \lambda_{kl}}  \ln \left( Z_{N ,\beta_q , \alpha_q}^{\lambda^M} \times  \prod_{i,j \in \{-M , \ldots, M\}} \sqrt{\lambda_{ij}} \right) \geq 0.
\end{equation}
To prove~\eqref{eq:16161809}, we fix $\lambda \in (0,\infty)^{\Z \times \Z}$, fix $k , l \in \{- M , \ldots, M\}$ and write
\begin{align} \label{eq:17451809}
    \frac{\partial }{\partial \lambda_{kl}}  \ln \left( Z_{N ,\beta_q , \alpha_q}^{\lambda^M} \times  \prod_{i,j \in \{-M , \ldots, M\}} \sqrt{\lambda_{ij}} \right) & = \frac{\partial }{\partial \lambda_{kl}}  \ln \left( Z_{N ,\beta_q , \alpha_q}^{\lambda^M} \right) +  \frac{\partial }{\partial \lambda_{kl}} \ln \sqrt{\lambda_{kl}} \\
    & = \frac{1}{Z_{N ,\beta_q , \alpha_q}^{\lambda^M}} \times \left( \frac{\partial}{\partial \lambda_{kl}} Z_{N ,\beta_q, \alpha_q}^{\lambda^M} \right) + \frac{1}{2 \lambda_{kl}}. \notag
\end{align}
For the first term on the right-hand side, an explicit computation gives the bound
\begin{equation} \label{eq:17461809}
    \frac{1}{Z_{N ,\beta_q , \alpha_q}^{\lambda^M}} \times \left( \frac{\partial}{\partial \lambda_{kl}} Z_{N ,\beta_q , \alpha_q}^{\lambda^M} \right) = - \frac{\beta_q}{|k - l|^{\alpha_q}} \mathrm{Var}_{N ,\beta_q , \alpha_q}^{\lambda^M} \left[ \varphi(k) - \varphi(l) \right].
\end{equation}
We then reduce the value of all the conductances $\lambda_{ij}$ with $(i , j) \neq (k , l)$ to $0$ and make two observations. First, by Proposition~\ref{prop:monotonicityconductances}, this operation increases that value of the variance of $\varphi(k) - \varphi(l)$. After performing this operation, the law of $\varphi(k) - \varphi(l)$ is an integer-valued Gaussian random variable of conductance $\beta_q \lambda_{kl}/|k - l|^{\alpha_q}$. Applying Lemma~\ref{lemma:lemma2.2}, we have thus obtained
\begin{equation*}
        \mathrm{Var}_{N ,\beta_q , \alpha_q}^{\lambda^M} \left[ \varphi(k) - \varphi(l) \right] \leq \frac{1}{2} \frac{|k - l|^{\alpha_q}}{\beta_q \lambda_{kl}}.
\end{equation*}
Combining the previous inequality with~\eqref{eq:17451809}, we deduce that
\begin{equation*}
    \frac{1}{Z_{N ,\beta_q , \alpha_q}^{\lambda^M}} \times \left( \frac{\partial}{\partial \lambda_{kl}} Z_{N ,\beta_q , \alpha_q}^{\lambda^M} \right) \geq  - \frac{1}{2 \lambda_{kl}},
\end{equation*}
which, by~\eqref{eq:17451809}, implies~\eqref{eq:16161809}. Now that~\eqref{eq:16161809} has been established, we may apply the FKG-inequality to the right-hand side of~\eqref{eq:identityforFKG} and obtain that
\begin{align*}
    \mathrm{Var}_{N , \beta , \alpha}^{q-\mathrm{SOS}} \left[ \varphi(0) \right] & = \lim_{M \to \infty} \frac{\mathbf{E}_q \left[  \mathrm{Var}_{N ,\beta_q , \alpha_q}^{\lambda^M} \left[ \varphi(0) \right] \times  Z_{N ,\beta_q , \alpha_q}^{\lambda^M} \right]}{\mathbf{E}_q \left[  Z_{N ,\beta_q , \alpha_q}^{\lambda^M} \right]} \\
    & \leq \lim_{M \to \infty} \mathbf{\tilde E}_{q}  \left[ \mathrm{Var}_{N ,\beta_q , \alpha_q}^{\lambda^M} \left[ \varphi(0) \right] \right] \\
    & = \mathbf{\tilde E}_{q}  \left[ \mathrm{Var}_{N ,\beta_q , \alpha_q}^{\lambda} \left[ \varphi(0) \right] \right] 
\end{align*}
The proof of Proposition~\ref{prop.dominationgaussian} is complete.
\end{proof}

\subsubsection{Bäumler's technique again}

In the section, we readapt the argument of Section~\ref{sec.Baumlertechnique} to the case of the Gaussian chain with i.i.d. random conductances. The argument below is a minor adaptation of the proof presented there (which is itself a minor adaptation of the proof of~\cite[Proof of Theorem 1.1]{baumler2023recurrence}).

Let us fix a (large) integer $N \in \N$ and let $K$ be the largest integer such that $2^{K+1} \leq N$. As in the proof of Section~\ref{sec.Baumlertechnique}, we introduce the intervals
\begin{equation*}
    I_k := \{ 2^k , \ldots, 2^{k+1} - 1\} ~~\mbox{for}~~ k \in \{ 0 , \ldots, K\}.
\end{equation*}
as well as the collections of paths
\begin{equation*}
    \Gamma := \left\{ \gamma : \{0 , \ldots, K\} \to \N \, : \, \forall k \in \{ 1 , \ldots, K\},~ \gamma_k \in I_k \right\}.
\end{equation*}

We perform the following operations (referring to Figures~\ref{Splittingupperbound.picture} and~\ref{isolatingupperbound.picture} and~\ref{mergingupperbound.picture} for guidance):

\begin{itemize}
    \item For each integer $k \in \{ 0 , \ldots, K -1 \}$ and each pair of vertices $i \in I_k, j \in I_{k+1}$, we split the edge connecting $i$ and $j$ into $\frac{|\Gamma|}{|I_k| |I_{k+1}|}$ edges to which we assign the conductance 
    \begin{equation*}
        c_{i j} := \frac{\beta_q \lambda_{ij} |I_k| |I_{k+1}|}{|\Gamma| |i-j|^{ \alpha_q}} \leq C \lambda_{ij} \frac{2^{(2 - \alpha_q) k}}{|\Gamma|}.
    \end{equation*}
    \item For each path $\gamma \in \Gamma$ and each integer $k \in \{ 0 , \ldots, K \}$, we select an edge connecting the vertices $\gamma_k$ and $\gamma_{k+1}$ and denote this edge by $e_{\gamma, k}$. We choose these edges so that they are all distinct. 
    \item We then isolate the collection of edges $\left\{ e_{\gamma, k} \, : \, k \in \{0 , \ldots, K\} \right\}$ from the rest of the chain and erase the vertices on the line which has been isolated. These operations generate an edge connecting $0$ to $\gamma_K$ (with $\gamma_K > N$) whose conductance is equal to
    \begin{equation*}
        c_\gamma := \frac{1}{\sum_{k = 1}^K \frac{1}{c_{\gamma_k  \gamma_{k+1}}}} \geq  \frac{c }{\left| \Gamma \right| \sum_{k = 1}^K \frac{2^{(\alpha_q-2) k}}{\lambda_{\gamma_k \gamma_{k+1}}}}.
    \end{equation*}
    \item We apply the previous operations to all the paths $\gamma \in \Gamma$, then merge the $\left| \Gamma \right|$ edges together and add their conductance. This generates an edge connecting $0$ to the region $\left\{ n \in \N \, : \, n \geq N \right\}$ whose conductance is given by
\begin{equation*}
    c := \sum_{\gamma \in \Gamma} c_\gamma \geq \frac{c}{\left| \Gamma \right|}  \sum_{\gamma \in \Gamma} \frac{1}{\sum_{k = 1}^K \frac{2^{(\alpha_q-2) k}}{\lambda_{\gamma_k \gamma_{k+1}}}}.
\end{equation*}
We finally reduce to $0$ the values of all the other conductances.
\end{itemize}

All the operations performed until now increase the variance of the height $\varphi(0)$, and they show that this variance is smaller than the one of an integer-valued Gaussian random variable with a conductance equal to the constant $c$ above (whose variance is thus smaller than $1/c$). Specifically, we have the upper bound
\begin{equation*}
    \mathrm{Var}_{N ,\beta_{q} , \alpha_qq}^\lambda \left[ \varphi(0) \right] \leq C \frac{\left| \Gamma \right|}{\sum_{\gamma \in \Gamma} \frac{1}{\sum_{k = 1}^K \frac{2^{(\alpha_q-2) k}}{\lambda_{\gamma_k \gamma_{k+1}}}}}.
\end{equation*}
Using that  the harmonic mean is smaller than the arithmetic mean, we may simplify the previous inequality by writing
\begin{equation*}
    \mathrm{Var}_{N ,\beta_{q} , \alpha_q}^\lambda \left[ \varphi(0) \right] \leq \frac{C}{\left| \Gamma \right|} \sum_{\gamma \in \Gamma} \sum_{k = 1}^K \frac{2^{(\alpha_q-2) k}}{\lambda_{\gamma_k \gamma_{k+1}}}.
\end{equation*}
Taking the expectation, we further deduce that
\begin{align*}
    \mathbf{\tilde E}_{q}  \left[ \mathrm{Var}_{N ,\beta_q , \alpha_q}^\lambda \left[ \varphi(0) \right] \right] & \leq \frac{C}{ \left| \Gamma \right|} \sum_{\gamma \in \Gamma} \sum_{k = 1}^K 2^{(\alpha_q-2) k}  \mathbf{\tilde E}_q \left[ \frac{1}{\lambda_{\gamma_k \gamma_{k+1}}} \right] \\
    & \leq \frac{C}{\left| \Gamma \right|} \sum_{\gamma \in \Gamma} \sum_{k = 1}^K 2^{(\alpha_q-2) k} \\
    & \leq  C \sum_{k = 1}^K 2^{(\alpha_q-2) k} \\
    & \leq \left\{ \begin{aligned}
            C_\beta & ~\mbox{for} ~ \alpha < q, \\
            C_\beta \ln N & ~\mbox{for} ~ \alpha = q, \\
            C_\beta N^{\alpha_q - 2} & ~\mbox{for} ~ \alpha > q,
        \end{aligned} \right.
\end{align*}
where in the first inequality, we used the tail estimate (close to $0$) of the measure $\mu_q$ stated in Remark~\ref{remark2.13} (which implies that a similar tail estimate holds for the measure $\tilde \mu_q$), and where we used the definition of the integer $K$ in the last inequality.

We finally note that we always have (N.B. this is obtained by erasing all the long-range edges as in Section~\ref{sec:casealpha>3})
\begin{equation*}
    \mathbf{\tilde E}_{q}  \left[ \mathrm{Var}_{N ,\beta_q , \alpha_q}^\lambda \left[ \varphi(0) \right] \right] \leq C N.
\end{equation*}
Combining the two previous inequalities completes the proof of Theorem~\ref{thm.qSOS}.\\

\textbf{Acknowledgments.} We thank Christophe Garban for enlightening discussions, constant availability and appealing conjectures on this localisation/delocalisation problems. We thank Aernout van Enter for his encouragements and his feedback on a previous version of the paper.

{\small
\bibliographystyle{abbrv}
\bibliography{quantitative.bib}
}

\end{document}